\theoremstyle{plain}
\newtheorem{theorem}{Theorem}[section]
\newtheorem{corollary}[theorem]{Corollary}
\newtheorem{proposition}[theorem]{Proposition}
\newtheorem{lemma}[theorem]{Lemma}
\newtheorem{remark}[theorem]{Remark}
\numberwithin{theorem}{section}
\numberwithin{equation}{section}
\newcommand{\average}{{\mathchoice {\kern1ex\vcenter{\hrule height.4pt
width 6pt depth0pt} \kern-9.7pt} {\kern1ex\vcenter{\hrule
height.4pt width 4.3pt depth0pt} \kern-7pt} {} {} }}
\def\R{\mathbb{R}}
\renewcommand{\a }{\alpha }
\renewcommand{\b }{\beta }
\renewcommand{\d}{\delta }
\newcommand{\D }{\Delta }
\newcommand{\e }{\varepsilon }
\newcommand{\g }{\gamma}
\newcommand{\G }{\Gamma}
\renewcommand{\l }{\lambda }
\newcommand{\n }{\nabla }
\newcommand{\vp }{\varphi }
\newcommand{\s }{\sigma }
\newcommand{\Sig }{\Sigma}
\renewcommand{\t }{\tau }
\newcommand{\z }{\zeta}
\renewcommand{\th }{\theta }
\renewcommand{\o }{\omega }
\renewcommand{\O }{\Omega }
\newcommand{\ov}{\overline}
\newcommand{\be}{\begin{equation}}
\newcommand{\ee}{\end{equation}}
\newcommand{\de}{\partial}
\newcommand{\ti}{\widetilde}
\renewcommand{\k}{\kappa}
\newcommand{\calH }{\mathcal{H}}
\newcommand{\calC }{\mathcal{C}}
\newcommand{\calD }{\mathcal{D}}
\newcommand{\calQ }{\mathcal{Q}}
\newcommand{\calB }{\mathcal{B}}
\newcommand{\N}{\mathbb{N}}
\newcommand{\cA}{{\mathcal A}}
\newcommand{\cK}{{\mathcal K}}
\newcommand{\calR}{{\mathcal R}}
\renewcommand{\epsilon}{\varepsilon}
\newcommand{\Ds}{ (-\D)^s}
\newcommand{\x}{ \xi}
\begin{document}
 

\author[Mouhamed M. Fall]
{Mouhamed Moustapha Fall}
\address{M.M.F.: African Institute for Mathematical Sciences in Senegal, 
KM 2, Route de Joal, B.P. 14 18. Mbour, S\'en\'egal}
\email{mouhamed.m.fall@aims-senegal.org, mouhamed.m.fall@gmail.com}

\thanks{The author is in debt to Sven  Jarohs and Tobias Weth, for taking his attention to the problems that are studied in this paper. He would like to thank them for useful discussions. The author's work is supported by the Alexander von Humboldt foundation and partially by  DAAD and BMBF (Germany) within the project 57385104.
}

\keywords{Regional  fractional Laplacian, boundary regularity, Censored fractional Laplacians, gradient estimates, H\"older regularity, Censored processes. }
\subjclass[2010]{35R11, 42B37.}

 \begin{abstract}
   \noindent
 We study boundary regularity for solutions   to a class of equations involving the so called regional fractional Lapacians $(-\Delta)^s_\Omega $,  with $\Omega\subset {\mathbb{R}^N}$. Recall that  the regional fractional Laplacians   are  generated by   L\'evy-type  processes which are not allowed to jump outside $\Omega$. We consider  weak   solutions  to the equation   $(-\Delta)^s_\Omega w(x)=p.v.\int_{\Omega}\frac{w(x)-w(y)}{|x-y|^{N+2s}}\, dy=f(x)$, for $s\in (0,1)$ and $\Omega\subset\mathbb{R}^N$, subject to zero Neumann or Dirichlet boundary conditions.  The boundary conditions are defined by considering $w$ as well as the test functions in the fractional Sobolev spaces $H^s(\Omega)$ or $H^s_0(\Omega)$ respectively. 
 While the interior regularity is well understood for these problems, little is known in the boundary regularity, mainly for the Neumann problem. Under optimal regularity assumptions on $\Omega$ and provided $f\in L^p(\Omega)$, we show that  $w\in C^{2s-N/p}(\overline \Omega)$ in the case of  zero Neumann boundary conditions. As a consequence for $2s-N/p>1$,  $w\in C^{1,2s-\frac{N}{p}-1}(\overline{\Omega})$.  As what concerned the Dirichlet problem, we obtain ${w}/{\delta^{2s-1}}\in C^{1-N/p}(\overline\Omega)$, provided $p>N$ and $s\in (1/2,1)$, where $\delta(x)=\textrm{dist}(x,\partial\Omega)$. 
 To prove these results, we first  classify all solutions having a certain growth at infinity when $\O$ is a half-space and the right hand side is zero. We then carry over a fine blow up and some compactness arguments to get the results.   
 \end{abstract}

\title{Regional fractional Laplacians: Boundary regularity}
\maketitle
\section{Introduction}
We consider $\O$ an open subset of $\R^N$ with Lipschitz boundary. The present paper is concerned with boundary regularity of solutions to some equations involving nonlocal operators generated by symmetric stable   processes describing  motions of random particles in a region $\O$ which are only allowed to jump inside  $\O$ but are either reflected in $\O$ or killed when they reach the boundary $\de \O$.
In the Brownian case these phenomenon can be described by the Laplace operator subject to Neumann or Dirichlet boundary conditions. A natural  generalization  in the fractional setting was considered by Bogdan,  Burdzy and  Chen  in  \cite{BBC}, where they constructed      censored symmetric stable processes in $\O$.      The generated infinitesimal operators, denoted by $\Ds_{\O}$, are obtained  by limiting the integral in the fractional Laplacian in the region $\O$.
Recall that the fractional Laplacian of  a function  $w\in C^{2}_c(\R^N)$ is given by
\be \label{eq:def-fracd-lap}
\Ds w(x)= c_{N,s} p.v.\int_{\R^N}\frac{w(x)-w(y)}{|x-y|^{N+2s}}\, dy,
\ee
where $s\in (0,1)$ and  $c_{N,s}=\frac{s4^s\Gamma(\frac{N}{2}+s)}{\pi^{N/2}\Gamma(1-s)}$. The regional fractional Laplacian operator  we are interested in  is defined as
\be \label{eq:first-region-Lapla}
\Ds_{\O} w(x)= c_{N,s}p.v.\int_{\O}\frac{w(x)-w(y)}{|x-y|^{N+2s}}\, dy.
\ee
We mention that  the meaning of boundary conditions  has to be made more precise since in some case, depending on the fractional power $s$, it can be meaningless.   However by considering some natural Sobolev spaces these problems can be defined in a natural manner. Indeed, let  $f\in L^1_{loc}(\R^N)$ and $s\in (0,1)$. We consider functions   $u\in H^s ( \O )$ satisfying
\be\label{eq:first-eq1}
\calC_{s,\O}(u,\vp)=\int_{\O}f(x)\vp(x)\, dx \qquad\textrm{  for all $\vp \in C^1_{c} (\ov \O )$} 
\ee
and $v\in H^s_0(\O) $ satisfying
\be\label{eq:first-eq2}
\calC_{s,\O}(v,\vp)=\int_{\O}f(x)\vp(x)\, dx  \qquad\textrm{  for all $\vp \in C^1_c (\O )$,} 
\ee
where for $u,v\in H^s(\O)$, we define
$$
\calC_{s,\O}(u,v):=  \frac{c_{N,s}}{2}\int_{\O\times \O}\frac{(v(x)-v(y))(u(x)-u(y))}{|x-y|^{N+2s}}\, dx dy.
$$
Here and in the following $H^s(\O)$ denotes the usual fractional Sobolev space and $H^s_{0}(\O) $ is the closure of $C^1_c(\O)$ with respect to the  $H^s(\O)$-norm. The above two problems have an interesting intersection. Indeed, if $\O$ is bounded,  Lipschitz and  $s\in (0,1/2]$ then $H^s(\O)=H^s_0(\O)$, see e.g. \cite[Theorem 1.4.2.4]{Grisv}. However  these two spaces are different for $s>1/2$.   As a consequence  \eqref{eq:first-eq1} and \eqref{eq:first-eq2} are equivalent for $s\in (0,1/2]$.\\
We notice that for $s\in (0,1)$, $f\in L^2(\O)$, with $\int_{\O}f(x)\, dx=0$,  and $\O$ bounded then   \eqref{eq:first-eq1} has a unique solution among the  set of functions $u\in H^s(\O)$ satisfying $\int_{\O}u(x)\, dx=0$.  On other hand, provided $s\in (1/2,1)$ and $\O$ is bounded, the  Dirichlet problem  \eqref{eq:first-eq2} has unique solution  in $H^s_{0}(\O) $ for all $f\in L^2(\O)$.\\
 The Dirichlet forms associated to the two variational  problems  \eqref{eq:first-eq1} and \eqref{eq:first-eq2}  generate some fractional order operators  and are given   by \eqref{eq:first-region-Lapla} when acting on smooth functions.  \\
 Problems involving these   fractional order operators    have been intensively studied in the recent years both in the analytic and probabilistic point of view, \cite{BBC,Guan-Ma,Mou,Guan,AV}.   
  In the recent literature the terminology used for these operators are, respectively, \textit{Reflected or Regional fractional Laplacian} and \textit{Censored fractional Laplacian} \cite{BBC,Guan,BFV}.  Recall that their counterparts in the local case ($s=1$) are, respectively, the Poisson problems with Neumann and Dirichlet boundary conditions. In fact,  thanks to the normalization constant $c_{N,s}$, as $s\to1$  equations  \eqref{eq:first-eq1} and \eqref{eq:first-eq2} tend precisely to the classical Poisson problem with Neumann and Dirichlet boundary conditions, respectively, see also Section \ref{s:Append} below. 
For the variational equation \eqref{eq:first-eq2}, we can write, for $s\in (1/2,1)$,  
\be\label{eq:first-eq-Dir}
\begin{cases}
\Ds_{ \O} v=f& \qquad\textrm{ in $\O$,}\\
v=0 & \qquad\textrm{ on $\de \O$,}
\end{cases} 
\ee
and we note that it makes sense to invoke the trace of $v\in H^s_0(\O)$ on $\de\O$, for $s\in (1/2,1)$,  thanks to the trace theorem, see e.g. \cite{Ding}.\\
In the case of \eqref{eq:first-eq1},    we quote in particular the paper \cite{Guan} which contains useful results and integration by parts formula, where a natural  boundary (Neumann) operator appears to be  $N_s(u)(\s):=\lim_{t\searrow0}\frac{ u(\s+t\nu)-u(\s)}{t^{2s-1}}  $ for $\s\in\de\O$, where $\nu$ is the   unit interior normal vector of  $\O$. \\

The aim of this paper  is to study boundary regularity for solutions $u$ and $v$ to \eqref{eq:first-eq1} and    \eqref{eq:first-eq2}, respectively.
  The interior regularity  of both problems are well understood and can be naturally deduced from the one of the fractional Laplacian. Indeed,  both $u1_{\O}$ and $v1_{\O}$ solve a problem of the form
\be \label{eq:From-reg-tofrac-V}
\Ds w+Vw=f\qquad\textrm{ in $\O$,}
\ee
with $V\in C^\infty(\O)$. On the other hand in the case  of  the Dirichlet   problem, boundary Harnack inequalities and Green function estimates are well studied for $s>1/2$, see \cite{BBC,BFV,CK,Huyuan}. In fact, provided $\O$ is of class $C^{1,1}$ and $f\in L^\infty(\O)$,  it is known that any   solution $v$ to \eqref{eq:first-eq-Dir} satisfies
\be \label{eq:reg-Dir-known}
|v(x) |\leq  C\d(x)^{2s-1}  \quad \textrm{ for $x\in \O$,}
\ee
where $\d(x):= \textrm{dist}(x,\de\O)$. However,    the boundary regularity for problem \eqref{eq:first-eq1} is less understood and the only paper, beside the present one,  dealing with  this appeared few days ago. Indeed,  Audrito, Felipe-Navarro and   Ros-Oton showed recently  in \cite{AFR}  that if $\O $ is of class $C^1$ and $f\in L^p(\O)$, $p>\frac{N}{2s}$,  then  $u\in C^\a(\ov \O)$, for some $\a>0$. Moreover for $s\in (1/2,1)$ and $2s-N/p>2s-1$ they obtain $u\in C^{2s-1+\a}(\ov\O)$, for some $\a>0$. We would like to mention that the authors in  \cite{AFR} considered also  an other   interesting fractional order equation with exterio nonlocal Neumann condition.  \\

 %
We prove in this paper optimal boundary regularity estimates  for  solutions to   \eqref{eq:first-eq1} and \eqref{eq:first-eq2}.  Our result for solutions to \eqref{eq:first-eq1}, improves  those obtained in \cite{AFR}, since under the same assumptions on $f$ and $\O$, we have $u\in C^{ 2s-N/p}(\ov \O)$, if $2s-N/p<1$.  Moreover for $2s>1$, we also obtain H\"older estimates,  up to the boundary, of the gradient   of  both solutions to  \eqref{eq:first-eq1} and \eqref{eq:first-eq2}, provided $2s-N/p>1$.  We  then derive further qualitative properties of $u$ and $v$ for $s>1/2$. Indeed, we show   that the normal derivative of $u$ vanishes on $\de\O$ and  a weighted normal derivative of $v$ is proportional to  ${v}/{\d^{2s-1}  }$ on $\de\O$. This latter fact turns out to be useful in order to obtain monotonicity (in the normal direction) of the  solutions  near the boundary  in the spirit of the Hopf boundary point lemma for classical elliptic equations.
  In the same vain, we improve  \eqref{eq:reg-Dir-known} to a  H\"older  regularity of ${v}/{\d^{2s-1}  }$ up to the boundary.    We mention that in the case of the fractional Laplacian with zero exterior Dirichlet data such type of regularity, of the ratio between the solution and the power of the distance function, has been first obtained by Ros-Oton and Serra \cite{RS-2} followed by \cite{RS16b,RS16a,Grubb1,Fall-reg-1}.  \\

We notice that prior to the recent paper  \cite{AFR} and the present paper, even the continuity  of solutions to \eqref{eq:first-eq1}  up to the boundary  was an open question.\\

%
%

\subsection{Boundary regularity in the Neumann case}
Our first main result for regional (or reflected) fractional Laplacian is the following.

\begin{theorem}\label{th:amin1}
Let $s_0\in (0,1)$, $s\in [s_0,1)$ and $\O$ be an open subset of $\R^N$ of class $C^{1}$  and  $f\in L^p(\O)$, for some $p>\frac{N}{2s_0}$.  Let $u\in H^s_{loc}(\ov \O)\cap L^2(\O) $ be a solution to  \eqref{eq:first-eq1}. Then      for every $\e\in (0, 1)$ and for every $\O'\subset\subset\ov\O$ there exists 	 $C>0$ depending only on $N,s_0,\O,\e,\O' $ and $p$	 such that 
$$
\|u\|_{C^{\min(2s-N/p, 1-\e)}(\O')}\leq C \left(  \|u\|_{L^2(\O)}+ \|f\|_{L^p(\O)}   \right). 
$$ 
\end{theorem}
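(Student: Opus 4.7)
First I would rewrite the equation in a more classical form. Testing \eqref{eq:first-eq1} against $\varphi\in C^1_c(\O)$ one checks that $u1_\O$ satisfies
\[
(-\Delta)^s(u1_\O)(x)+V(x)u(x)=f(x),\qquad x\in\O,
\]
with potential $V(x)=c_{N,s}\int_{\R^N\setminus\O}|x-y|^{-N-2s}\,dy\in C^\infty_{loc}(\O)$. Applying the standard $L^p$-to-$C^\alpha$ regularity for $(-\Delta)^s$ produces the required quantitative bound on any $\O''\subset\subset\O$, so only points close to $\partial\O$ remain to be analysed.

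\textbf{Step 2 (blow-up).} I would then argue by contradiction. If the estimate fails, one extracts sequences $s_k\in[s_0,1)$, $C^1$-domains $\O_k$ of uniform modulus, solutions $u_k$ and data $f_k$ with $\|u_k\|_{L^2(\O_k)}+\|f_k\|_{L^p(\O_k)}\leq 1$ but $\|u_k\|_{C^{\alpha_k}(\overline{\O'_k})}\to\infty$, where $\alpha_k=\min(2s_k-N/p,\,1-\e)$. A Caffarelli-type selection picks $x_k\in\overline{\O'_k}$ and radii $r_k\downarrow 0$ at which a normalized oscillation is attained. After flattening the boundary near $x_k$ by a $C^1$-diffeomorphism $\Phi_k$ aligning the interior normal with $e_N$, I would set
\[
\tilde u_k(z)=r_k^{-\alpha_k}\bigl(u_k(\Phi_k(x_k+r_k z))-c_k\bigr)
\]
for a suitable constant $c_k$. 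The rescaled function then satisfies weakly a regional problem on $D_k=r_k^{-1}(\Phi_k^{-1}(\O_k)-x_k)$ with exponent $s_k$, a right-hand side of order $r_k^{2s_k-\alpha_k-N/p}\to 0$ (which is precisely what forces the sharp threshold $\alpha_k\leq 2s_k-N/p$), and nontrivial normalized Hölder quantity at $0$.

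\textbf{Step 3 (limit equation and Liouville).} The uniform interior estimates from Step 1 applied at scale $1$, combined with the polynomial growth bound $|\tilde u_k(z)|\leq C(1+|z|)^{\alpha_k}$, give local Hölder compactness. Along a subsequence $s_k\to s_\infty\in[s_0,1]$, $D_k$ converges locally to the half-space $\R^N_+$, and $\tilde u_k\to\tilde u_\infty$ locally uniformly. The limit grows strictly slower than $|z|^{2s_\infty}$ and satisfies
\[
\calC_{s_\infty,\R^N_+}(\tilde u_\infty,\varphi)=0\qquad\text{for every }\varphi\in C^1_c(\overline{\R^N_+}),
\]
i.e.\ the homogeneous Neumann problem on the half-space. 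Invoking the Liouville classification on half-spaces announced in the introduction, $\tilde u_\infty$ must be a constant: the only non-constant competitor with admissible growth, $z_N^{2s_\infty-1}$ (when $2s_\infty>1$), violates the Neumann condition since its Neumann quantity $N_{s_\infty}(\cdot)$ is a nonzero constant on $\{z_N=0\}$. This contradicts the positive normalized Hölder quantity of $\tilde u_\infty$ at $0$.

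\textbf{Main obstacle.} I expect the hard part to be the passage to the limit in Step 3: one has to convert local uniform convergence plus the polynomial growth estimate into convergence $\calC_{s_k,D_k}(\tilde u_k,\varphi)\to\calC_{s_\infty,\R^N_+}(\tilde u_\infty,\varphi)$, uniformly in $s_k$ across the full range $[s_0,1)$, while controlling the nonlocal tails on the unbounded varying domains $D_k$ and identifying the correct boundary behaviour of the limit. The Liouville classification itself is the second sensitive ingredient, because when $2s>1$ the Dirichlet candidate $z_N^{2s-1}$ lies inside the admissible growth range and must be excluded via the Neumann datum rather than by growth alone.
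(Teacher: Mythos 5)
Your overall strategy -- flatten the boundary, blow up at a boundary scale, pass to a homogeneous problem on the half-space, and conclude by the Liouville classification -- is exactly the route the paper takes (Lemma \ref{lem:flat-rpoblem}, Proposition \ref{prop:bound-Kato-abstract}, Corollary \ref{eq:cor-reg}, Theorem \ref{th:Liouville}). However, there is a genuine gap in your Step 3: you claim that ``uniform interior estimates applied at scale $1$, combined with the polynomial growth bound, give local H\"older compactness.'' Interior estimates degenerate as one approaches $\de\R^N_+$, so they cannot by themselves produce equicontinuity, or even $L^2$-compactness, \emph{up to} the flat boundary; yet this is precisely what is needed both to pass to the limit in the bilinear form against test functions supported across $\{z_N=0\}$ and to transfer the nondegeneracy of the normalized quantity (which lives at a boundary point) to the limit. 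The paper obtains this compactness from a uniform Caccioppoli-type energy estimate (Lemma \ref{lem:non-loc-caciop} plus the fractional Sobolev inequality), giving $H^{s_n}$-bounds on half-balls touching the boundary, and -- for the pointwise a priori control that underlies the whole scheme -- from the even-reflection construction with the kernel $\ov K$ and the De Giorgi-class argument of Section \ref{s:Apri} (Theorem \ref{th:the-Apri-Hold}). Some substitute for this up-to-the-boundary compactness must be supplied; it is one of the main technical contributions of the paper and cannot be waved through via interior theory.

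Two smaller points. First, since the constant is required to depend only on $s_0$ and not on $s$, your contradiction sequence may have $s_k\to 1$, in which case the limiting form is the Dirichlet energy (after the normalization $c_{N,s_k}=O(1-s_k)$) and one must invoke the classical Liouville theorem; you flag this but do not resolve it (the paper does so in Lemma \ref{lem:nonloc-to-loc}). Second, your description of the Liouville step is slightly off: the Neumann classification on the half-space (Theorem \ref{th:Liouville}) yields constants and, for $2s>1$, tangential affine functions $a+c\cdot x'$; the latter are genuine Neumann-harmonic competitors and are excluded only because your normalized growth exponent is $\leq 1-\e<1$, whereas $x_N^{2s-1}$ is excluded because it fails to be harmonic for $\Ds_{\ov{\R^N_+}}$ (tested up to the boundary), which is the fact the paper records in the proof of Proposition \ref{prop:classif-1D-2s-not-1}. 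Both mechanisms are needed; your write-up conflates them.
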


In the case of higher order regularity, we obtain the 
\begin{theorem}\label{th:amin2}
Let $s\in (1/2,1)$ and $\O$ be an open subset of $\R^N$ of class $C^{1,\b}$, with   $\b>0$. Let  $f\in L^p(\O)$  and  $u\in H^s_{loc}(\ov \O)\cap L^2(\O) $ be a solution to  \eqref{eq:first-eq1}.  If  $2s-N/p> 1$,   then $u\in C^{1,\min(2s-\frac{N}{p}-1,\b) }_{loc}(\ov\O)$ and   for every $\O'\subset\subset\ov\O$   there exists 	 $ C>0$ depending only on $N,s,\O,\b,\O' $ and $p$	 such that 
 $$
\| \n u \|_{C^{\min(2s-\frac{N}{p}-1,\b)}(\O')}\leq C \left(  \|u\|_{L^2(\O)}+ \|f\|_{L^p(\O)}   \right). 
$$
Moreover the normal derivative of $u$ vanishes on $\de\O$. More precisely,  for all $\s\in \de \O$,
\be  \label{eq:vanish-norm-der} 
\frac{\de u}{\de\nu}(\s):=  \lim_{t\searrow 0}\frac{u(\s+t\nu(\s))-u(\s)}{t}=0,
\ee
 where $\nu$ is the unit interior normal vectorfield of $\de\O$.
\end{theorem}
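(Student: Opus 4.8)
The plan is to establish a first‑order (affine) Taylor expansion of $u$ at each boundary point, with the linear part \emph{tangential} to $\de\O$, and then to glue this boundary expansion with the interior $C^{1,\a}$‑estimate for $\Ds$. Put $\a:=\min\!\big(2s-\tfrac{N}{p}-1,\b\big)$, so that $1<1+\a<2s$. The heart of the matter is to prove that there is $C>0$ such that for every $\s\in\de\O$ there exists $a_\s\in\R^N$ with $a_\s\cdot\nu(\s)=0$ for which, writing $\ell_\s(x):=u(\s)+a_\s\cdot(x-\s)$,
\be
\|u-\ell_\s\|_{L^\infty(B_r(\s)\cap\O)}\le C\,r^{1+\a}\big(\|u\|_{L^2(\O)}+\|f\|_{L^p(\O)}\big)
\ee
for all small $r>0$, with $\s\mapsto a_\s$ of class $C^{\a}$. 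Granting this, the asserted estimate follows by the classical device of combining, for an interior point $x_0$ with nearest boundary point $\s_0$, the expansion at $\s_0$ with the interior $C^{1,\a}$‑estimate for $\Ds$ on $B_{\d(x_0)/2}(x_0)$ (the lower‑order term, which arises because $u1_\O$ solves $\Ds w+Vw=f$ with $V\sim\d^{-2s}$, being absorbed using the bound of Theorem \ref{th:amin1}); a covering/rescaling argument then yields $u\in C^{1,\a}_{loc}(\ov\O)$ together with $\n u(\s)=a_\s$ for $\s\in\de\O$. Since $a_\s$ is tangential, \eqref{eq:vanish-norm-der} is immediate.

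The boundary expansion is proved by a contradiction–compactness (blow‑up) argument played against the Liouville‑type classification of entire solutions on the half‑space, which forces any solution of the Neumann problem $\calC_{s,\R^N_+}(w,\vp)=0$, $\vp\in C^1_c(\ov{\R^N_+})$, with growth strictly below $2s$ to be an affine function of the tangential variables only. Fix $\s\in\de\O$; after a rigid motion $\s=0$, $\nu(0)=e_N$, and using a $C^{1,\b}$ diffeomorphism $\Phi$ with $\Phi(0)=0$, $D\Phi(0)=\mathrm{Id}$ flattening $\de\O\cap B_1$, one checks that $\ti u=u\circ\Phi^{-1}$ satisfies, in a localized weak sense against $\vp\in C^1_c(\ov{\R^N_+}\cap B_1)$, an equation $\calC_{s,\R^N_+}(\ti u,\vp)=\int_{\R^N_+}\ti f\,\vp\,dx+\mathcal R_\Phi(\ti u,\vp)$, where $\ti f\in L^p$ and $\mathcal R_\Phi$ is a bilinear error whose kernel is an $\|\Phi-\mathrm{id}\|_{C^{1,\b}(B_1)}$‑small perturbation of $|x-y|^{-N-2s}$, hence of strictly lower order. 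Assuming the expansion fails, there are solutions $u_k$ (normalized so that $\|u_k\|_{L^2}+\|f_k\|_{L^p}\le 1$), boundary points $\s_k$, radii $r_k\to 0$, and minimizers $\ell_k$ over affine‑tangential functions, such that $\eta_k:=\inf_{\ell}\|u_k-\ell\|_{L^\infty(B_{r_k}(\s_k)\cap\O)}$ satisfies $M_k:=r_k^{-1-\a}\eta_k\to\infty$ and, along a subsequence realizing the worst radius up to a fixed factor, $\eta_k\ge\tfrac12\sup_{r\ge r_k}\inf_{\ell}\|u_k-\ell\|_{L^\infty(B_r(\s_k)\cap\O)}$. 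Setting $v_k(x):=\eta_k^{-1}(u_k\circ\Phi_k^{-1}-\ell_k)(\s_k+r_k x)$ on $\O_k:=r_k^{-1}(\Phi_k(\O\cap B_1)-\s_k)$, we get $\inf_{\ell}\|v_k-\ell\|_{L^\infty(B_1\cap\O_k)}=1$; the worst‑radius choice gives $\|v_k\|_{L^\infty(B_R\cap\O_k)}\le C R^{1+\a}$ for $1\le R\le 1/r_k$; $v_k$ solves a rescaled equation whose right‑hand side has $L^p(B_R)$‑norm $\le M_k^{-1}r_k^{2s-N/p-1-\a}\to 0$ (here $2s-\tfrac{N}{p}-1\ge\a$ enters) and whose flattening error is $O(r_k^{\b}\|\Phi_k\|_{C^{1,\b}})\to 0$; finally, the subtracted $\ell_k$ are harmless since $\Ds_{\R^N_+}(c+b\cdot x')=0$ by the tangential symmetry of the half‑space.

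It remains to pass to the limit: Theorem \ref{th:amin1}, applied to $v_k$ on the uniformly $C^1$ domains $\O_k$, gives uniform $C^{1-\e}$‑bounds on compact subsets of $\ov{\R^N_+}$, so along a subsequence $v_k\to v_\infty$ locally uniformly; the uniform tail bound, valid since $1+\a<2s$, lets one pass the nonlocal bilinear forms to the limit, so that $v_\infty\in H^s_{loc}(\ov{\R^N_+})$ satisfies $\calC_{s,\R^N_+}(v_\infty,\vp)=0$ for all $\vp\in C^1_c(\ov{\R^N_+})$, with $\|v_\infty\|_{L^\infty(B_R)}\le C R^{1+\a}$. By the half‑space classification $v_\infty$ is affine‑tangential, whence $\inf_{\ell}\|v_\infty-\ell\|_{L^\infty(B_1\cap\R^N_+)}=0$, contradicting that this infimum equals $1$ in the limit. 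The $C^\a$‑regularity of $\s\mapsto a_\s$ is then read off by comparing the expansions at two nearby boundary points at the scale of their distance. The main obstacle is precisely this "fine blow‑up": one must set up the localized weak formulation so that cutting off and flattening produce only lower‑order errors, show that $\mathcal R_\Phi$ and its rescalings genuinely vanish in the limit (this is where the $C^{1,\b}$‑regularity of $\de\O$, and hence the exponent $\b$, is used), and keep the nonlocal tails uniformly controlled through the rescaling so that $v_\infty$ is a bona fide weak solution of the model half‑space equation; the Liouville‑type classification, the other essential ingredient, is established separately.
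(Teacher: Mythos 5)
Your overall strategy coincides with the paper's: a Campanato-type boundary expansion of $u$ by affine tangential functions, obtained from a contradiction/compactness blow-up played against the half-space Liouville theorem (Theorem \ref{th:Liouville}), followed by interior estimates at scale $\d(x_0)$ and the covering lemma of Ros-Oton and Serra (Lemma \ref{lem:Prop-of-RS}) to glue up to the boundary. The paper implements the expansion in $L^2$ via the projections $P_{v,r}$ onto $\spann\{1,x_1,\dots,x_{N-1}\}$ (Proposition \ref{prop:bound-Kato-abstract-HB1} and Corollary \ref{cor:near-grad-estim}), whereas you minimize over affine tangential functions in $L^\infty$; this is an inessential variant, and your treatment of the flattening error and of the subtracted affine functions (via $\Ds_{\ov{\R^N_+}}x_i=0$) matches the estimates \eqref{eq:estK-prime-n}--\eqref{eq:Pn2}.

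The one step that does not survive scrutiny as written is the gluing: you propose to run the interior $C^{1,\a}$ estimate for $\Ds$ on $B_{\d(x_0)/2}(x_0)$ using the representation $\Ds (u1_\O)+V(u1_\O)=f$ of \eqref{eq:From-reg-tofrac-V} and to ``absorb'' the term $Vu$ via Theorem \ref{th:amin1}. Since $V(x)\sim \d(x)^{-2s}$, after rescaling by $\rho=\d(x_0)/2$ the potential becomes $\rho^{2s}V(\rho\cdot+x_0)=O(1)$, so $\rho^{2s}Vu$ contributes a right-hand side of size $O(\|u\|_{L^\infty})=O(1)$ rather than $O(\rho^{1+\a})$; scaling back then yields only $[\n u]_{C^{\a}(B_{\rho/2}(x_0))}\leq C\rho^{-1-\a}$, which is useless. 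Subtracting the affine function does not help, since the obstruction is $Vu$ itself, not $V(u-\ell)$. This is precisely the ``scaling issue'' the paper flags in the introduction as its reason for not using \eqref{eq:From-reg-tofrac-V}. The fix --- and what the paper does in Corollary \ref{cor:near-grad-estim-Hold} --- is to apply the interior estimate of \cite{Fall-reg-2} directly to the rescaled \emph{regional} equation on the receding domain $\{x_N>-2\}$, where the origin lies at distance $2$ from the boundary, so no singular potential appears and only the right-hand side $\rho^{2s}\ti g$ together with a controlled flattening error $F_\x=O(\rho^{1+\b})$ remain. With that substitution your argument is complete and is the paper's proof.
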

We observe that, Theorem \ref{th:amin1} and Theorem \ref{th:amin2} provide similar regularity properties as the interior regularity of weak solutions to     equations involving nonlocal operators of class $C^\b$,  which states that $u\in C^{1,\min(2s-N/p-1,\b)}_{loc}(\O)$, see \cite{Fall-reg-2}. This is, in fact,  the  case  in the local setting ($s=1$) with zero Neumann boundary condition on a $C^{1,\b}$ domain. 
Such high boundary regularity does not in general hold in the Dirichlet problem. The reason for this in the fractional setting can be seen, intuitively on the one hand, from the fact that the variational problem \eqref{eq:first-eq1} allow for a larger class of test functions. On the other hand, one can look at harmonic functions, with locally finite energy, in the half-space $\O=\{(x',x_N)\in \R^{N-1}\times\R\,:\,x_N>0\}$ with respect to the regional fractional Laplacians and the fractional Laplacian. In the Neumann case, they are affine functions (of the form $a\cdot x'+b$) as long as their pointwise growth is strictly smaller than $2s$. However in the Dirichlet case they are  proportional to $x_N^{2s-1}$, while in the case of the fractional Laplacian with  zero exterior data, they are proportional to $(x_N)^s_+$.   \\

\subsection{Boundary regularity in the Dirichlet case}
We now turn to solutions to the  Censored fractional Laplacian. As mentioned earlier, it is the same as the reflected one in the case $s\in (0,1/2]$, provided $\O$ has Lipschitz boundary and is bounded.  
%
\begin{theorem}\label{th:amin1-Dir}
Let $s_0\in (1/2,1)$,  $s\in [s_0,1)$ and  $\O$ be an open subset of $\R^N$ of class $C^{1}$  and  $f\in L^p(\O)$ for some $p>\frac{N}{2s_0}$.  Let $v\in H^s_{0}(\O) $ be a solution to   \eqref{eq:first-eq2}. Then  for every $\e\in (0,2s_0-1)$ and  $\O'\subset\subset\ov\O$ ,      there exists 	 $C>0$ depending only on $N,s_0,\O,\e,\O'$ and $p$	 such that 
$$
\|v\|_{C^{\min(2s-N/p,2s-1-\e)}(\O')}\leq C \left(  \|v\|_{L^2(\O)}+ \|f\|_{L^p(\O)}   \right).
$$
\end{theorem}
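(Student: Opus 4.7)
The plan is to combine interior regularity for $v$, which follows by writing $v\cdot 1_{\O}$ as a solution of $\Ds w+Vw=f$ with $V\in C^\infty(\O)$ (as noted in the introduction, cf.\ \eqref{eq:From-reg-tofrac-V}), with a boundary regularity estimate obtained at each $x_0\in\partial\O$ by a blow-up and compactness argument. A standard covering then patches the interior and boundary estimates into the $\O'\subset\subset\ov\O$ conclusion. Set $\alpha:=\min(2s-N/p,\,2s-1-\epsilon)$; since $s<1$ we have $\alpha<1$, so $C^\alpha$ is a pure Hölder seminorm requiring no polynomial subtraction.

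I would argue by contradiction at the boundary. Assuming the estimate fails along sequences $s_k\in[s_0,1)$, $f_k\in L^p(\O)$, solutions $v_k\in H^s_0(\O)$ to \eqref{eq:first-eq2}, points $x_k\in\ov\O$ converging to $x_\infty\in\partial\O$, and scales $r_k\searrow 0$ along which the $C^\alpha$-excess at scale $r_k$ blows up relative to $\|v_k\|_{L^2(\O)}+\|f_k\|_{L^p(\O)}$, I would straighten $\partial\O$ near $x_k$ by a $C^1$-diffeomorphism $\Phi_k$ and rescale
\[
\tilde v_k(y) := \frac{v_k(\Phi_k^{-1}(x_k+r_k y))}{\Theta_k\, r_k^\alpha},
\]
where $\Theta_k$ realizes the excess. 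The rescalings live on half-balls $B_{R_k}^+$ with $R_k\to\infty$, are normalized to have unit $C^\alpha$-excess at scale $1$, and vanish on the flattened boundary.

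The next step is to pass to the limit. The rescaled right-hand side decays in $L^p$ because $r_k^{2s-N/p-\alpha}\to 0$ (by $\alpha\le 2s-N/p$), and the pull-back of $\Ds_\O$ under $\Phi_k$ converges to $\Ds_{\R^N_+}$ up to a kernel perturbation of lower order. Uniform interior Hölder regularity of $\tilde v_k$ with some small exponent, obtained by iterating the preliminary estimate of Audrito--Felipe-Navarro--Ros-Oton \cite{AFR} at every intermediate scale, gives local $C^0$ compactness. The limit $v_\infty$ lies in $H^s_{loc}(\ov{\R^N_+})$, solves the homogeneous censored fractional Laplacian on $\R^N_+$ with zero Dirichlet trace on $\{y_N=0\}$, and has growth at infinity bounded by $|y|^\alpha$. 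By the half-space classification announced in the abstract, every such solution in the $H^s_0$-framework is a constant multiple of $y_N^{2s-1}$; the strict inequality $\alpha\le 2s-1-\epsilon<2s-1$ combined with the growth control forces the constant to vanish, so $v_\infty\equiv 0$, contradicting the unit-scale normalization of the excess.

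The hard part will be the half-space Liouville classification for the censored operator in the $H^s_0$-framework with polynomial growth: it is precisely this rigidity that permits the strict $\epsilon$-buffer to rule out the borderline profile $y_N^{2s-1}$. A subsidiary technical difficulty is that, because $\O$ is merely $C^1$, the flattening $\Phi_k$ is $C^1$ but no better, so the transformed regional kernel acquires an error that must be shown to vanish uniformly in $k$ in the limit; this $C^1$ flattening error is another source of the $\epsilon$-loss in the exponent. Finally, one must verify that the trace of $v_\infty$ on $\{y_N=0\}$ is genuinely zero, which uses the trace theorem from \cite{Ding} (available precisely in the regime $s>1/2$) together with stability of the zero Dirichlet condition under rescaling.
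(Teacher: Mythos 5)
Your outline reproduces the architecture of the paper's proof: Lemma \ref{lem:flat-rpoblem} flattens the boundary and reduces to the kernel $K^\psi_s$ on $\R^N_+$, Proposition \ref{prop:bound-Kato-abstract-Dir} runs exactly the contradiction/blow-up you describe with the Campanato-type excess $\Theta_n(r)$, the limit profile is classified by the half-space Liouville result (Lemma \ref{lem:Liouville-Dirichlet}, built on Proposition \ref{prop:classif-1D-2s-not-1}-$(ii)$) as $a(x_N)_+^{2s-1}$, and the strict gap $\g\leq 2s-1-\e<2s-1$ forces $a=0$ against the unit normalization --- this is precisely the role of $\e$ in the paper as well. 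Corollary \ref{cor:C-gam-reg-Dir} then converts the Campanato bound into the H\"older estimate by interior regularity at scale $\d(x)/2$ plus Lemma \ref{lem:Prop-of-RS}.

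The one step that does not work as written is your compactness mechanism. You propose to extract the blow-up limit from a uniform small-exponent H\"older estimate obtained by ``iterating the preliminary estimate of \cite{AFR}''; but \cite{AFR} treats the Neumann problem \eqref{eq:first-eq1}, whose variational formulation uses the larger test space $C^1_c(\ov\O)$, and it gives no boundary modulus of continuity for censored (Dirichlet) solutions --- indeed a uniform boundary H\"older estimate for the rescaled $\tilde v_k$ is essentially the statement you are trying to prove, so this is circular unless you substitute a genuinely weaker a priori input. The paper avoids this entirely: it only needs $L^2_{loc}$ (and $H^\s_{loc}$, $\s<s$) convergence to pass to the limit in the bilinear form $\calD_{K_n}$, and it obtains the requisite uniform local $H^{s}$ bounds on the rescaled functions from the nonlocal Caccioppoli inequality (Lemma \ref{lem:non-loc-caciop}) combined with the fractional Sobolev inequality, exactly as in the proof of Proposition \ref{prop:bound-Kato-abstract}. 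Two smaller remarks: the $C^1$ flattening error is not a source of the $\e$-loss --- after rescaling, $\n\ti\psi_n(x)=\n\psi_n(r_nx)\to0$ and the kernel perturbation \eqref{eq:K-near-flat-not} vanishes in the limit, so the only reason for $\e$ is the borderline profile $(x_N)_+^{2s-1}$; and no trace theorem is needed to see that the limit vanishes on $\R^N\setminus\R^N_+$, since the rescaled functions lie in $\calH^s_0(\R^N_+)$ and hence vanish pointwise there, a property preserved under $L^2_{loc}(\R^N)$ convergence.
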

In the case of higher order regularity we obtain the
\begin{theorem}\label{th:amin3}
Let $s\in (1/2,1)$ and $\O$ be an open subset of $\R^N$ of class $C^{1,\b}$, $\b>0$ and  $\O'\subset\subset\ov\O$. Let  $f\in L^p(\O)$  and  $v\in H^s_0(\O)$ be a solution to  \eqref{eq:first-eq2}. Then     
\begin{itemize}
\item[(i)] if $2s-N/p> 2s-1$,   there exists 	 $ C>0$ depending only on $N,s,\O,\b,\O' $ and $p$	 such that 
\be\label{eq:iiii1} 
\|v\|_{C^{2s-1}(\O')}+\|  v/\d^{2s-1}\|_{C^{\min(1-N/p,\b)}(\O')}\leq C \left(  \|v\|_{L^2(\O)}+ \|f\|_{L^p(\O)}   \right)
\ee
\item[(ii)] if $2s-N/p> 1$ and $\b>2s-1$,   there exists 	 $C>0$ depending only on $N,s,\O,\b,\O' $ and $p$	 such that 
\be\label{eq:iiii2}  
\|\d^{2-2s}\n v \|_{C^{2s-\frac{N}{p}-1}(\O')}\leq C \left(  \|v\|_{L^2(\O)}+ \|f\|_{L^p(\O)}   \right)
\ee
and 
\be \label{eq:weihted-grad-estim}
\d^{2-2s}\n v =(2s-1) \frac{v}{\d^{2s-1} } \nu \qquad\textrm{ on $\de\O$,}
\ee
where $\nu$ is the interior normal vectorfield of $\de\O$ and  $\d$ coincides with     $\textrm{dist}(x,\de\O)$ in a neighbourhood of $\de\O$, Lipscitz continuous  and positive in the  complement, in $\O$,  of this neighbourhood. 
\end{itemize}
\end{theorem}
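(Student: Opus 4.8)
The plan is to follow the same blow-up and compactness scheme that underlies Theorems~\ref{th:amin1}--\ref{th:amin1-Dir}, but now applied to the rescaled quotient $v/\d^{2s-1}$ rather than to $v$ itself, using the half-space classification as the rigidity input. First, by the results already established (Theorem~\ref{th:amin1-Dir} together with the known bound \eqref{eq:reg-Dir-known}), we know $v\in C^{2s-1}(\ov\O)$ with the stated norm control, and in particular $v/\d^{2s-1}$ is a bounded function near $\de\O$. Flattening the boundary locally via a $C^{1,\b}$ diffeomorphism, the equation \eqref{eq:first-eq2} becomes, on a half-ball, a variational problem for the regional operator perturbed by a kernel of order $C^\b$ and a zero-order potential $V\in C^\b$, with right-hand side in $L^p$. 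One then sets $w = \d^{2-2s}\n v$ componentwise (equivalently works with $v/\d^{2s-1}$ and differentiates) and argues by contradiction: if the claimed $C^{2s-N/p-1}$ (resp.\ $C^{\min(1-N/p,\b)}$) seminorm of the quotient on $\O'$ were not controlled by the right-hand side, one extracts a sequence of solutions $v_k$, centers $x_k\to x_0\in\de\O$ (after the usual dichotomy between interior points, where the interior theory of \cite{Fall-reg-2} applies, and boundary points), and radii $r_k\to 0$, and rescales
\[
\tilde v_k(x) = \frac{v_k(x_k + r_k x) - P_k(x)}{r_k^{\,2s-1+\gamma}\,\Theta_k},
\]
where $P_k$ is the suitable polynomial/boundary-profile subtraction (a multiple of $\d^{2s-1}$ plus, in case (ii), a first-order correction), $\gamma$ is the target exponent, and $\Theta_k$ is the normalizing seminorm which is assumed to blow up.

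The heart of the argument is the \emph{compactness + Liouville} step. Using the growth control coming from the assumed boundedness of lower-order seminorms (a standard ``$\Theta_k$ is the worst scale'' normalization, as in \cite{Fall-reg-1,RS-2}), one shows the $\tilde v_k$ are locally uniformly bounded with uniform $H^s_{\loc}$ estimates, hence converge (along a subsequence) to a limit $\tilde v_\infty$ defined on all of $\R^N$ or on a half-space $\R^N_+$. In the limit the perturbation kernel and the potential vanish and the right-hand side disappears (here $p>N$, so $2s-N/p-1$ and $1-N/p$ are genuinely positive and the $L^p$ term scales away), so $\tilde v_\infty$ is a solution of the pure regional Dirichlet problem in the half-space with a prescribed growth bound of order $2s-1+\gamma < 2s$ (in case (i)) or with controlled growth after subtracting the profile (in case (ii)). This is exactly the regime covered by the half-space classification announced in the abstract: the only such solutions are multiples of $x_N^{2s-1}$ (plus, in case (ii), the first-order terms $x_N^{2s-1}(a\cdot x')$ arising from differentiating along the boundary). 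But the subtraction of $P_k$ together with the nondegeneracy of the $\Theta_k$ normalization forces $\tilde v_\infty$ to have vanishing profile coefficient and vanishing seminorm at $0$, while the normalization also forces that seminorm to be $1$ — a contradiction. This yields \eqref{eq:iiii1} and \eqref{eq:iiii2}.

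For the boundary identity \eqref{eq:weihted-grad-estim}, once $\d^{2-2s}\n v$ is known to be $C^{2s-N/p-1}$ up to $\de\O$, one reads off its boundary trace from the local blow-up: near $\s\in\de\O$ the leading behavior of $v$ is $\frac{v(\s)}{\d^{2s-1}}\,\d(x)^{2s-1}$ by \eqref{eq:iiii1}, and differentiating $\d^{2s-1}$ in the normal direction produces the factor $(2s-1)\d^{2-2s}$, so $\d^{2-2s}\partial_\nu v \to (2s-1)\,v/\d^{2s-1}$ while the tangential components of $\d^{2-2s}\n v$ vanish on $\de\O$ because $\d$ is constant (zero) along $\de\O$ to the relevant order; the hypothesis $\b>2s-1$ guarantees the boundary is smooth enough that the extended $\d$ (equal to $\dist(x,\de\O)$ near $\de\O$, Lipschitz and positive elsewhere) does not spoil this. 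The main obstacle is the compactness step in the half-space: one must verify that the assumed control on lower-order seminorms genuinely gives the growth estimate needed to pass to a \emph{global} limit of the prescribed order, and that the degenerate weight $\d^{2-2s}$ (blowing up at $\de\O$) is compatible with the uniform $H^s_{\loc}$ bounds — this is where the $C^{1,\b}$ regularity of $\O$ and the careful choice of the cutoff/subtraction $P_k$ are essential, and where the Dirichlet case genuinely differs from the Neumann Theorems~\ref{th:amin1}--\ref{th:amin2}.
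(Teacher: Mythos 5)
Your overall skeleton (flatten the boundary, subtract a boundary profile, blow up with a Serra-type worst-scale normalization, invoke the half-space Liouville classification, then read off the boundary trace) is the same as the paper's, which runs through Lemma \ref{lem:flat-rpoblem}, Proposition \ref{prop:bound-Kato-abstract-HB}, Corollary \ref{eq:cor-reg-BR1} and Corollary \ref{cor:u-odr-ds}. However, there is a genuine gap in the concrete execution. You propose to run the blow-up directly on $v/\d^{2s-1}$ or on $w=\d^{2-2s}\n v$ and to normalize by the H\"older seminorm of that quotient. Neither of these functions satisfies a tractable equation: the regional operator does not commute with multiplication by $\d^{2-2s}$, so the compactness step as you describe it has no equation to pass to the limit in, and the "vanishing profile coefficient" contradiction cannot be closed. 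The paper avoids this entirely by blowing up on $v$ itself minus its $L^2(B_r)$-projection $Q_{v,r}=q_v(r)(x_N)_+^{2s-1}$ onto the span of $(x_N)_+^{2s-1}$; since $(-\Delta)^s_{\R^N_+}(x_N)_+^{2s-1}=0$, the subtracted function still solves the equation up to a kernel-perturbation term of size $O(r^\b)$ coming from the flattening, which is exactly what makes the limit equation clean and Lemma \ref{lem:Liouville-Dirichlet} applicable. The output of that blow-up is a Campanato-type decay $\|v-Q(x_N)_+^{2s-1}\|_{L^2(B_r)}\leq Cr^{N/2+\g}$, not yet a H\"older bound on the quotient.

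The second missing piece is the mechanism converting this decay into \eqref{eq:iiii1}--\eqref{eq:weihted-grad-estim}. The paper rescales to balls $B_{\rho}(\x)$ with $\rho\sim\x_N$, applies the \emph{interior} $C^{2s-1}$, $C^{\g-(2s-1)}$ and $C^{1,\s}$ estimates of \cite{Fall-reg-1,Fall-reg-2} to $u_\x(y)=v(\x+\rho y)-Q((\x+\rho y)\cdot e_N)_+^{2s-1}$ (carefully estimating the nonlocal tail and the terms generated by the frozen-kernel error), and only then divides by $y_N^{2s-1}$ or multiplies $\n u_\x$ by $y_N^{2-2s}$; the resulting estimates on the balls $B_{\x_N/2}(\x)$ are patched into estimates up to $\de\O$ via Lemma \ref{lem:Prop-of-RS}. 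Your sketch mentions the interior/boundary dichotomy but does not identify this intermediate-scale step, which is where the degenerate weight is actually handled and where the hypotheses $2s-N/p>1$ and $\b>2s-1$ in part (ii) enter. Your derivation of \eqref{eq:weihted-grad-estim} from the leading-order expansion is the right idea and matches the paper's conclusion that $y_N^{2-2s}\n\left(v-Qy_N^{2s-1}\right)$ vanishes at the boundary point, but it only becomes rigorous once the preceding two steps are in place.
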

We recall, in Theorem \ref{th:amin3},  that for $\s\in \de\O$, we define $\frac{v}{\d^{2s-1} }(\s)=\lim_{\stackrel{x\to \s}{x\in\O}} \frac{v}{\d^{2s-1} }(x)$ and $(\d^{2-2s}\n v)(\s)= \lim_{\stackrel{x\to \s}{x\in\O}} \d^{2-2s}(x)\n v(x) $. Next,  we observe that if $\O$ is of class $C^{1,1}$ then the extension of $\d$ to  the signed distance function to  $\de\O$ is    of class $C^{1,1}$ in a neighbourhood of $\de\O$ and its gradient  coincides with the unit interior normal  $\nu$   on $\de\O$. Therefore \eqref{eq:iiii1}, \eqref{eq:iiii2}  and \eqref{eq:weihted-grad-estim} imply, for $2s-N/p> 1$,  that 
$$
|\n \left( v/\d^{2s-1} \right)|\leq C \d ^{2s-\frac{N}{p}-2}\left(  \|v\|_{L^2(\O)}+ \|f\|_{L^p(\O)}   \right) \qquad\textrm{ in $\O$.}
$$
We notice that  Theorem \ref{th:amin3}-$(i)$, in the case of the fractional Laplacian with zero exterior data, was first obtained by Grubb in \cite{Grubb1} in the case of $C^\infty$ domains and in \cite{Fall-reg-1} in the case of $C^{1,\b}$ domains and even with a larger class of right hand sides containing $L^p(\O)$. Now   \eqref{eq:iiii2} and \eqref{eq:weihted-grad-estim}  was recently obtained for the fractional Laplacian in \cite{FS}.\\

We point out that the  above results in the present paper provide new insights to the Poisson problem involving  these regional nonlocal  operators  and, as such, we believe that they  might   motivate the study of several problems involving these operators in the spirit  of the fractional Laplacian.\\

\begin{remark}
We  note that starting  with $2s_0-N/p>1$,   the constant $C$ in  Theorem \ref{th:amin2} and Theorem \ref{th:amin3}-$(i)$ can be chosen to  remain bounded as $s\to 1$, while for Theorem \ref{th:amin3}-$(ii)$, we need to assume that $\b\geq1$. We refer to Remark \ref{rem:const-s} below for more details.
\end{remark}
To prove these results, we first make a full  classification of all solutions with  growth smaller than $2s$ when $\O$ is a half-space and $f\equiv0$. We then carry out a blow up and some compactness arguments to obtain H\"older continuity up to the boundary.    We note that the classification of the solutions on the half-space was left as a challenging  open problem in \cite{AFR}. To achieve it, we use several ingredients of independent interest, including some reflection principles, Hardy-type inequalities and the Caffarelli-Silvestre extension \cite{CSilv}.  The blow up argument is  partly inspired  by the work of Serra in \cite{Serra} and \cite{Fall-reg-2}, where we study optimal regularity estimates involving a class of  nonlocal operators extending the classical operators in divergence form, see also \cite{RS16a, Fall-reg-1} for other uses.\\
The main difficulties reside on the fact that the operators depend  on the domain. We use local parameterization that flatten the boundary which allows to work on a fixed domain during the blow up process. However, in this case the resulting operator falls in a class of nonlocal operators in divergence form (see e.g. \cite{Fall-reg-2}). To deal with these nontranslation invariant kernels, we use a freezing argument of the kernels in the spirit of \cite{Fall-reg-2}. We also mention that, except in dimension 1 where we have the exact expression of the potential $V$ in  \eqref{eq:From-reg-tofrac-V}, we do not use     \eqref{eq:From-reg-tofrac-V} in our argument because of some scaling issues. Recall that $V(x)\sim \d(x)^{-2s}$ and thus scales as $\Ds$.

The paper is organized as follows. Section \ref{s:Apri} contains the a priori estimates in the half-space. The Liouville-type results are proven in Section \ref{s:Liouville}. The blow up argument leading to    boundary regularity on curved domains for the regional and the censored   fractional Laplacians are given in Section \ref{s:Reg-Neum} and Section \ref{s:Reg-Dir}, respectively. We finally collect the proofs of the main results in Section \ref{s:poof-MR}.\\

\noindent
\textbf{Note added in proof.} In the  recent paper \cite{Fall-Ros-Oton},  Ros-Oton  and the author have  proven the Schauder  estimates  for problems  \eqref{eq:first-eq1} and \eqref{eq:first-eq2}.
 
\section{Notations}\label{s:NP}
 For $E$ a Lipschitz  open subset of $\R^N$ and a   weight   function $a$,  we use the standard notations for weighted
 Lebesgue spaces $L^p(E; a(x))=\{u:E\to \R^M\,:\,\int_{E}|u(x)|^p a(x)dx<\infty \}.$ See e.g. \cite{Grisv},    the fractional Sobolev space $H^{s}(E)$
is  given by the set   of measurable functions $u$  such that  
$$
\|u\|_{H^{s}(E) }^2:=\|u\|_{L^2(E)}^2+[u]^2_{H^s(E)}<\infty,    
$$
where  $[u]^2_{H^s(E)}:=\int_E\int_E\frac{|u(x)-u(y)|^2}{|x-y|^{N+2s}}dxdy<\infty $.
We denote by  $H^{s}_0(E) $  the closure of $ C^\infty_c(E)$ with respect to the norm $\|\cdot\|_{H^{s}(E) } $. We say that $u\in H^s_{loc}(E)$ (resp. $H^s_{loc}(\ov E)$) if for all $\vp\in C^1_c(E)$ (resp. $\vp \in C^1_c(\ov E)$), we have  $\vp u\in H^s(E)$. We define the Hilbert space
$$
\calH^s_0(E)=\{ u\in H^s(\R^N)\,:\,  \quad\textrm{ $u=0$ on $\R^N\setminus E$}\},  
$$
endowed with the norm $\| u\|_{H^s(\R^N)}$.
An interesting characterization of $\calH^s_0(E) $, see  \cite{Grisv},  is that  
\be \label{eq:def-calHs0}
\calH^s_0 (E)=\left\{u\in H^{s}_0(E)\,:\, \int_{E}\frac{u^2(x)}{\d^{2s}_E(x)}dx<\infty, \quad\textrm{ $u=0$ on $\R^N\setminus E$}\right\},
\ee
where $\d_E(x):=\textrm{dist}(x,\de E)$.\\
We denote by $B_r(x_0)$, the ball  of $\R^N$ centred at $x_0$ with radius $r$ and $B_r=B_r(0)$. Moreover  $B_r'(z)=\de\R^N_+\cap B_r(z)$ for $z\in \de\R^N_+$ and $B_r'=B_r'(0)$. Moreover  $B_r^+(z)=B_r(z)\cap \R^N_+$.\\
For  the   H\"older and Lipschitz seminorm of $u$, we write
$$
[u]_{C^{0,\a}(\O)}:=\sup_{x\not=y\in\O} \frac{|u(x)-u(y)|}{|x-y|^\a},
$$ 
for $\a\in (0,1]$. If there is no ambiguity, when   $\a\in (0,1)$, we will write $[u]_{C^{\a}(\O)}$ instead of $[u]_{C^{0,\a}(\O)}$. 
If $m\in \N$ and $\a\in (0,1)$, the H\"older space $\|u\|_{C^{m,\a}(\O)}$  is given by the set of functions in $C^m(\O)$ such that 
$$
\|u\|_{C^{m+\a}(\O)}:=\|u\|_{C^{m,\a}(\O)}=  \sup_{\g\in \N^N, |\g|\leq  m}   \| \de^\g  u\|_{ L^{\infty}(\O)}+ \sup_{\g\in \N^N, |\g|= m}  \|\de^\g u\|_{ C^{\a}(\O)}<\infty  .
$$
Recall that for $u,v\in H^s(\O)$, 
$$
\calC_{s,\O}(u,v):=  \frac{c_{N,s}}{2}\int_{\O\times \O}\frac{(v(x)-v(y))(u(x)-u(y))}{|x-y|^{N+2s}}\, dx dy.
$$
We note that for $u\in H^s_{loc}(\O)\cap L^1(\O;\frac{1}{1+|x|^{N+2s}})$ and  $v\in H^s(\O)$ with compact support in $\O$ then $ |\calC_{s,\O}(u,v)|<\infty$.
For $G_1$ and $G_2$  subsets of $\R^N$, with ${G_2} \subseteq G_1$ and $f\in L^1_{loc}(G_2)$, we say that  $u\in H^s_{loc}({G_1}) \cap L^1(G_1;\frac{1}{1+|x|^{N+2s}})$ satisfies
$$
\Ds_{ {G_1}} u= f\qquad \textrm{ in  $G_2$}
$$
if  $\calC_{s,G_1}(u,v)=\int_{  G_2}f v\, dx$ for all $v\in C^1_c(G_2\cup ( {G_1}\cap \de G_2  ))$.
\section{A priori H\"older  regularity estimate on the half-space}\label{s:Apri}
We introduce the reflection function with respect to the $e_N$ direction given by 
$$
\calR: \R^N\to \R^N, \qquad \calR(x)= x-2e_N\cdot x= (x',-x_N).
$$
Here and in the following, we define
$$
\calC_{e}:=\{\vp \in C^\infty_c(\R^N)\,:\, \vp (x)=\vp(\calR(x)), \qquad\textrm{ for all $x\in \R^N$ }\} 
$$
and
$$
H^s_{loc,e}(\R^N):= \{u \in H^s_{loc}(\R^N)\,:\, u (x',x_N)=u(\calR(x)), \qquad\textrm{ for all $x\in \R^N$}\} . 
$$
Moreover for $u\in H^s_{loc}(\ov{\R^N_+})$, we define
\begin{align}\label{eq:reflex-u}
\ov u(x',x_N):=
\begin{cases}
u(x',x_N)& \qquad\textrm{ for $x_N\geq 0$,}\\
u(x',-x_N)& \qquad\textrm{ for $x_N< 0$.}
\end{cases}
\end{align}

 We define the kernels $\ov K, K^+_s:\R^N\times \R^N\to [0,\infty]$ by 
 \be\label{eq:def-K_plus}
K^+_s(x,y):= c_{N,s} 1_{\R^N_+\times\R^N_+}(x,y) |x-y|^{-N-2s} ,
 \ee
\begin{align}\label{eq:Kernel-for-reflex}
\ov{K}(x,y):= 1_{x_N y_N> 0}(x,y) \frac{c_{N,s} }{2|x-y|^{N+2s}}+  1_{x_N y_N<0}(x,y)\frac{c_{N,s} }{2( |x'-y'|^2+(x_N+y_N)^2)^{\frac{N+2s}{2}}}.
\end{align}
We observe that $\ov K$ is an even extension of the kernel $K^+_s$ with respect to the $e_N$ direction. On the other hand,
the  kernel $\ov K$  is \textit{not}  strongly elliptic on  $\R^{N}\times \R^N$ (i.e. it is not bounded from above and below by $|x-y|^{-N-2s}$) but has some nice properties.
\begin{enumerate}
\item[(i)] For every $x,y\in \R^N$, 
\be \label{eq:ovK-sym}
\ov K(x,y)=\ov K(y,x)=\ov K(\calR(x), y) .
\ee
\item[(ii)] Since $(x_N+y_N)^2\leq (x_N-y_N)^2$ for $x_Ny_N\leq 0$, we have that  
\be\label{eq:ovK-coerciv}
\ov K(x,y) \geq \frac{c_{N,s}}{2|x-y|^{N+2s}} \qquad\textrm{  for all $x,y\in \R^N$.}
\ee
\item[(iii)] For a function $\eta:\R^N\times \R^N$ and measurable sets $A,B\subset\R^N$, we have that 
\begin{align} \label{eq:change-inK}
&\int_{A\times B}\eta(x,y)\ov K(x,y)\, dxdy \nonumber\\
& = \int_{A\times B} {\eta(x,y)}K^+_s(x,y) \, dxdy  +  \int_{\calR(A)\times \calR(B)}{\eta(\calR(x),\calR(y))}{K^+_s(x,y)}  \, dxdy\\
& + \int_{ A\times \calR(B)} {\eta(x,\calR(y))}{K^+_s(x,y)}\, dxdy+ \int_{ \calR(A)\times B}{\eta(\calR(x),y)} K^+_s(x,y)\, dxdy.  \nonumber
\end{align}
As a consequence if $A=\calR(A)$ and $B=\calR(B)$ then, for every $v\in \calH_e^s$, 
\be \label{eq:froKtoKplus}
\int_{A\times B}( v(x)- v(y))^2 \ov K(x,y)\, dxdy= 2 \int_{A\times B}{( v(x)-v(y))^2}K^+_s(x,y)  \, dxdy.
\ee
\end{enumerate}
We start with the following crucial result for getting a priori H\"older estimate below.
\begin{lemma}\label{lem:reflect-sol}
Suppose that $u\in H^s_{loc}(\ov{ \R^N_+})\cap L^1(\R^N_+; (1+|x|^{N+2s})^{-1})$ and $f\in L^\infty(\R^N)$ solves 
$$
\Ds_{\ov{\R^N_+}}u=f \qquad\textrm{ in  $\R^N_+$}
$$
i.e.  for all $\vp\in H^s_{loc}(\ov{ \R^N_+})$ with  Supp$(\vp)\subset \subset{ \R^N}$
\be \label{eq:even-extend-weak}
\int_{\R^N\times \R^N}( u(x)- u(y))(\vp(x)-\vp(y) )  K^+_s(x,y)\, dxdy=\int_{\R^N}f(x)\vp(x)\, dx.
\ee
Then  $\ov u \in H^s_{loc,e}({ \R^N})\cap  L^1(\R^N; (1+|x|^{N+2s})^{-1})$ and for all  $\vp\in H^s_{loc,e}(\R^N)$, with Supp$(\vp)\subset \subset{ \R^N}$
$$
\int_{\R^N\times \R^N}(\ov u(x)-\ov u(y))(\vp(x)-\vp(y) )\ov  K(x,y)\, dxdy=2\int_{\R^N}f(x)\vp(x)\, dx.
$$
\end{lemma}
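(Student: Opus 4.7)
The strategy is to exploit the even extension $\ov u$ and the $\calR$-symmetry built into $\ov K$: I would decompose the target bilinear form according to the signs of $(x_N,y_N)$, use reflections to reduce each piece to an integral over $\R^N_+\times\R^N_+$, and then invoke the assumed weak equation for $u$.

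First, I would verify that $\ov u\in H^s_{loc,e}(\R^N)\cap L^1(\R^N;(1+|x|^{N+2s})^{-1})$. The $\calR$-evenness and the weighted $L^1$-bound are immediate from the reflection-invariance of the measure. For the local $H^s$-seminorm on a ball $B_R$, I would split $[\ov u]^2_{H^s(B_R)}$ into four pieces indexed by the signs of $(x_N,y_N)$. After substituting $x\mapsto\calR x$ and/or $y\mapsto\calR y$ in the three non-trivial pieces, the elementary inequality $(x_N+y_N)^2\geq(x_N-y_N)^2$, valid for $x_N,y_N\geq 0$, yields the bound $[\ov u]^2_{H^s(B_R)}\leq 4\,[u]^2_{H^s(B_R^+)}<\infty$; this is precisely the content of identity \eqref{eq:froKtoKplus} combined with \eqref{eq:ovK-coerciv}.

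Next, for $\vp\in H^s_{loc,e}(\R^N)$ with compact support, I would compute
$$I := \int_{\R^N\times\R^N}(\ov u(x)-\ov u(y))(\vp(x)-\vp(y))\,\ov K(x,y)\,dxdy$$
by decomposing $\R^N\times\R^N$ into the four sign regions of $(x_N,y_N)$ and using the explicit form of $\ov K$ on each. Performing the reflection $x\mapsto\calR x$ and/or $y\mapsto\calR y$ in each of the three regions other than $\R^N_+\times\R^N_+$, the key pointwise identity
$$|x'-y'|^2+(x_N+y_N)^2 = |x-\calR y|^2,$$
combined with the $\calR$-invariance of $\ov u$ and $\vp$, converts each of the four regional integrals into
$$\frac{c_{N,s}}{2}\int_{\R^N_+\times\R^N_+}\frac{(u(x)-u(y))(\vp(x)-\vp(y))}{|x-y|^{N+2s}}\,dxdy = \frac12\int_{\R^N\times\R^N}(u(x)-u(y))(\vp(x)-\vp(y))\,K^+_s(x,y)\,dxdy.$$
Summing the four contributions yields $I = 2\int(u(x)-u(y))(\vp(x)-\vp(y))\,K^+_s(x,y)\,dxdy$.

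Finally, I would observe that $\vp|_{\ov{\R^N_+}}\in H^s_{loc}(\ov{\R^N_+})$ has compact support in $\R^N$, so it is an admissible test function for the assumed weak equation $\Ds_{\ov{\R^N_+}} u = f$; substituting gives $I = 2\int_{\R^N}f(x)\vp(x)\,dx$, which is the desired identity. The argument is essentially bookkeeping: the definition of $\ov K$, with the cross-region denominator $|x'-y'|^2+(x_N+y_N)^2$, is tailor-made to make the four regional integrals coincide after reflection. The main technical care lies in the function-space verification and in justifying the Fubini-type manipulations and absolute convergence of each piece, for which the compact support of $\vp$ together with the weighted $L^1$-integrability of $\ov u$ at infinity are the essential ingredients.
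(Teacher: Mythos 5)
Your argument is correct and follows essentially the same route as the paper: the paper packages your four-region sign decomposition and reflections into the pre-established identity \eqref{eq:change-inK}, exploits the symmetries $\eta(x,y)=\eta(y,x)=\eta(\calR(x),y)$ of $\eta(x,y)=(\ov u(x)-\ov u(y))(\vp(x)-\vp(y))$ to identify the four pieces, and then invokes \eqref{eq:even-extend-weak}, exactly as you do. The membership $\ov u\in H^s_{loc,e}(\R^N)$ via \eqref{eq:froKtoKplus} and the coercivity bound is also the paper's argument.
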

\begin{proof}
The fact that $\ov u \in H^s_{loc,e}({ \R^N})$ follows from \eqref{eq:froKtoKplus}. Moreover by a change of variable, we have   $\ov u\in  L^1(\R^N; (1+|x|^{N+2s})^{-1})$.
We let  $\vp\in \calC_e$ and put $\eta(x,y)=(\ov  u(x)-\ov u(y))(\vp(x)-\vp(y) )$. Then  for all $x,y\in \R^N$,
$$
\eta (x,y)=\eta (y,x)=\eta (\calR(x), y).
$$
Using \eqref{eq:change-inK} and the fact that $\R^N=\calR(\R^N)$,   we get 
\begin{align*}
&\int_{\R^N\times \R^N}\eta(x,y)\ov K(x,y)\, dxdy= 2 \int_{\R^N\times \R^N}( u(x)- u({y}))(\vp(x)-\vp(y) )  K^+_s(x,y)\, dxdy.
\end{align*}
The proof is thus complete thanks \eqref{eq:even-extend-weak}.
\end{proof}

In our next result we prove  H\"older continuity of $\ov u$ (solution to \eqref{eq:even-extend-weak}) on $B_1$. The argument  is based on the De Giorgi iteration techniques. In view of the recent work  of  \cite{Cozzi}, it suffices to prove that $\ov u$ belongs to a fractional  De Giorgi class, as defined in \cite{Cozzi}. Since our proof uses  similar arguments as in \cite{Cozzi}, we shall only put more  attention on the fact that $\ov K$ is not bounded from above by $|x-y|^{-N-2s}$ on $\R^{N}\times \R^N$. But using the symmetry properties of the tests functions and the solution, we can reach our goal.\\

We fix the notations as in the aforementioned paper. For a real number $a$, we write $a_+=\max(a,0)$ and $a_-=\max(-a,0)$. For a measurable real valued function $v$ and a real number $k$,
$$
A^+_v(k):= \{x\in \R^N\,:\, v>k\}, \qquad  
A^+_v(k,x_0,r)=A^+_v(k)\cap B_r(x_0).
$$
\begin{proposition}\label{prop:DGclass}
Let $\ov u \in H^s_{loc,e}({ \R^N})\cap L^1(\R^N; (1+|x|^{N+2s})^{-1})$ and $f\in L^\infty(\R^N)$ satisfy, for all $\vp\in H^s_{loc,e}(\R^N)$, Supp$(\vp)\subset \subset{ \R^N}$, 
\be \label{eq:even-extend-weak-1}
\int_{\R^N\times \R^N}(\ov u(x)-\ov u(y))(\vp(x)-\vp(y) )\ov  K(x,y)\, dxdy=\int_{\R^N}f(x)\vp(x)\, dx.
\ee
Let $k\in \R$  and $x_0\in \R^N$, with $x_0\cdot e_N=0$. Then for  $0<\rho<\t\leq 1$, we have 
\begin{align*}
&\frac{1}{2} [w_\pm]_{H^s(B_\rho(x_0))} +\frac{1}{2}    \int_{B_{\rho}(x_0)} w_\pm(x )\int_{B_{\t}(x)  } \frac{w_\mp(y)}{|x-y|^{N+2s}} \, dy dx \nonumber\\
&\leq \frac{C(N) }{(\t-\rho)^2 (1-s)}\|w_\pm\|^2_{L ^2(B_\t(x_0))}+ \frac{16}{(\t-\rho)^{N+2s} } \|w_\pm\|_{L^1(B_\t (x_0))} \int_{|y|\geq \rho }\frac{ w_\pm(y)} {|y-x_0|^{N+2s}} \, dy\\
& +  \frac{2}{c_{N,s}} \|f\|_{L^\infty(\R^N)}^2A^\pm(k,x_0,\t).
\end{align*}
where $w_\pm=(\ov u-k)_\pm$.
\end{proposition}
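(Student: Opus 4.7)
The plan is to adapt the fractional De Giorgi class argument of \cite{Cozzi} to the non-strongly-elliptic kernel $\ov K$, by testing \eqref{eq:even-extend-weak-1} with a suitable truncation of $w_\pm$. The crucial reduction is that since $x_0\cdot e_N=0$, every ball $B_r(x_0)$ is invariant under $\calR$; one may therefore choose an even cutoff $\tau\in C^\infty_c(B_{(\rho+\t)/2}(x_0))$ with $\tau\equiv 1$ on $B_\rho(x_0)$, $|\n\tau|\le C/(\t-\rho)$ and $\tau\circ\calR=\tau$. Since $\ov u$ is itself $\calR$-symmetric, $\vp:=w_+\tau^2$ lies in $H^s_{loc,e}(\R^N)$ with compact support and is admissible in \eqref{eq:even-extend-weak-1}. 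The case of $w_-$ follows identically by replacing $k$ with $-k$.

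Plugging $\vp$ in, I would use the pointwise identity
\begin{align*}
(\ov u(x)-\ov u(y))(\vp(x)-\vp(y))
&= (w_+(x)-w_+(y))^2\tau^2(x)+\bigl(w_+(x)w_-(y)+w_+(y)w_-(x)\bigr)\tau^2(x)\\
&\quad+(\ov u(x)-\ov u(y))\,w_+(y)(\tau^2(x)-\tau^2(y)),
\end{align*}
which exploits $w_+w_-\equiv 0$. After symmetrising in $(x,y)$ and applying Young's inequality to the last term, split as $\tau^2(x)-\tau^2(y)=(\tau(x)-\tau(y))(\tau(x)+\tau(y))$, the mixed term is bounded below by $-\tfrac14(w_+(x)-w_+(y))^2(\tau(x)+\tau(y))^2-Cw_+(y)^2(\tau(x)-\tau(y))^2$. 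Inserting this into \eqref{eq:even-extend-weak-1} and invoking the coercivity \eqref{eq:ovK-coerciv} against the two non-negative terms produces the desired lower bound $\tfrac12[w_+]^2_{H^s(B_\rho(x_0))}+\tfrac12\int_{B_\rho(x_0)} w_+(x)\int_{B_\t(x)} w_-(y)|x-y|^{-N-2s}\,dy\,dx$ on the left-hand side.

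For the upper bound, one must control (i) the cutoff error $\int\!\!\int w_+(y)^2|\tau(x)-\tau(y)|^2\ov K\,dx\,dy$, (ii) the tail contribution from $y\notin B_\t(x_0)$ (where $\vp(y)=0$), and (iii) the forcing $\int f\vp\,dx$. For (i) the issue is that $\ov K$ is not pointwise bounded above by $C|x-y|^{-N-2s}$ on the crossing region $\{x_Ny_N<0\}$; however, since both $w_+$ and $\tau$ are $\calR$-symmetric, the integrand is $\calR$-even in each variable, so identity \eqref{eq:change-inK} reduces the integral to four copies of the analogous one against $K^+_s$, each bounded by $C(1-s)^{-1}(\t-\rho)^{-2}\|w_+\|^2_{L^2(B_\t(x_0))}$ via $|\tau(x)-\tau(y)|\le C|x-y|/(\t-\rho)$ and $\int_{B_\t}|z|^{2-N-2s}\,dz\le C\t^{2-2s}/(1-s)$. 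For (ii), using $\ov K(x,y)\le C(|x-y|^{-N-2s}+|x-\calR(y)|^{-N-2s})$ on the crossing region together with the evenness of $\ov u$, the tail reduces to $C(\t-\rho)^{-N-2s}\|w_+\|_{L^1(B_\t(x_0))}\int_{|y|\ge\rho}w_+(y)|y-x_0|^{-N-2s}\,dy$. For (iii), $|\int f\vp|\le\|f\|_{L^\infty}\|w_+\|_{L^2(A^+)}|A^+(k,x_0,\t)|^{1/2}$, and Young's inequality absorbs the $L^2$-factor into the leading seminorm to leave the $\tfrac{2}{c_{N,s}}\|f\|^2_{L^\infty}|A^+(k,x_0,\t)|$ term.

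The principal conceptual obstacle is precisely the asymmetry of $\ov K$ across $\partial\R^N_+$, which at first glance prevents direct use of Cozzi-type estimates. The even-extension mechanism of Lemma \ref{lem:reflect-sol} combined with the rewriting identity \eqref{eq:change-inK} turns this into a technical simplification: every integral appearing in the upper bound can be converted, at the cost of a bounded multiplicative factor, into one against the strongly elliptic $K^+_s$ on $\R^N_+\times\R^N_+$, with all constants tracked so that the sharp $(1-s)^{-1}$ scaling as $s\to 1$ survives.
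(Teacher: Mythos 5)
Your proposal is correct and follows essentially the same route as the paper: test \eqref{eq:even-extend-weak-1} with the $\calR$-even function $\eta^2 w_\pm$, use the coercivity \eqref{eq:ovK-coerciv} for the two nonnegative terms on the left, and use the reflection identity \eqref{eq:change-inK} (with the $\calR$-evenness of the integrands) to convert every term requiring an upper bound on $\ov K$ into an integral against the strongly elliptic $K^+_s$, before treating the tail and forcing terms as in the standard fractional De Giorgi computation of Cozzi. The only caveat is a constant in your Young-inequality split of the mixed cutoff term (a coefficient $\tfrac14$ in front of $(w_+(x)-w_+(y))^2(\tau(x)+\tau(y))^2$ would cancel the leading seminorm after symmetrisation), but this is repaired by taking the Young parameter small, exactly as in the reference the paper cites.
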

\begin{proof}
We follow very closely the argument in  \cite[Proof of Proposition 8.5]{Cozzi}.         We let $\eta\in \calC_e$  be  such that $0\leq \eta\leq 1$, with Supp$(\eta)=\ov{B_{\frac{\rho+\t}{2}}}$ and $\eta=1$ on $B_\rho$.  For simplicity, we assume that $x_0=0$ and we write $A^\pm (k,r)=A^+_{\ov u}(k,0,r) $.
    We use $\vp =\eta^2 w_+ \in H^s_{loc,e}(\R^N)$  as a test function  in \eqref{eq:even-extend-weak-1} to get 
\begin{align*}
\int_{\R^N\times \R^N}(\ov u(x)-\ov u(y))(\vp(x)-\vp(y) )\ov  K(x,y)\, dxdy\leq \int_{\R^N}|f(x)| \eta^2(x) w_+(x)\, dx.
\end{align*}
As a consequence
\begin{align}\label{eq:sol-even}
&\int_{B_\t\times B_\t}(\ov u(x)-\ov u(y))(\vp(x)-\vp(y) )\ov  K(x,y)\, dxdy \\
&+2 \int_{B_\t\times \R^N\setminus B_\t}(\ov u(x)-\ov u(y))(\vp(x)-\vp(y) )\ov  K(x,y)\, dxdy \leq \int_{A^+(k,\frac{\t+\rho}{2})}|f(x)| \eta^2(x) w_+(x)\, dx.
\nonumber
\end{align}
Now from the argument in \cite[Section 8]{Cozzi}, we have 
\begin{align*}
\int_{B_\t\times B_\t}&(\ov u(x)-\ov u(y))(\vp(x)-\vp(y) )\ov  K(x,y)\, dxdy\\
&\geq \frac{1}{2} \int_{B_\rho\times B_\rho}  {(w_+(x)-w_+(y))^2}\ov K(x,y)\, dx dy  +\frac{1}{2} \int_{B_\rho}w_+(x)\int_{B_\t}w_-(y)\ov K(x,y)\, dy dx \\
&-2\int_{B_\t\times B_\t}\max(w_+(x),w_+(y))^2 {(\eta(x)-\eta(y))^2}\ov K(x,y)\, dx dy.
\end{align*}
Hence, by \eqref{eq:ovK-coerciv}, we have 
\begin{align}
\int_{B_\t\times B_\t}&(\ov u(x)-\ov u(y))(\vp(x)-\vp(y) )\ov  K(x,y)\, dxdy \nonumber\\
&\geq \frac{c_{N,s}}{2} [w_+]_{H^s(B_\rho)} +\frac{c_{N,s}}{2} \int_{B_\rho}w_+(x)\int_{B_\t}w_-(y)|x-y|^{-N-2s}\, dy dx  \nonumber\\
&-2\int_{B_\t\times B_\t}\max(w_+(x),w_+(y))^2 {(\eta(x)-\eta(y))^2}\ov K(x,y)\, dx dy. \label{eq:est1-DG}
\end{align}
Now by \eqref{eq:change-inK}, we get 
\begin{align*}
&\int_{B_\t\times B_\t}\max(w_+(x),w_+(y))^2 {(\eta(x)-\eta(y))^2}\ov K(x,y)\, dx dy\\
&= 4  \int_{B_\t\times B_\t}\max(w_+(x),w_+(y))^2 {(\eta(x)-\eta(y))^2}K_+(x,y)\, dx dy\\
&\leq  8 c_{N,s}  \|\n \eta\|_{L^\infty(B_{\frac{\rho+\t}{2}})}^2 \|w_+\|^2_{L ^2(B_\t)}\sup_{x}\int_{|x-y|<2\t} |x-y|^{-N-2s+2}\, dy\\
&\leq \frac{c_{N,s} C(N) }{(\t-\rho)^2 (1-s)}\|w_+\|^2_{L ^2(B_\t)}.
\end{align*}
Using this in \eqref{eq:est1-DG}, we find that 
\begin{align}
&\frac{1}{c_{N,s}}\int_{B_\t\times B_\t}(\ov u(x)-\ov u(y))(\vp(x)-\vp(y) )\ov  K(x,y)\, dxdy\\
&\geq \frac{1}{2} [w_+]_{H^s(B_\rho)} +\frac{1}{2} \int_{B_\rho}w_+(x)\int_{B_\t}w_-(y)|x-y|^{-N-2s}\, dy dx   -\frac{  C(N) }{(\t-\rho)^2 (1-s)}\|w_+\|^2_{L ^2(B_\t)} .
\label{eq:est2-DG}
\end{align}
Next, following once more  \cite[Section 8]{Cozzi}, for every $r_0>0$,  we also have  that 
\begin{align*}
&\int_{B_\t\times \R^N\setminus B_\t}(\ov u(x)-\ov u(y))(\vp(x)-\vp(y) )\ov  K(x,y)\, dxdy\\
&\geq   \int_{B_\rho} w_+(x )\int_{B_{r_0}(x)\setminus B_\t } w_-(y)\ov K(x,y)\, dy dx\\ 
&-2\int_{A^+_{\ov u}( k,\frac{\t+\rho}{2})} w_+(x) \int_{\{\ov u(y)>\ov u(x)\}\setminus B_\t}(\ov u(y)-\ov u(x))\ov K(x,y)\, dy\, dx\\
&\geq   \int_{B_\rho} w_+(x )\int_{B_{r_0}(x)\setminus B_\t } w_-(y)\ov K(x,y)\, dy dx -2\int_{A^+_{\ov u}( k,\frac{\t+\rho}{2})} w_+(x) \int_{\R^N\setminus B_\t}{ w_+(y)}\ov K(x,y)\, dy\, dx.
\end{align*}
This then implies that
\begin{align*}
&\int_{B_\t\times \R^N\setminus B_\t}(\ov u(x)-\ov u(y))(\vp(x)-\vp(y) )\ov  K(x,y)\, dxdy\\
&\geq   \int_{B_\rho} w_+(x )\int_{B_{r_0}(x)\setminus B_\t } w_-(y)\ov  K(x,y) \, dy dx -2\int_{B_{\frac{\t+\rho}{2}}} w_+(x) \int_{\R^N\setminus B_\t}{ w_+(y)}\ov K(x,y)\, dy\, dx.
\end{align*}
Moreover from    \eqref{eq:ovK-coerciv} and \eqref{eq:change-inK}, we get, 
\begin{align*}
&\int_{B_\t\times \R^N\setminus B_\t}(\ov u(x)-\ov u(y))(\vp(x)-\vp(y) )\ov  K(x,y)\, dxdy\\
&\geq c_{N,s}  \int_{B_\rho} w_+(x )\int_{B_{r_0}(x)\setminus B_\t } \frac{w_-(y)}{|x-y|^{N+2s} }\, dy dx -8\int_{B_{\frac{\t+\rho}{2}}} w_+(x) \int_{\R^N\setminus B_\t}{ w_+(y)} K_+(x,y)\, dy\, dx\\
&\geq  c_{N,s} \int_{B_\rho} w_+(x )\int_{B_{r_0}(x)\setminus B_\t } \frac{w_-(y)}{|x-y|^{N+2s} }\, dy dx -8c_{N,s}\int_{B_{\frac{\t+\rho}{2}}} w_+(x) \int_{\R^N\setminus B_\t} \frac{ w_+(y)}{|x-y|^{N+2s}} \, dy\, dx.
\end{align*}
Using that $|x-y|\geq \frac{\t-\rho}{2}|y|$ we then deduce that 
\begin{align}
&\int_{B_\t\times \R^N\setminus B_\t}(\ov u(x)-\ov u(y))(\vp(x)-\vp(y) )\ov  K(x,y)\, dxdy  \nonumber\\
&\geq  c_{N,s}  \int_{B_\rho} w_+(x )\int_{B_{r_0}(x)\setminus B_\t } \frac{w_-(y)}{|x-y|^{N+2s}} \, dy dx  -\frac{8c_{N,s} }{(\t-\rho)^{N+2s} } \|w_+\|_{L^1(B_\t )} \int_{|y|\geq \t }\frac{ w_+(y)} {|y|^{N+2s}} \, dy.
 \label{eq:aaa1}
\end{align}
Now, using Young's inequality, we can estimate
$$
 \int_{A^+(k,\frac{\t+\rho}{2})}|f(x)| \eta^2(x) w_+(x)\, dx\leq 2 \|f\|_{L^\infty(\R^N)}^2A^+(k,\frac{\t+\rho}{2})+ \frac{2}{(\t-\rho)^2}\|w_+\|^2_{L^2(B_\t)}.
 $$
Combining  the above estimate with  \eqref{eq:aaa1},   \eqref{eq:sol-even} and \eqref{eq:est2-DG}, we conclude that  
\begin{align*}
&\frac{c_{N,s}}{2} [w_+]_{H^s(B_\rho)} +\frac{c_{N,s}}{2}    \int_{B_\rho} w_+(x )\int_{B_{r_0}(x)  } \frac{w_-(y)}{|x-y|^{N+2s}} \, dy dx \nonumber\\
&\leq \frac{c_{N,s} C(N) }{(\t-\rho)^2 (1-s)}\|w_+\|^2_{L ^2(B_\t)}+ \frac{16 c_{N,s}}{(\t-\rho)^{N+2s} } \|w_+\|_{L^1(B_\t )} \int_{|y|\geq \t }\frac{ w_+(y)} {|y|^{N+2s}} \, dy\\
&+  2 \|f\|_{L^\infty(\R^N)}^2A^+(k,\t).
\end{align*}
Next, testing the equation with $\eta^2 w_-$, we get the same estimates as above, with $w_-$ in the place of $w_+$ and vice versa. Now by the translation invariance of the problem we get the result.
\end{proof}
Our  next result provides, in particular, H\"older continuity up to the boundary for   functions which  are harmonic with respect to $\Ds_{\ov{\R^N_+}}$. 
\begin{theorem}\label{th:the-Apri-Hold}
Let $f\in L^\infty(\R^N)$ and suppose that $u\in H^s_{loc}(\ov{ \R^N_+})\cap L^1(\R^N_+; (1+|x|^{N+2s})^{-1})$  solves 
$$
\Ds_{\ov{\R^N_+}}u=f \qquad\textrm{ in  $B_2^+$.}
$$
Then there exist $\a_0=\a_0(N,s)>0$ and $C=C(N,s)>0$ such that 
$$
\|u\|_{C^{\a_0}\left(\ov{B_1^+}\right)}\leq  C \left(  \|u\|_{L^2(B_2^+)}+ \int_{\{|y|\geq 1/2\}\cap\R^N_+}\frac{|u(y)|}{|y|^{N+2s}}dy+ \|f\|_{L^\infty(\R^N)}\right).
$$
\end{theorem}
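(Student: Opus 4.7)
The plan is to reduce the half-space problem to a whole-space equation via even reflection, verify a fractional De~Giorgi class estimate at boundary points, and invoke the H\"older regularity theory of \cite{Cozzi}. First I would apply Lemma~\ref{lem:reflect-sol} to produce the even reflection $\overline u\in H^s_{loc,e}(\R^N)\cap L^1(\R^N;(1+|x|^{N+2s})^{-1})$, which satisfies the weak equation \eqref{eq:even-extend-weak-1} with kernel $\overline K$ and right-hand side $2 f\, 1_{\R^N_+}$. This removes the boundary of $\R^N_+$ from the picture at the cost of working with the kernel $\overline K$, which is coercive from below by $\frac{1}{2} c_{N,s}|x-y|^{-N-2s}$ but \emph{not} uniformly bounded from above by the same quantity on $\R^N\times\R^N$.

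Next I would apply Proposition~\ref{prop:DGclass} at an arbitrary $x_0\in B_1\cap\partial\R^N_+$ with $0<\rho<\tau\leq 1$. Together with translation invariance in the $x'$-variable and the even symmetry of $\overline u$, this exhibits $\overline u$ as an element of a fractional De~Giorgi class on a uniform neighbourhood of $\{x_N=0\}\cap B_1$, with the associated constants controlled by $\|u\|_{L^2(B_2^+)}$, $\|f\|_{L^\infty(\R^N)}$, and the tail $\int_{\{|y|\geq 1/2\}\cap\R^N_+}|u(y)|\,|y|^{-N-2s}\,dy$; the last contribution comes from bounding $w_\pm\leq|\overline u|+|k|$ outside $B_\tau(x_0)$ and using the reflection symmetry to reduce the resulting integral back to $\R^N_+$. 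Then I would invoke the oscillation-decay and H\"older regularity theorem for such classes from \cite{Cozzi} to obtain a uniform $C^{\alpha_0}$ estimate of $\overline u$ on a neighbourhood of $\{x_N=0\}\cap\overline{B_1}$, with $\alpha_0=\alpha_0(N,s)>0$. A standard interior H\"older estimate for the regional fractional Laplacian on the remaining compact subset of $\{x_N>0\}\cap\overline{B_1}$ (where $\overline K$ is equivalent to $|x-y|^{-N-2s}$ plus a smooth bounded potential, cf.~\eqref{eq:From-reg-tofrac-V}), combined with a covering argument, completes the estimate on $\overline{B_1^+}$.

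The main obstacle is the failure of pointwise upper ellipticity of $\overline K$ when $x_N y_N<0$, which prevents a direct quotation of De~Giorgi-type results for strongly elliptic nonlocal kernels. The resolution, already built into Proposition~\ref{prop:DGclass}, is to use only symmetric test functions of the form $\varphi=\eta^2 w_\pm$ with $\eta\in\calC_e$, so that every quadratic energy that would otherwise require an upper bound on $\overline K$ can be rewritten via the symmetrization identity \eqref{eq:change-inK} as an integral against the genuinely elliptic half-space kernel $K^+_s$; this substitutes for the missing pointwise upper bound. Once the De~Giorgi class is established, the H\"older estimate follows from a now-standard iteration.
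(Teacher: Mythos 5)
Your proposal matches the paper's proof essentially step for step: even reflection via Lemma \ref{lem:reflect-sol}, membership in a fractional De Giorgi class via Proposition \ref{prop:DGclass} (with the symmetrization identity \eqref{eq:change-inK} standing in for the missing upper ellipticity of $\ov K$), the H\"older estimates of \cite{Cozzi} at boundary points, and a combination with interior regularity. The argument is correct and no further comment is needed.
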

\begin{proof}
By Lemma \ref{lem:reflect-sol},  $\ov u=u\circ\calR\in H^s_{loc,e}(\R^N)\cap  L^1(\R^N; (1+|x|^{N+2s})^{-1})$. Moreover by Proposition \ref{prop:DGclass}, $\ov u$ belongs to the De Giorgi  class in the sense of  \cite{Cozzi}.  We can therefore apply  \cite[Theorem 6.2  and Theorem 6.4]{Cozzi} to get 
$$
| \ov u(x)-\ov u(x_0)|  \leq C |x-x_0|^{\a_0} \left(  \|\ov u\|_{L^2(B_2)}+ \int_{\{|y|\geq 1/2\}}\frac{|\ov u(y)|}{|y|^{N+2s}} dy+ \|f\|_{L^\infty(\R^N)} \right)
$$
for all $x\in B_1$ and $x_0\in B_1'$. Combining this with interior regularity, we get the desired result.
\end{proof}

\section{  Liouville theorem}\label{s:Liouville} 
The main result of the present section is the following classification result of  functions $u\in H^s_{loc}(\ov{\R_+^N})  $ solving
\begin{equation}\label{eq:flat-eq-good-rr}
	\Ds_{\ov{\R^N_+}}u=0\qquad\text{ in  $\R_+^N$}
	\end{equation}
and having a  polynomial growth of order  smaller than $2s$.  
The argument we develop below will provides also a Liouville-type result in the case of the Censored fractional Laplacian $\Ds_{\R^N_+}$.  Its proof requires several preliminary results.
\begin{theorem}\label{th:Liouville}
Let $u\in H^s_{loc}(\ov{\R_+^N})   $  be a solution to \eqref{eq:flat-eq-good-rr}  and suppose that, for some constants $C>0$ and $\e\in (0,2s)$,
\be\label{eq:grow-final-liouville} 
\|u\|_{L^2(B_R)}\leq C R^{\frac{N}{2}+\e} \qquad\textrm{ for all $R\geq 1$}.
\ee
\begin{itemize}
\item[(i)] If $2s\leq  1$ then $u$ is a constant function on $\R_+^N$.
\item[(ii)] If $2s>1$ then $u(x',x_N)=a+  c\cdot x'$ for all $x=(x',x_N)\in \R_+^N$, for some constants $a\in \R$ and $c\in \R^{N-1}$.
%
\end{itemize}
\end{theorem}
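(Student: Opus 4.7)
My plan is to combine a scale-invariant H\"older estimate derived from Theorem~\ref{th:the-Apri-Hold} with an iterated tangential-difference argument (exploiting the invariance of $\R^N_+$ and the kernel $K^+_s$ under translations in $x'$), and then to reduce the residual analysis to a one-dimensional Liouville statement for $(-\Delta)^s_{\R_+}$.

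First I would establish a scale-invariant a priori estimate. Setting $\tilde u_R(\xi):=R^{-\e}u(R\xi)$ for $R\ge 1$, the invariance of $\R^N_+$ and $K^+_s$ under $x\mapsto Rx$ shows that $\tilde u_R$ again satisfies $\Ds_{\ov{\R^N_+}}\tilde u_R=0$ on $\R^N_+$. The hypothesis \eqref{eq:grow-final-liouville} gives $\|\tilde u_R\|_{L^2(B_2^+)}\le C$, and a dyadic decomposition combined with $\e<2s$ bounds the tail $\int_{\{|\xi|\ge 1/4\}\cap\R^N_+}|\xi|^{-N-2s}|\tilde u_R(\xi)|\,d\xi$ uniformly in $R$. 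Theorem~\ref{th:the-Apri-Hold} then yields $\|\tilde u_R\|_{C^{\a_0}(\ov{B_1^+})}\le C$ independently of $R$, which rescales to
\begin{equation*}
\|u\|_{L^\infty(\ov{B_R^+})}\le CR^{\e},\qquad [u]_{C^{\a_0}(\ov{B_R^+})}\le CR^{\e-\a_0}\qquad (R\ge 1).
\end{equation*}

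Next I would iterate in tangential directions. For any $h\in\R^{N-1}\times\{0\}$, the function $u_h(x):=u(x+h)-u(x)$ again solves $\Ds_{\ov{\R^N_+}}u_h=0$ by tangential translation invariance, and the estimate above gives $|u_h(x)|\le C|h|^{\a_0}(1+|x|)^{\e-\a_0}$, so $u_h$ fits the same setup as $u$ with growth exponent $\e-\a_0$. Iterating $k$ times with $k\a_0>\e$, the $k$-fold iterated tangential difference $u^{(k)}$ solves the equation with negative pointwise growth and hence $u^{(k)}\to 0$ at infinity. A strong maximum principle applied to the reflection $\ov{u^{(k)}}$, whose equation on $\R^N$ is governed by the kernel $\ov K$ that is coercive from below by \eqref{eq:ovK-coerciv}, then forces $u^{(k)}\equiv 0$. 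Consequently, for each fixed $x_N>0$, $u(\,\cdot\,,x_N)$ is a polynomial in $x'$ of degree at most $k-1$; the pointwise bound $\|u\|_{L^\infty(\ov{B_R^+})}\le CR^{\e}$ further restricts this degree to $\lfloor\e\rfloor$, giving
\begin{equation*}
u(x',x_N)=a_0(x_N)\ \text{ when }\ \e<1,\qquad u(x',x_N)=a_0(x_N)+a(x_N)\cdot x'\ \text{ when }\ 1\le\e<2s.
\end{equation*}

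Substituting either ansatz into $\Ds_{\ov{\R^N_+}}u=0$ and integrating $y'\in\R^{N-1}$ against $|x-y|^{-N-2s}$ (the odd-in-$(y'-x')$ pieces vanish by symmetry), the equation separates into the independent one-dimensional equations $(-\Delta)^s_{\R_+}a_0=0$ on $\R_+$ and $(-\Delta)^s_{\R_+}a_j=0$ on $\R_+$ for each component $a_j$, with respective growths $|a_0(t)|\le C(1+t)^{\e}$ and $|a_j(t)|\le C(1+t)^{\e-1}$. The remaining step, and the main analytic obstacle, is a one-dimensional Liouville theorem: any weak solution $b$ of $(-\Delta)^s_{\R_+}b=0$ with $|b(t)|\le C(1+t)^{\beta}$, $\beta<2s$, must be constant. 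My plan for this is to pass to the Caffarelli-Silvestre extension of $b$ on the quadrant $\R_+\times(0,\infty)$, control its behavior near the lateral boundary $\{x=0\}$ by a Hardy-type inequality for the weight $y^{1-2s}$, and then use the even reflection across $\{x=0\}$ together with Liouville-type arguments in the extended weighted upper half-plane to force $b$ to be constant. Granting this 1D result, $a_0$ and each $a_j$ are constants, which yields $u\equiv a$ in case~(i) and $u(x',x_N)=a+c\cdot x'$ in case~(ii).
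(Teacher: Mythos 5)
Your overall architecture -- scale-invariant H\"older bounds from Theorem \ref{th:the-Apri-Hold}, tangential translation invariance to reduce $u$ to the form $a_0(x_N)+a(x_N)\cdot x'$, and a one-dimensional classification via the Caffarelli--Silvestre extension -- matches the paper's (which uses incremental quotients to bound $\n_{x'}^2u$ and a scaling argument in place of your iterated differences plus maximum principle; both routes are workable for that part, though your ``strong maximum principle'' should be run as an energy argument, e.g.\ testing with $(u^{(k)}-\epsilon)_+$, since at this stage $u^{(k)}$ is only $C^{\a_0}$ up to $\de\R^N_+$ and the operator cannot be evaluated pointwise at a boundary maximum).

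The genuine gap is in the one-dimensional Liouville theorem, which you correctly flag as the main obstacle but whose sketch does not work as stated. First, the statement ``any solution of $\Ds_{\R_+}b=0$ with growth $\b<2s$ is constant'' is false for the censored operator when $2s>1$: $b(x)=x^{2s-1}$ solves it, lies in $H^s_{loc}([0,\infty))$, and has growth $2s-1<2s$. What is true -- and what you need -- is the statement for the regional (Neumann) operator $\Ds_{\ov{\R_+}}$, and the only thing that kills the $x^{2s-1}$ mode there is the Neumann weak formulation itself, via the nontrivial fact that $\Ds_{\ov{\R_+}}x^{2s-1}\neq 0$ on $\R_+$ (Proposition \ref{prop:classif-1D-2s-not-1}, where this is quoted from \cite{AFR,Guan}). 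Your sketch never uses test functions that do not vanish at the origin, so it cannot distinguish the two boundary conditions and cannot rule out $x^{2s-1}$. Second, the proposed mechanism -- ``even reflection across $\{x=0\}$ plus a Liouville theorem in the weighted half-plane'' -- does not fit the extended problem: the extension $W$ of $(b-b(0))1_{(0,\infty)}$ satisfies the Robin-type condition $-t^{1-2s}\de_tW=\ov\k_s a_s x^{-2s}W$ on $\{x>0\}$ and $W=0$ on $\{x<0\}$, a mixed boundary condition that is not preserved by an even reflection in $x$. The paper instead classifies solutions of this mixed problem by separation of variables in polar coordinates, which requires the weighted angular eigenvalue problem \eqref{eq:11} together with the spectral information of Lemma \ref{lem:lambda-2} ($\l_1(s)=\frac{(2s-1)^2}{4}$ and $\frac{2s-1}{2}+\sqrt{\l_2(s)}\geq 2s$, the latter proved by another delicate rigidity argument). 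These two ingredients -- the spectral gap and the non-harmonicity of $x^{2s-1}$ for the regional operator -- are the heart of the theorem and are missing from your plan.
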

The following result provides   regularity estimates of  the tangential derivatives of  the solution on $\R^N_+$. This takes advantages on the translation invariant of the problem  in the tangential direction which allows  to estimate  the incremental quotient of $u$ given by $\frac{u(x+h e)-u(x)}{|h|^\a}$, $e\in \R^{N-1}$, as in \cite[Section 5.3]{Cab-Caff}.   However, since we are dealing here with nonlocal operators,  one should cut off the solution at each step in order to deal with some tail.
\begin{proposition} \label{prop:high-tangderiv}
We consider  $u\in H^s_{loc}(\ov{\R^N_+})  \cap  L^1(\R^N_+; (1+|x|^{N+2s})^{-1})$ satisfying
\begin{equation}\label{eq:flat-eq-good}
\Ds_{\ov{\R^N_+}}u=0\quad\text{in}\quad\R^N_+.
	\end{equation}
	Then 
$$
	\|\n_{x'} ^2 u\|_{L^\infty(B_1^+)}\leq C\left(  \|u\|_{L^2(B_2^+)}+ \int_{\{|y|\geq 1/2\}\cap \R^N_+}|u(y)| |y|^{-N-2s}\, dy \right).
$$
	\end{proposition}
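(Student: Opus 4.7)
The plan is to run a tangential incremental-quotient bootstrap in the spirit of Cabr\'e--Caffarelli \cite{Cab-Caff}, exploiting that both the half-space $\ov{\R^N_+}$ and the reflected kernel $\ov K$ of Lemma \ref{lem:reflect-sol} are invariant under translations in the tangential directions $e\in\R^{N-1}\times\{0\}$. Starting from Theorem \ref{th:the-Apri-Hold}, which gives $C^{\a_0}$ regularity up to the flat boundary, the idea is to upgrade tangential regularity in increments of $\a_0$ by applying that same estimate to suitably rescaled tangential incremental quotients, and to iterate finitely many times until second-order tangential derivatives are bounded.

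To neutralize the tail integral appearing in Theorem \ref{th:the-Apri-Hold}, I would first localize. Pick a smooth cutoff $\eta$ with $\eta\equiv 1$ on $B_{3/2}$ and $\supp\eta\subset B_2$, and set $\wtilde u:=\eta u$. A direct computation from the pointwise formula \eqref{eq:first-region-Lapla} shows
\begin{equation*}
\Ds_{\ov{\R^N_+}}\wtilde u=g\quad\text{in $B_1^+$},\qquad g(x):=c_{N,s}\int_{\R^N_+\cap B_{3/2}^c}\frac{(1-\eta(y))\,u(y)}{|x-y|^{N+2s}}\,dy,
\end{equation*}
and since the kernel singularity avoids $B_1^+$, $g$ is smooth on $B_1^+$ with $\|g\|_{C^k(B_1^+)}\le C_k\int_{\{|y|\ge 1/2\}\cap \R^N_+}|u(y)|\,|y|^{-N-2s}\,dy$ for every $k\in\N$. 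Now fix a unit vector $e\in\R^{N-1}\times\{0\}$ and $0<|h|\ll 1$. The difference $\wtilde u(\cdot+he)-\wtilde u(\cdot)$ is compactly supported and, by tangential translation invariance of $\Ds_{\ov{\R^N_+}}$, solves $\Ds_{\ov{\R^N_+}}(\wtilde u(\cdot+he)-\wtilde u(\cdot))=g(\cdot+he)-g(\cdot)$ in $B_{1-|h|}^+$. The rescaled quotient $v_h:=(\wtilde u(\cdot+he)-\wtilde u(\cdot))/|h|^{\a_0}$ is therefore uniformly bounded in $L^\infty(\R^N_+)$ by $[\wtilde u]_{C^{\a_0}}$, has bounded tail integral, and satisfies $\Ds_{\ov{\R^N_+}}v_h=f_h$ with $\|f_h\|_{L^\infty}\le[g]_{C^{\a_0}(B_1^+)}$. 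Applying Theorem \ref{th:the-Apri-Hold} to $v_h$ yields $\|v_h\|_{C^{\a_0}}\le C$ uniformly in $h$, and the resulting uniform control of the second-order tangential difference of $\wtilde u$ (hence of $u$) promotes tangential regularity from $C^{\a_0}$ to $C^{2\a_0}$ on a slightly smaller half-ball.

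Iterating, at each step replacing $\wtilde u$ by the current tangentially-differentiated object and $\a_0$ by the exponent already reached, we gain one more $\a_0$ of tangential regularity. Each time the tangential exponent crosses an integer, the corresponding classical tangential derivative of $u$ exists and satisfies the same equation in the weak sense (by passing to the limit on difference quotients in $C^{\a_0}$, which is standard once the uniform incremental bound in $h$ is in hand). After finitely many rounds we reach $\n_{x'}^2 u\in L^\infty(B_1^+)$ with a bound of the claimed form. The main obstacle in the whole scheme is precisely the nonlocal tail: the naive quotient $(u(\cdot+he)-u(\cdot))/|h|^{\a_0}$ has a tail integral that diverges like $|h|^{-\a_0}$, which would invalidate Theorem \ref{th:the-Apri-Hold}. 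The cutoff device above is exactly what resolves this, converting the nonlocal tail of $u$ into a smooth, bounded source term $g$ that is admissible as a right-hand side and whose tangential increments remain controlled by the tail of $u$ alone.
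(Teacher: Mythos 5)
Your proposal is correct and follows essentially the same route as the paper's proof: localize with a cutoff so that the nonlocal tail becomes a smooth, controlled right-hand side, exploit tangential translation invariance to apply the a priori H\"older estimate of Theorem \ref{th:the-Apri-Hold} to the rescaled incremental quotients, and iterate via the Cabr\'e--Caffarelli incremental-quotient lemma until the second tangential derivatives are bounded. The only difference is bookkeeping: the paper re-localizes at every iteration with a nested family of cutoffs $\phi_i$ supported in $B_{r_i}$, $r_i=1+2^{-i}$, whereas you use a single initial cutoff and shrink the half-ball slightly at each step.
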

\begin{proof}
For simplicity, we assume that $\|u\|_{L^2(B_2^+)}+ \int_{\{|y|\geq 1/2\}\cap \R^N_+}|u(y)| |y|^{-N-2s}\, dy\leq 1$. Hence by  Theorem \ref{th:the-Apri-Hold}, we have $\|u\|_{C^\a(B_2^+)}\leq C$.
Let $r_i:=1+2^{-i}$ and $\phi_i\in C^\infty_c(B_{r_i} )$ such that $\phi_i=1$ on $B_{r_{i+1}}$.
Let $v_i=\phi_i u$ which satisfies
$$
	(-\Delta)^s_{\ov{\R^N_+}}v_i= f_i  \qquad\text{ in $ B_{r_{i+2}}^+$,}
$$
where 
$$
f_i(x):= \phi_{i+2}(x) \int_{|y|\geq r_{i}}v_i(y)1_{\R^N_+}(y) |x-y|^{-N-2s}\, dy.
$$
It clearly satisfies $\|f_i\|_{C^2(\R^N)}\leq C.$ In view of   Theorem \ref{th:the-Apri-Hold}, we get 
\be \label{eq;est-u_i-CC}
	\| v_i\|_{C^{\a}(\R^N_+)}\leq   C.
\ee
For $e\in \R^{N-1}$ and $|e|=1$ and $h\in \R\setminus\{0\}$, we define
$$
u^{h,e}(x):= \frac{u(x+h e)-u(x)}{|h|^\a}, \qquad 
$$
Then we can consider $w_{i,h,\a}(x)=  \frac{v_i(x+h e)-v_i(x)}{|h|^\a}$. Then by \eqref{eq;est-u_i-CC},  for  $h\in B_{r_{i+5}}$, 
$$
 \|w_{i,h,\a}\|_{L^\infty(\R^N)}\leq C  .   
$$
Moreover, by the translation invariant of the problem in the tangential direction, 
$$
	(-\Delta)^s_{\ov{\R^N_+}} w_{i,h,\a} = g_{i}  \qquad\text{ in $ B_{r_{i+7}}^+$,}
$$
with $$ g_i(x)= f_i^{h,e}(x)+   \phi_{i+5}(x) \int_{|y|\geq r_{i+5}} v_i^{h,e}(y)1_{\R^N_+}(y) |x-y|^{-N-2s}\, dy.$$
Clearly  $\|g_i\|_{C^2(B_{r_{i+6}})}\leq C.$
Therefore,  by Theorem \ref{th:the-Apri-Hold}, 
$$
\| w_{i,h,\a} \|_{C^\a(B_{r_i+8}^+ )}\leq C .
$$
Hence, see  \cite[Lemma  5.6]{Cab-Caff}, we have that $u$ in $C^{2\a}$ in the direction of $e$ if $2\a<1$ while $u$ in $C^{0,1}$ in the direction of $e$ if $2\a>1$. Iterating this procedure a finite number of times, we will find an integer $n$ such  that $u \in C^{0,1}(B_{r_{n}})$   and 
$$
	\|\n_{x'}  u\|_{L^\infty(B_{r_{n}})}\leq C.
$$
By the same argument as above considering $\de_{x_i} u$, with $i=1,\dots,N-1$, in the place of $u$, we  will get the Lipschitz bound of  $\de_{x_i} u$. This then gives the result.
\end{proof}

\begin{proposition}\label{prop:fromNDto1D}
We consider  $u\in H^s_{loc}(\ov{\R^N_+})  $ satisfying
\begin{equation}
\label{eq:flat-eq-good-r}
	\Ds_{\ov{\R^N_+}}u=0\quad\text{in}\quad\R^N_+
	\end{equation}
and 
$$
\|u\|_{L^2(B_R)}^2\leq C R^{N+2\e} \qquad\textrm{ for all $R\geq 1$,}
$$ 
for some $C>0$ and $\e<\min(1, 2s)$.
Then $u$ is one dimension. More precisely,  it  is a function on $\R_+$.
	\end{proposition}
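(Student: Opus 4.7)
My plan is to combine the tangential translation invariance of $\Ds_{\ov{\R^N_+}}$ with the a priori gradient estimate of Proposition~\ref{prop:high-tangderiv}, applied after rescaling $u$. First I would verify that $u\in L^1(\R^N_+;(1+|x|^{N+2s})^{-1})$: splitting $\R^N_+$ into dyadic annuli $A_k=\{2^{k-1}\le|y|<2^k\}$ and using Cauchy--Schwarz, the growth bound \eqref{eq:grow-final-liouville} together with $\e<2s$ gives
\[
\int_{\R^N_+}\frac{|u(y)|}{1+|y|^{N+2s}}\,dy\le C\sum_{k\ge 0} 2^{-k(N+2s)}2^{kN/2}\|u\|_{L^2(B_{2^k})}\le C\sum_{k\ge 0} 2^{-k(2s-\e)}<\infty,
\]
so $u$ belongs to the class on which Proposition~\ref{prop:high-tangderiv} acts.

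Next, for $R\ge 1$ I would introduce $u_R(x):=R^{-\e}u(Rx)$. A direct change of variables in the weak formulation of \eqref{eq:flat-eq-good-r} shows that $\Ds_{\ov{\R^N_+}}u_R=0$ in $\R^N_+$; the $L^2$-growth bound yields $\|u_R\|_{L^2(B_2^+)}\le C$, and the dyadic argument above applied to $u_R$ gives
\[
\int_{\{|y|\ge 1/2\}\cap\R^N_+}\frac{|u_R(y)|}{|y|^{N+2s}}\,dy\le C,
\]
with constants independent of $R$. Proposition~\ref{prop:high-tangderiv} applied to $u_R$ controls $\n_{x'}^2 u_R$, but its proof iterates tangential incremental quotients and explicitly produces, as an intermediate step, the uniform first-order bound $\|\n_{x'}u_R\|_{L^\infty(B_1^+)}\le C$. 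Since $\n_{x'}u_R(x)=R^{1-\e}(\n_{x'}u)(Rx)$, scaling back yields
\[
\|\n_{x'}u\|_{L^\infty(B_R^+)}\le CR^{\e-1}\qquad\text{for every }R\ge 1,
\]
and letting $R\to\infty$, using $\e<1$, forces $\n_{x'}u\equiv 0$ on $\R^N_+$, so $u$ depends only on $x_N$.

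The rescaling and the dyadic tail estimates are routine, needing only $\e<2s$. The main technical point, and the hard part, is that one must extract the \emph{first}-order tangential bound from the proof of Proposition~\ref{prop:high-tangderiv} rather than its stated second-order conclusion: invoking only $\n_{x'}^2 u\equiv 0$ gives the structural form $u(x)=a(x_N)+b(x_N)\cdot x'$, and eliminating the affine-in-$x'$ piece from the $L^2$-growth hypothesis alone would require the strictly more restrictive $\e<1/2$, instead of the assumed $\e<1$.
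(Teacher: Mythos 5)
Your proof is correct and is essentially the paper's own argument: the paper likewise rescales, uses the tail estimate over dyadic annuli (requiring only $\e<2s$), and invokes the \emph{first}-order tangential gradient bound $\|\n_{x'}u\|_{L^\infty(B_R^+)}\le CR^{\e-1}$ coming from the proof of Proposition~\ref{prop:high-tangderiv} together with Theorem~\ref{th:the-Apri-Hold}, then lets $R\to\infty$ using $\e<1$. Your closing observation about needing the intermediate Lipschitz bound rather than the stated second-order conclusion is exactly how the paper uses that proposition.
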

	
	\begin{proof}
Since for all $R>0$, $u(\cdot R)$ solves \eqref{eq:flat-eq-good-r}, 	by Proposition \ref{prop:high-tangderiv} and Theorem \ref{th:the-Apri-Hold}, we have that 
	\begin{align*}
	\|\n_{x'} u\|_{L^\infty(B_R^+)}& \leq C R^{-1}\left(R^{-N/2}  \|u\|_{L^2(B_{2R}^+)}+R^{2s} \int_{\{|y|\geq R/2\}\cap \R^N_+}|u(y)|  |y|^{-N-2s}\, dy \right).
	\end{align*}
We now estimate, for $R>0$,	
	\begin{align*}
\int_{\{|y|\geq R\}\cap\R^N_+}\frac{|u(y)|}{|y|^{N+2s}}dy&\leq \sum_{i=0}^\infty  \int_{\{R 2^{i+1}\geq |y|\geq 2^iR\}\cap\R^N_+}\frac{|u(y)|}{|y|^{N+2s}}dy\\
&\leq   \sum_{i=0}^\infty (2^i R)^{-N-2s}\|u\|_{L^1(B_{2^{i+1}R})}\leq C \sum_{i=0}^\infty (2^i R)^{-N-2s} (2^{i+1}R)^{N+\e}\\
&\leq  C R^{-2s+\e}  \sum_{i=0}^\infty (2^i)^{-2s+\e} \leq C R^{-2s+\e}.
\end{align*}
We then deduce that $ \|\n_{x'} u\|_{L^\infty(B_R^+)}\leq  C R^{\e-1} $.
	Letting $R\to \infty$, we see that $\n_{x'} u=0$ on $\R^N_+$.
	\end{proof}
\subsection{The one dimensional problem }
This section is devoted to the classification of solutions  $u\in H^s_{loc}([0,\infty))\cap L^1_s(\R_+)$ satisfying some growth control and solving the equation
\be \label{eq:hal-to-study}
\Ds_{\ov{\R_+}} u=0 \qquad\textrm{ on $\R_+$,} 
\ee
where, here and in the following, we define
$$
L^1_s(A):=L^1(A;(1+|x|)^{-1-2s}) \qquad\textrm{ for $A$ a measurable subset of $\R$}.
$$
Let $v\in C^2_{loc}(0,\infty)\cap L^1_s(\R_+)$ and  define $\ti v=v 1_{(0,\infty)}$. 
A direct computation shows that, for $x\in \R_+$,   
\be \label{eq:def-ti-v}
\Ds \ti v (x)= \Ds_{{\R_+}}v(x)+  v(x)\int_{\R_-}\frac{c_{1,s}}{|x-y|^{1+2s}}\, dy= \Ds_{{\R_+}}\ti v(x)+ a_s x^{-2s} \ti v(x),
\ee
where $\Ds$ is the standard fractional Laplacian (see \eqref{eq:def-fracd-lap}) and  $a_s:=\frac{c_{1,s}}{2s} $. Indeed,  by a change of variable, we deduce that for $x\in \R_+$,
\be \label{eq:killing-meas-1D} 
\int_{\R_-}\frac{1}{|x-y|^{1+2s}}\,  dy= x^{-2s} \int^0_{-\infty} |1-r|^{-1-2s}\, dr=\frac{ x^{-2s}}{2s}.
\ee
As a consequence, if $u$ solves \eqref{eq:hal-to-study}  then  we find that  $\ti u=u 1_{(0,\infty)}$ solves 
\be \label{eq:ti-Hardy}
\Ds \ti u-a_s x^{-2s} \ti u=0 \qquad\textrm{ on $\R_+$.} 
\ee
The above discussion shows that  to study  \eqref{eq:hal-to-study}  we can consider \eqref{eq:ti-Hardy}. Now the study of the  latter problem leads  to the  study of optimal Hardy-type inequalities and allows  us to use the  local version of the problem via the  extension on the upper half-space $\R^2_+$.  
\\
We  define 
$$
\R^{2}_+:=\{z=(x,t)\in \R^{2}\,:\, t>0\}, \qquad \de  \R^{2}_+=:\R.
$$
We recall the Poisson kernel with respect to the extended operator $\textrm{div}(t^{1-2s}\cdot)$ on $\R^{2}_+$, see e.g. \cite{CSilv},  given by 
$$
P_{s}(z):=b_{s} \frac{ t^{2s}}{|z|^{1+2s}}=b_{s} \frac{ t^{2s}}{|(x,t)|^{1+2s}},
$$
where $b_s$ is a normalization constant  such that $\int_{\R}P_{s}(x,t)\, dx=1$ for all $t>0$.
We recall the following result \cite{CSilv}.
\begin{lemma}\label{lem:Caff-Energ}
Let $u \in L^1_s(\R)$ and define
$$
 w(x,t)=b_{s} t^{2s}\int_{\R} \frac{u(y)}{(|x-y|^2+ t^{2})^{(1+2s)/2}}\, dy .
$$
Then 
$$
\textrm{div}(t^{1-2s} \n w)=0 \qquad\textrm{ on $\R^{2}_+$.}
$$
\begin{itemize}
\item[$(i)$] If $u \in H^s_{loc}(\R)$,  then $w\in H^1_{loc}(\ov {\R^{2}_+};t^{1-2s} )$. Moreover for every $\Phi\in C^\infty_c( {\R^{2}} )$ we have 
$$
\int_{\R^{2}_+} t^{1-2s}\n w(x,t)\cdot \n \Phi(x,t) \, dxdt=\ov\k_s \frac{c_{1,s}}{2}\int_{\R^2}\frac{(u(x)-u(y))  (\Phi(x,0)-\Phi(y,0)) }{|x-y|^{1+2s}}\, dxdy,
$$
for some constant $\ov\k_s>0$.
\item[$(ii)$]
If   $u\in C(a,b)$ then 
$$
\lim_{t\to 0} w(x,t)=u(x) \qquad\textrm{ for all $x\in (a,b)$.}
$$
\item[$(iii)$] If $u\in C^{2}(a,b)$ then
$$
-\lim_{t\to 0} t^{1-2s}\de_t w(x,t)=\ov\k_s \Ds u(x)\qquad\textrm{ for all $x\in (a,b)$}.
$$
\end{itemize}

\end{lemma}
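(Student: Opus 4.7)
The plan is to recognize this lemma as the Caffarelli--Silvestre extension theorem (from \cite{CSilv}) with the Poisson kernel $P_s(x,t)=b_s t^{2s}|(x,t)|^{-1-2s}$, and to establish the four claims (the PDE and (i)--(iii)) in the natural logical order. I would first verify that $P_s$ itself solves $\operatorname{div}(t^{1-2s}\nabla P_s)=0$ on $\R^2_+$ by a direct computation: writing $P_s=b_s t^{2s}|(x,t)|^{-1-2s}$ and differentiating shows that the tangential and normal contributions cancel. Since $u\in L^1_s(\R)$ and the kernel decays uniformly on compact subsets of $\R^2_+$, differentiation under the integral sign is justified, so $\operatorname{div}(t^{1-2s}\nabla w)=0$ on $\R^2_+$.

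For (ii), I would use that $\{P_s(\cdot,t)\}_{t>0}$ is an approximate identity on $\R$: $P_s(x,t)=t^{-1}P_s(x/t,1)$, $\int_\R P_s(x,t)dx=1$, and $P_s(x,t)\to 0$ uniformly on $|x|\geq \rho$ as $t\to 0$. Hence if $u$ is continuous on $(a,b)$, splitting the convolution into a local piece (where we use uniform continuity on compact subsets) and a tail (controlled using $u\in L^1_s$ together with the uniform decay of $P_s$), we obtain $w(x,t)\to u(x)$ for every $x\in(a,b)$.

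For (i), the cleanest route is via the Fourier transform in the tangential variable $x$. Writing $\widehat{w}(\xi,t)=\widehat u(\xi)\varphi_s(t|\xi|)$, where $\varphi_s$ is the unique bounded solution on $(0,\infty)$ of the ODE $\varphi''+\frac{1-2s}{t}\varphi'-\varphi=0$ with $\varphi_s(0)=1$ (corresponding via Bessel functions to the Fourier transform of $P_s(\cdot,1)$), a direct computation using Plancherel gives
\[
\int_{\R^2_+} t^{1-2s}|\nabla w|^2\,dx\,dt = C_s \int_\R |\xi|^{2s}|\widehat u(\xi)|^2\,d\xi,
\]
and the right-hand side equals $\tfrac{\ov\k_s c_{1,s}}{2}[u]^2_{H^s(\R)}$ for an explicit constant $\ov\k_s>0$, recovering the classical $H^s$--$H^1(t^{1-2s})$ trace identity. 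The bilinear version in the statement then follows by polarization. One must argue locally since $u$ is only in $H^s_{loc}$: cutting off and estimating the contribution of $u1_{\R\setminus B_R}$ as a smooth function in the region $B_R\times(0,T)$ shows $w\in H^1_{loc}(\ov{\R^2_+};t^{1-2s})$, and the trace identity holds for $\Phi\in C^\infty_c(\R^2)$.

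Finally, for (iii), I would directly evaluate $-\lim_{t\to 0}t^{1-2s}\partial_t w(x,t)$. Differentiating under the integral sign yields
\[
t^{1-2s}\partial_t w(x,t)= b_s\!\int_\R \frac{2st^{2s}|x-y|^2-t^{2s+2}}{t^{2s}(|x-y|^2+t^2)^{(1+2s)/2+1}}u(y)\,dy,
\]
which after subtracting $u(x)$ inside the integral (using $\int_\R (\text{same kernel without }u)\,dy =0$) reveals a standard approximation of $(-\Delta)^s u(x)$ as $t\to 0$; for $u\in C^2(a,b)$, Taylor expansion around $x$ kills the principal value singularity on the local piece and the $L^1_s$ tail is controlled by dominated convergence. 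The constant is $\ov\k_s$, matching (i) as it must. The main obstacle is the delicate step in (i): making the Fourier/ODE identification rigorous, isolating the correct constant $\ov\k_s$ from the Bessel function normalization, and transferring the global Plancherel identity to the local statement allowed for $u\in H^s_{loc}$; a localization and density argument using cut-offs in the $x$-variable together with the explicit decay $|\nabla w|\lesssim t^{2s-1}$ away from the support of $u$ carries this through.
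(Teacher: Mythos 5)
Your proposal is correct and follows essentially the same route as the paper: the paper treats (ii) and (iii) as the well-known Caffarelli--Silvestre facts and proves (i) by exactly the cut-off decomposition $u=\chi_R u+(1-\chi_R)u$ that you describe, relying on the classical global energy/trace identity from \cite{CSilv} for the compactly supported piece. Your write-up merely supplies more detail (the Fourier/Bessel computation and the approximate-identity argument) for the steps the paper declares standard.
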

\begin{proof}
To check $(i)$, it suffices to write $u=\chi_R u+(1-\chi_R)u$ with $\chi_R\in C^\infty_c(\R)$ such that $\chi\equiv 1$ on $ (-2R,2R)$. Then one can easily see that   $w\in H^1((-R,R)\times (0,R)  ;t^{1-2s}  )$.\\
Now $(ii)$ and $(iii)$ are well known. 
\end{proof}
We recall the classification of the functions of the form $\o_\g:=x^{\g}1_{[0,\infty)}$ solving $\Ds_{\R_+} x^\g=0$ on $\R_+$, for $\g\geq 0$. Such functions are all known. Indeed for $\g\in (-1,2s)$, we have, see e.g. \cite{BD}, 
$$
\Ds_{\R_+} \o_\g=- \mu(\g,s) x^{-2s}\o_\g \qquad\textrm{ on $\R_+$},
$$
where 
$$
 \mu(\g,s)= c_{1,s}\int_{0}^1\frac{(t^\g-1)(1-t^{2s-1-\g})}{(1-t)^{1+2s}}\,dt.
$$
We  observe that the above integral is absolutely convergent and
\be\label{eq:def-mu-s-g}
  \mu(\g,s)=0 \qquad\textrm{ if and only if} \qquad \g\in \{0;2s-1\} .
\ee
We also notice, from \eqref{eq:def-ti-v},  that 
\begin{align}
\Ds  \o_\g=\left( a_{s} -\mu(\g,s)  \right)x^{-2s}\o_\g \qquad\textrm{ on $\R_+$}.
\end{align}
 Moreover,   by considering the extended function
 \be \label{eq:defw-gamma}
 w_\g(x,t):=b_{s} t^{2s}\int_{\R_+} \frac{\o_\g(y)}{(|x-y|^2+ t^{2})^{(1+2s)/2}}\, dy,
 \ee
 we get, thanks to Lemma \ref{lem:Caff-Energ},  
 \begin{align}\label{eq:eq-w-g}
\begin{cases}
\textrm{div}(t^{1-2s} \n w_\g)=0  & \qquad\textrm{ in $\R_+^2$}\\
-\lim_{t\to 0}t^{1-2s} \de_t w_\g=\ov \k_s \left( a_{s} -\mu(\g,s)  \right)  x^{-2s} w_\g & \qquad\textrm{ on $\R_+$}\\
w_\g=0 &\qquad\textrm{ on $ \R_-$}.
\end{cases}
\end{align}
 We recall that  $\o_{\frac{2s-1}{2}}$ is the "virtual" positive  ground  state leading to the sharp fractional Hardy inequality on the half-line. Indeed,  see e.g. \cite[Theorem 1.1]{BD}, for every $u\in C^\infty_c(\R_+)$, we have 
$$
\frac{c_{1,s}}{2} \int_{\R_+\times\R_+}\frac{(u(x)-u(y))^2}{|x-y|^{1+2s}}\,dxdy\geq  -\mu\left((2s-1)/{2},s\right)  \int_{\R_+}x^{-2s}u^2(x)\, dx.
$$
 The constant $-\mu(\frac{2s-1}{2},s) $ is optimal and it is positive for $2s\not=1$ and vanishes for $2s=1$.  
From this and \eqref{eq:killing-meas-1D}, we deduce the following  sharp  Hardy inequality  that, 
for every $u\in C^\infty_c(\R_+)$,   
 \be\label{eq:Hardy-ineq-R}
\frac{c_{1,s}}{2} \int_{\R\times\R}\frac{(u(x)-u(y))^2}{|x-y|^{1+2s}}\,dxdy\geq  \frac{\G\left( \frac{2s+1}{2}\right)^2}{\pi } \int_{\R}x^{-2s}u^2(x)\, dx.
 \ee
Here, we used that
\begin{align*}
\frac{c_{1,s}}{2} \int_{\R\times\R}\frac{(u(x)-u(y))^2}{|x-y|^{1+2s}}\,dxdy&= \frac{c_{1,s}}{2} \int_{\R_+\times\R_+}\frac{(u(x)-u(y))^2}{|x-y|^{1+2s}}\,dxdy\\
&+   {c_{1,s}} \int_{\R_+\times \R\setminus\R_+}\frac{(u(x)-u(y))^2}{|x-y|^{1+2s}}\,dxdy\\
&= \frac{c_{1,s}}{2} \int_{\R_+\times\R_+}\frac{(u(x)-u(y))^2}{|x-y|^{1+2s}}\,dxdy + a_{s} \int_{\R_+ }u^2(x)x^{-2s}\,dx
\end{align*}
and 
 see e.g. \cite[Page 630]{BD},  
 \be\label{eq:hardy-constants}
 \frac{\G\left( \frac{2s+1}{2}\right)^2}{\pi } =  a_{s} -\mu\left((2s-1)/{2},s\right)   .
 \ee
 By Lemma \ref{lem:Caff-Energ} and \eqref{eq:Hardy-ineq-R}, we find that, for all $w\in C^1_c(\R^2)$, with $w=0$ on $(-\infty,0]\times \{0\}$,  
 \be\label{eq:Hardy-Hal-extend}
 \int_{\R^2_+}|\n w(x,t)|^2 t^{1-2s} \,dxdt\geq \ov k_s  \frac{\G\left( \frac{2s+1}{2}\right)^2}{\pi } \int_{\R}x^{-2s}w(x,0)^2\, dx.
 \ee 
 Using polar coordinates, the above inequality yields a sharp inequality on  $S^1_+$. Indeed, let $\psi\in C^1[0,\pi]$ such that $\psi(\pi)=0$ and $f\in C^\infty_c(\R_+)$. Letting $w(r\cos(\th),r\sin(\th))=f(r)\psi(\th)$ in \eqref{eq:Hardy-Hal-extend}, we get 
 \begin{align*}
 \int_0^\infty (f'(r))^2r^{2-2s}\, dr\int_{0}^\pi \sin(\th)^{1-2s}\psi(\th)^2d\th&+ \int_0^\infty (f(r))^2r^{-2s}\, dr\int_{0}^\pi\sin(\th)^{1-2s}(\psi'(\th))^2d\th\\
 &\geq  \ov k_s  \frac{\G\left( \frac{2s+1}{2}\right)^2}{\pi } \int_0^\infty (f(r))^2r^{-2s}\, dr \psi(0)^2.
 \end{align*}
 This implies that 
\begin{align*}
\frac{ \int_0^\infty (f'(r))^2r^{2-2s}\, dr}{ \int_0^\infty (f(r))^2r^{-2s}\, dr   }\int_{0}^\pi \sin(\th)^{1-2s}\psi(\th)^2d\th+   \int_{0}^\pi\sin(\th)^{1-2s}(\psi'(\th))^2 \geq  \ov k_s \frac{\G\left( \frac{2s+1}{2}\right)^2}{\pi } \psi(0)^2.
 \end{align*} 
 Using the classical optimal Hardy inequality, we obtain that $$\inf_{f\in C^\infty_c(\R_+)} \frac{ \int_0^\infty (f'(r))^2r^{2-2s}\, dr}{ \int_0^\infty (f(r))^2r^{-2s}\, dr   }=\frac{(2s-1)^2}{4}.$$ We then deduce  immediately that 
 \begin{align}\label{eq:tra-Sph}
\frac{(2s-1)^2}{4}  \int_{0}^\pi \sin(\th)^{1-2s}\psi(\th)^2d\th+   \int_{0}^\pi\sin(\th)^{1-2s}(\psi'(\th))^2d\th \geq  \ov k_s \frac{\G\left( \frac{2s+1}{2}\right)^2}{\pi } \psi(0)^2.
 \end{align}
 We observe  the following crucial fact that for $2s\not=1$,   
\be \label{eq:asHard}
a_{s}< \frac{\G\left( \frac{2s+1}{2}\right)^2}{\pi },
\ee
which follows from \eqref{eq:hardy-constants} and the fact that  $-\mu\left((2s-1)/{2},s\right)>0$ for $2s\not=1$.
%
 We  consider the eigenvalue problem on $H^{1}((0,\pi);  \sin(\th) ^{1-2s} )$ given by 
\begin{align}\label{eq:11}
\begin{cases}
-  (  \sin(\th) ^{1-2s} \psi'  )'+\frac{(1-2s)^2}{4}\sin(\th) ^{1-2s} \psi =\l  \sin(\th) ^{1-2s} \psi& \qquad\textrm{ in $(0,\pi)$}\\
 -\lim_{\th\to 0}\sin(\th) ^{1-2s} \psi'(\th)= \ov \k_s a_{s}   \psi (0)&   \\
  \psi(\pi) = 0.&   
\end{cases}
\end{align}
 By  \eqref{eq:asHard} and \eqref{eq:tra-Sph}, for $2s\not=1$, it possesses a sequence of increasing eigenvalue $0< \l_1(s)< \l_2(s)\leq,\dots$, with corresponding   eigenfunctions $\psi_k$, normalized as
$$
\int_{ 0}^{\pi} \sin(\th)^{1-2s} \psi^2_k(\th)\, d\th=1.
$$
%
We have the following result.
\begin{lemma}\label{lem:lambda-2}
Suppose that $s\in (0,1)$. Then the following statements hold. 
\begin{itemize}
\item[$(i)$] If  $2s\not= 1$, then  $\l_{1}(s)=\frac{(2s-1)^2}{4}$.
\item[$(ii)$] If $2s\not= 1$ then $\frac{2s-1}{2} + \sqrt{\l_{2}(s)}\geq 2s$.
\item[$(iii)$] If $2s=1$, then   \eqref{eq:11} has a sequence of eigenvalues $0= \l_1(s)< \l_2(s)\leq,\dots$, with  $\psi_1(\th)=\frac{3}{\pi^3}(\pi-\th)$ and $\sqrt{\l_2(s)}>1$.  
\end{itemize}
\end{lemma}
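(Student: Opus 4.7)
The plan is to treat the three parts with three different tools: an explicit first eigenfunction plus a Piccone-type identity for (i), a Dirichlet--Robin interlacing argument for (ii), and a direct analysis of a transcendental equation for (iii).

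For (i), observe that at $\lambda = (2s-1)^2/4$ the ODE in \eqref{eq:11} collapses to $(\sin^{1-2s}\psi')' = 0$, since the zeroth-order terms cancel. Integrating twice and imposing $\psi(\pi)=0$ yields the candidate
$$\psi_0(\theta) = C\int_\theta^\pi \sin^{2s-1}(\tau)\,d\tau, \qquad C \neq 0,$$
which is of one sign on $(0,\pi)$. To check the Robin condition at $\theta=0$, I identify $\psi_0$ with the degree-zero-homogeneous Poisson extension $w_0(r,\theta) = \psi_0(\theta)$ of $\omega_0 = 1_{\R_+}$ given by \eqref{eq:defw-gamma}: since $\mu(0,s)=0$, \eqref{eq:eq-w-g} with $\gamma=0$ translates, in polar coordinates and after the Caffarelli--Silvestre limit of Lemma \ref{lem:Caff-Energ}$(iii)$, into exactly $-\lim_{\theta\to 0}\sin^{1-2s}\psi_0'(\theta)=\overline\kappa_s a_s\psi_0(0)$. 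With a positive eigenfunction in hand, the substitution $\psi = \psi_0 \phi$ and one integration by parts (using $\sin^{1-2s}\psi_0' \equiv -\overline\kappa_s a_s \psi_0(0)$) collapses the boundary terms and gives the Piccone-type identity
$$B[\psi,\psi] = \int_0^\pi \sin^{1-2s}\psi_0^2(\phi')^2\,d\theta + \frac{(2s-1)^2}{4}\int_0^\pi \sin^{1-2s}\psi^2\,d\theta,$$
whence $B[\psi,\psi] \geq \frac{(2s-1)^2}{4}\|\psi\|^2_w$, with equality precisely when $\phi$ is constant. This simultaneously gives $\lambda_1 = (2s-1)^2/4$ and the simplicity of the first eigenspace.

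For (ii), I compare \eqref{eq:11} to the Dirichlet problem obtained by additionally imposing $\psi(0)=0$. Writing $H^1_R = \{\psi\in H^1((0,\pi);\sin^{1-2s}):\psi(\pi)=0\}$ and $H^1_D = \{\psi\in H^1_R:\psi(0)=0\}$, the boundary term $\overline\kappa_s a_s\psi(0)^2$ in $B[\cdot,\cdot]$ vanishes on $H^1_D$. Since for any two-dimensional $V\subset H^1_R$ the subspace $\{\psi\in V:\psi(0)=0\}$ is at least one-dimensional, the standard min--max argument yields the interlacing $\lambda_2(s)\geq \lambda_1^{\mathrm{Dir}}(s)$. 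Now $\psi^D(\theta)=\sin^{2s}\theta$ is positive on $(0,\pi)$ and vanishes at the endpoints; a direct computation gives $\sin^{1-2s}(\psi^D)'=2s\cos\theta$, hence $-(\sin^{1-2s}(\psi^D)')' = 2s\sin\theta$, so
$$-(\sin^{1-2s}(\psi^D)')' + \tfrac{(2s-1)^2}{4}\sin^{1-2s}\psi^D = \Bigl(2s+\tfrac{(2s-1)^2}{4}\Bigr)\sin\theta = \tfrac{(2s+1)^2}{4}\,\sin^{1-2s}\psi^D.$$
Positivity forces $\psi^D$ to be a first Dirichlet eigenfunction, so $\lambda_1^{\mathrm{Dir}}(s)=(2s+1)^2/4$. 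Combining, $\sqrt{\lambda_2(s)}\geq(2s+1)/2$, i.e.\ $\tfrac{2s-1}{2}+\sqrt{\lambda_2(s)}\geq 2s$.

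For (iii), at $2s=1$ the weight is trivial and the problem reduces to $-\psi''=\lambda\psi$ on $(0,\pi)$ with $\psi(\pi)=0$ and $-\psi'(0)=\overline\kappa_{1/2}a_{1/2}\,\psi(0)$; the identity derived in (i) (namely $\overline\kappa_s a_s \int_0^\pi \sin^{2s-1}d\tau = 1$) specializes to $\overline\kappa_{1/2}a_{1/2}=1/\pi$. One verifies directly that $\psi_1(\theta)\propto(\pi-\theta)$ satisfies both boundary conditions with $\lambda_1=0$. For $\lambda=\mu^2>0$, writing $\psi = A\cos(\mu\theta)+B\sin(\mu\theta)$ and imposing both boundary conditions forces $A=-\pi\mu B$ and hence $\tan(\mu\pi)=\mu\pi$; the analogous negative case $\lambda=-\mu^2$ leads to $\tanh(\mu\pi)=\mu\pi$, which has no positive root since $\tanh x<x$ on $(0,\infty)$. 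Thus $\lambda_2$ is $\mu^2$ for $\mu=\mu_2$ the smallest positive root of $\tan x = x$ divided by $\pi$. Because $\tan x > x$ on $(0,\pi/2)$, $\tan x \leq 0 < x$ on $[\pi/2,\pi]$, and $\tan x$ sweeps from $0$ to $+\infty$ on $(\pi,3\pi/2)$, the intermediate value theorem places the first positive root of $\tan x = x$ strictly in $(\pi,3\pi/2)$; hence $\sqrt{\lambda_2(s)}=\mu_2>1$. The main obstacle is the identification of $\psi_0$ with the Poisson extension $w_0$ of $1_{\R_+}$ and the attendant verification of the Robin boundary condition through the Caffarelli--Silvestre limit; once this is in place the rest is ODE bookkeeping.
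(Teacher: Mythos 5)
Your proof is correct. Parts (i) and (iii) follow essentially the same route as the paper: in (i) both arguments rest on the explicit one-signed solution $\psi_0(\theta)=C\int_\theta^\pi\sin^{2s-1}(\tau)\,d\tau$ (the paper obtains it as the angular part of the Poisson extension $w_0$ of $1_{\R_+}$ and computes $\widehat{w_0}{}'=-b_s\sin^{2s-1}$; you recover the same profile by integrating the collapsed ODE and use the extension only to verify the Robin constant, adding a ground-state/Piccone identity where the paper simply invokes the fact that a one-signed eigenfunction must be the first), and (iii) is the identical explicit computation leading to $\tan(\sqrt{\lambda}\,\pi)=\sqrt{\lambda}\,\pi$, with your exclusion of negative eigenvalues via $\tanh$ being a small additional check. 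Part (ii), however, is genuinely different. The paper argues by contradiction through the Caffarelli--Silvestre extension: if $q_+=\frac{2s-1}{2}+\sqrt{\lambda_2(s)}<2s$, the homogeneous function $r^{q_+}\psi_2(\theta)$ must coincide with $w_{q_+}\psi_2(0)$ by a Liouville theorem for $\mathrm{div}(|t|^{1-2s}\nabla\cdot)$ in $\R^2$, which forces $\mu(q_+,s)=0$, hence $q_+\in\{0,2s-1\}$ and $\lambda_2=\lambda_1$, a contradiction. You instead use Dirichlet--Robin interlacing ($\lambda_2(s)\ge\lambda_1^{\mathrm{Dir}}(s)$ by the codimension-one min--max argument, since the Robin boundary term drops on $\{\psi(0)=0\}$) together with the explicit Dirichlet ground state $\sin^{2s}\theta$ --- the angular part of the degenerate-harmonic function $t^{2s}$ --- whose eigenvalue $(2s+1)^2/4$ gives exactly $\sqrt{\lambda_2(s)}\ge\frac{2s+1}{2}$, i.e.\ the claimed inequality. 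Your route is more elementary and self-contained (no Liouville theorem, no uniqueness of the extension with sub-$2s$ growth, no appeal to the zero set of $\mu(\cdot,s)$), at the cost of having to justify the standard Sturm--Liouville facts (trace continuity at $\theta=0$, discreteness, one-signedness of the Dirichlet ground state) for the degenerate weight $\sin^{1-2s}\theta$; all of these are available from the setup preceding the lemma, and both arguments yield the same bound.
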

\begin{proof}
To prove $(i)$, we consider $w_\g$ defined by \eqref{eq:defw-gamma}, with $\g=0$.   
It is clear that $w_0(z)=w_0(z/|z|)$ and letting $\widehat{w_0}(\theta)=w_0(\cos(\th),\sin(\th))$,  then $ \widehat{w_0}$  solves  \eqref{eq:11}, with $\l=\frac{(2s-1)^2}{4}$,   pointwise, thanks to \eqref{eq:eq-w-g}. Moreover,  by a change of variable
$$
\widehat{w_0}(\theta)= b_{s}\int_{-\cot(\th)}^\infty\frac{1}{(1+t^2)^{\frac{2s+1}{2}}}\, dt.
$$
From this, we find that $\widehat{w_0}'(\theta)=-b_s(\sin(\th))^{2s-1}$. This implies that  $w_0\in H^{1}((0,\pi);  \sin(\th) ^{1-2s} )$, because $w_0\in L^\infty(0,\pi)$. Since $w_0$ does not change sign we deduce that $\l_1(s)=\frac{(2s-1)^2}{4}$.  \\

To prove $(ii)$, we suppose on the contrary  that $q_+= \frac{2s-1}{2} + \sqrt{\l_{2}(s)}<2s$. Then consider the function 
$$
U(r\cos(\th), r\sin(\th ))=r^{q_+}\psi_2(\th) .
$$
Note that $q_+>0$,  since $\frac{(2s-1)^2}{4}=\l_1(s)<\l_2(s)$.  By direct computations, we have that $U$ solves
\begin{align}\label{eq:U-dplus}
\begin{cases}
\textrm{div}(t^{1-2s} \n U)=0  & \qquad\textrm{ in $\R_+^2$}\\
-t^{1-2s} \n U\cdot e_2= \ov\k_s a_s  x^{-2s} U & \qquad\textrm{ on $\R_+$}\\
U=0 &\qquad\textrm{ on $ \R_-$}.
\end{cases}
\end{align} 
We observe that  $U\big|_{\R}= \o_{q_+}\psi_2(0)  $. Recall that the extension of $\o_{q_+}$ is given by $w_{q_+}$ which solves \eqref{eq:eq-w-g}.  Since also $q_+>\frac{2s-1}{2} $, we have that $ U,w_{q_+}\in H^1_{loc}(\ov{\R^2_+}; t^{1-2s})$.  It follows that $V:= U-w_{q_+}\psi_2(0)    \in C^0(\ov{\R^2_+})$  solves 
\begin{align}
\begin{cases}
\textrm{div}(t^{1-2s} \n V)=0  & \qquad\textrm{ in $\R_+^2$}\\
%
%
V=0 &\qquad\textrm{ on $ \R$}.
\end{cases}
\end{align} 
We then consider the odd reflection (in the $t$-direction) of $V$ denoted by $\ti V\in H^1_{loc}({\R^2}; |t|^{1-2s})$. It solves, weakly, 
\begin{align}
\textrm{div}(|t|^{1-2s} \n \ti V)=0  & \qquad\textrm{ in $\R^2.$}
%
%
\end{align} 
Since by our  assumption the growth of  $V$  is of order  $q_+<2$, it then follows from a well  known Liouville theorem that $\ti V(x,t)=a t|t|^{2s-1}+b+ c x$, for some constants $a,b,c\in \R$, see e.g. \cite[Lemma 2.7]{CSS}. Since  $q_+<2s$,  the  continuity and oddness with respect to $t$ of  $\ti V$    imply that  $\ti V=V\equiv 0$ on $\ov{\R^2_+}$. Hence $U=w_{q_+}\psi_2(0)   $ on $\ov{\R^2_+}$.  Now from \eqref{eq:U-dplus} and the definition of $w_{q_+}$, we deduce that 
$$
\psi_2(0) \Ds \o_{q_+}  =\Ds (U\big|_{\R})=a_{s} x^{-2s} U\big|_{\R}= (a_{s} -\mu(q_+,s) )  x^{-2s} \o_{q_+}\psi_2(0) \quad \textrm{ on $\R_+$.}
$$
We then get  $\mu(q_+,s)=0$ and hence thanks to \eqref{eq:def-mu-s-g},  either $q_+=0$ or $q_+= 2s-1$. We now check that this leads to a contradiction. Indeed, each of these cases implies that    $\sqrt{\l_2(s)}=\pm\frac{2s-1}{2}$  so that $\l_2(s)=\l_1(s)$ which is not possible. \\
 
 Now for $2s=1$, we first notice that the explicit  solution to the first equation in \eqref{eq:11}, for $\l=0$,  are given by the affine  functions. Note, in this case,  that $\ov\k_{1/2}=1$ and $a_{1/2}=\frac{1}{\pi}$. Therefore a positive solution is given by $\pi-\th$. Hence, with the $L^2$-normalization, we find that $\psi_1(\th)=\frac{3}{\pi^3}(\pi-\th)$.\\
  Next, for $\l>0$, looking at eigenfunctions of the form $\psi(\th)=a\cos(\sqrt{\l}\th)+b\sin(\sqrt{\l}\th)$, then using the boundary conditions, we immediately see that $\tan(\sqrt{\l}\pi)=\sqrt{\l}\pi$. This equations has a discrete solutions $\l_2(s)<\l_3(s)<\dots$, with $\sqrt{\l_2(s)}\pi>\pi$. We thus conclude that $\l_2(s)>1$. 
 
\end{proof}
%
%
The following result shows that for  $u\in H^s_{loc}(\ov{\R_+}) \cap L^1_s(\R_+)$ solving \eqref{eq:hal-to-study} 
then $u-u(0)\in L^2((0,R);x^{-2s}) $ for all $R>0$.  With this property, we immediately deduce that $(u-u(0))1_{[0,\infty)}\in H^s_{loc}(\R) $, thanks to  \eqref{eq:def-calHs0}. This will be useful in order to use Lemma \ref{lem:Caff-Energ}.

\begin{lemma}\label{lem:barr}
Let $s\in (0,1)$ and  $u\in H^s_{loc}(\ov{\R_+})  \cap L^1_s(\R_+)$  be a solution to 
$$
\Ds_{\ov{\R_+}} u= 0\qquad\textrm{ on $\R_+$}.
$$
	Then $v:=(u-u(0))1_{[0,\infty)}\in H^s_{loc}(\R)$.
\end{lemma}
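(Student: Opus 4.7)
By the characterization \eqref{eq:def-calHs0}, and since $v = \tilde v \cdot 1_{[0,\infty)}$ vanishes on $\mathbb{R}_-$ (where $\tilde v := u - u(0)$), proving $v \in H^s_{loc}(\mathbb{R})$ reduces to showing that for every $\chi \in C^\infty_c([0,\infty))$ the function $\chi \tilde v$ lies in $H^s_0(\mathbb{R}_+)$ and satisfies the Hardy integrability $\int_0^\infty (\chi \tilde v)^2\, x^{-2s}\, dx < \infty$. The plan is to first invoke Theorem \ref{th:the-Apri-Hold} in dimension one to secure H\"older continuity of $u$ up to the boundary. This defines $u(0) = \lim_{x \downarrow 0} u(x)$, gives $\tilde v(0) = 0$, and yields a pointwise bound $|\tilde v(x)| \leq C x^{\alpha_0}$ near zero for some $\alpha_0 = \alpha_0(s) > 0$; in particular, $\chi \tilde v$ is continuous, compactly supported and in $H^s(\mathbb{R}_+)$.

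For $s \leq 1/2$ the H\"older bound already gives the Hardy integrability, since $\int_0^R x^{2\alpha_0 - 2s}\, dx < \infty$ (because $2\alpha_0 > 0 \geq 2s - 1$). The inclusion $\chi \tilde v \in H^s_0(\mathbb{R}_+)$ is automatic for $s < 1/2$, while for $s = 1/2$ it follows from the Lions--Magenes characterization combined with the Hardy integrability just obtained.

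For $s > 1/2$ the a priori H\"older exponent $\alpha_0$ need not exceed $s - 1/2$, and here lies the main difficulty. The plan is to bypass pointwise bounds through an energy identity: testing the weak equation $\mathcal{C}_{s,\mathbb{R}_+}(u,\varphi)=0$ with the admissible choice $\varphi := \chi^2 \tilde v$ (admissible because $\varphi$ is in $H^s$ with compact support in $\overline{\mathbb{R}_+}$) and using the standard algebraic identity
\[
(\tilde v(x) - \tilde v(y))\bigl(\chi^2(x)\tilde v(x) - \chi^2(y)\tilde v(y)\bigr) = \bigl(\chi(x)\tilde v(x) - \chi(y)\tilde v(y)\bigr)^2 - (\chi(x) - \chi(y))^2 \tilde v(x)\tilde v(y),
\]
I would obtain
\[
[\chi \tilde v]^2_{H^s(\mathbb{R}_+)} = \int_{\mathbb{R}_+\times\mathbb{R}_+} \frac{(\chi(x) - \chi(y))^2 \tilde v(x) \tilde v(y)}{|x-y|^{1+2s}}\, dx\, dy,
\]
whose right-hand side is finite by the Lipschitz bound on $\chi$ and the local $L^2$-integrability of $\tilde v$. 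Combining the sharp Hardy inequality \eqref{eq:Hardy-ineq-R} with the identity \eqref{eq:hardy-constants} and the strict bound \eqref{eq:asHard}, I would then derive the regional Hardy inequality
\[
\frac{c_{1,s}}{2} [f]^2_{H^s(\mathbb{R}_+)} \geq -\mu\!\left(\tfrac{2s-1}{2}, s\right) \int_{\mathbb{R}_+} f^2 x^{-2s}\, dx, \qquad f \in C^\infty_c(\mathbb{R}_+),
\]
whose right-hand constant is strictly positive for $s \neq 1/2$. A density/Fatou argument extends this to $H^s_0(\mathbb{R}_+)$, and applying it to $f = \chi \tilde v$ (which belongs to $H^s_0(\mathbb{R}_+)$ thanks to $\tilde v(0)=0$ in the trace sense, since $s>1/2$) delivers the Hardy integrability.

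The main obstacle is precisely the case $s > 1/2$: the a priori H\"older regularity of Theorem \ref{th:the-Apri-Hold} is too weak to yield the Hardy integrability directly, so one must close the gap through the energy identity extracted from the weak equation together with the regional Hardy inequality. The positivity of the Hardy constant for $s \neq 1/2$, which ultimately rests on the strict bound \eqref{eq:asHard}, is the algebraic heart of the argument; the remaining technicalities are the admissibility of $\chi^2 \tilde v$ as a test function and the density extension of Hardy from $C^\infty_c(\mathbb{R}_+)$ to $H^s_0(\mathbb{R}_+)$.
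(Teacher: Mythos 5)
Your proposal is correct, but in the key case $2s>1$ it takes a genuinely different route from the paper. Both arguments start the same way: reduce via \eqref{eq:def-calHs0} to the Hardy integrability $\int_0^1 x^{-2s}v^2\,dx<\infty$, use Theorem \ref{th:the-Apri-Hold} to get continuity up to the boundary and $v(0)=0$, and dispose of $2s\le 1$ by the H\"older bound alone. For $2s>1$ the paper does \emph{not} invoke the trace characterization of $H^s_0$: it truncates, setting $v_k=\frac1k G(kv)$ with $G$ vanishing near $0$, so that $\vp v_k$ is compactly supported in $(0,1)$ (because $v$ is continuous with $v(0)=0$) and hence lies in $H^s_0(0,1)$ for free; Hardy's inequality then gives a bound uniform in $k$, and Fatou's lemma passes to the limit. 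You instead put $\chi\tilde v$ directly into $H^s_0(\R_+)$ by appealing to the identification, for $s\in(1/2,1)$, of $H^s_0$ with the zero-trace subspace of $H^s$ (Lions--Magenes/Grisvard), using that the pointwise value $\tilde v(0)=0$ coincides with the Sobolev trace for a function continuous up to the boundary. That step is correct and citable, so your argument closes; the paper's truncation simply buys independence from that characterization at the cost of a Fatou passage. Two smaller remarks: your Caccioppoli-type energy identity is superfluous, since $u\in H^s_{loc}(\ov{\R_+})$ by hypothesis already means $\chi\tilde v\in H^s(\R_+)$ for every cutoff $\chi$ (this is exactly the paper's definition of $H^s_{loc}(\ov E)$), so the "main obstacle" you identify is not the finiteness of the local seminorm but precisely the $H^s_0$ membership; and the regional Hardy inequality you re-derive from \eqref{eq:Hardy-ineq-R} and \eqref{eq:hardy-constants} is already stated in the paper as the quoted form of \cite[Theorem 1.1]{BD}, which is also the reference the paper's proof uses.
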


\begin{proof}
In view of \eqref{eq:def-calHs0}, it suffices to prove that $\int_{0}^1x^{-2s}v^2(x)\, dx<\infty$. In  the case $2s\leq1$, this follows immediately from Theorem \ref{th:the-Apri-Hold}. Next, we consider the case $2s>1$. Let  $G\in C^1(\R)$ be  given by $G(t)=0$ for $|t|<1$ and  $G(t)=t$  for $|t|\geq 2$. We define $v_k(x)=\frac{1}{k}G(k v(x))$.  By Theorem \ref{th:the-Apri-Hold}, we have that $v\in C([0,1))$ and $v(0)=0$. Therefore  $\vp v_k\in C_c(0,1)\cap H^s(0,1)\subset H^s_0(0,1)$ for all $\vp \in C^\infty_c([0,1))$. 
 By the Hardy inequality, see \cite[Theorem 1.4.4.3]{Grisv} or \cite[Theorem 1.1]{BD}, we have 
$$
\int_0^1x^{-2s}(\vp v_k)^2(x)\, dx\leq C[(\vp v_k)]_{H^s(0,1)}^2\leq C( [ v_k ]_{H^s(0,1)}^2+1)\leq C( [ v ]_{H^s(0,1)}^2  +1).
$$
By  Fatou's lemma, we deduce that $\int_{0}^1x^{-2s}v^2(x)\, dx<\infty$ as desired.
\end{proof}

\begin{proposition}\label{prop:classif-1D-2s-not-1}
Let $u\in H^s_{loc}(\ov{\R_+})  $ and  $w\in H^s_{loc}({\R})\cap C^0(\R)   $,  be such that 
\begin{equation*}
	\Ds_{\ov{\R_+}}u=0\quad\text{in}\quad \R_+,
	\end{equation*}
	\begin{equation*}\label{eq:flat-eq-good-r-Dir}
	\Ds_{{\R_+}}w=0\quad\text{in}\quad \R_+, \qquad w=0 \qquad\textrm{on $(-\infty,0]$} 
	\end{equation*}
and,  for some constants $C>0$ and $\e\in (0,2s)$, 
\be \label{eq:growth-final-Li}
\|u\|_{L^2(-R,R)}^2\leq C R^{1+2\e} \qquad\textrm{ for all $R\geq 1$}
\ee
and 
$$
\|w\|_{L^2(-R,R)}^2\leq C R^{1+2\e} \qquad\textrm{ for all $R\geq 1$}.
$$
\begin{itemize}
\item[(i)] If $s\in (0,1)$ then $u$ is a constant function on $\R_+$.
\item[(ii)] If $2s>1$ then  $w(x)=b  x^{2s-1}$ for all $x\in \R_+$, for some constant $b\in \R$.

\end{itemize}
\end{proposition}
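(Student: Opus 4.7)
The plan is to apply the Caffarelli--Silvestre extension (Lemma \ref{lem:Caff-Energ}) to both $u$ and $w$, recasting them as solutions on the half-plane $\R^2_+$ of a degenerate elliptic equation with mixed boundary data, and then to classify those extensions via separation of variables using the Sturm--Liouville problem \eqref{eq:11} and the spectral information in Lemma \ref{lem:lambda-2}.

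For part~(i), Theorem \ref{th:the-Apri-Hold} gives H\"older continuity of $u$ up to $0$, so $u(0)$ is well defined; since constants are weak Neumann solutions I may replace $u$ by $u-u(0)$ and assume $u(0)=0$, in which case Lemma \ref{lem:barr} places $\tilde u:=u\,1_{[0,\infty)}$ in $H^s_{loc}(\R)$. For part~(ii), $w\in H^s_{loc}(\R)$ is given with $w=0$ on $\R_-$ and $w(0)=0$ by continuity. In both cases let $V$ be the Caffarelli--Silvestre extension, which by Lemma \ref{lem:Caff-Energ}(i) lies in $H^1_{loc}(\overline{\R^2_+};t^{1-2s})$ and satisfies $\mathrm{div}(t^{1-2s}\nabla V)=0$ in $\R^2_+$. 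Applying the identity of Lemma \ref{lem:Caff-Energ}(i) to $\Phi\in C^\infty_c(\R^2)$ with $\Phi(\cdot,0)$ supported in $\R_+$ (so $\Phi(\cdot,0)|_{\R_+}\in C^1_c(\R_+)$, admissible for both the Neumann and the Dirichlet weak formulations) and splitting the integration into the four quadrants $\R_\pm\times\R_\pm$: the $(\R_+,\R_+)$ block is killed by the hypothesis $\calC_{s,\R_+}(u,\Phi(\cdot,0)|_{\R_+})=0$, the two cross-terms produce the potential $\ov\kappa_s a_s x^{-2s}V(\cdot,0)$, and Lemma \ref{lem:Caff-Energ}(ii) gives $V=0$ on $\R_-$. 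Thus $V$ solves the mixed boundary value problem
\[
V=0 \text{ on } \R_-,\qquad -\lim_{t\to 0^+}t^{1-2s}\partial_t V=\ov\kappa_s a_s x^{-2s} V \text{ on } \R_+.
\]

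Next I analyze the admissible modes. Separating variables in polar coordinates reduces the PDE to the indicial relation $(q-\tfrac{2s-1}{2})^2=\lambda_k(s)$ coming from \eqref{eq:11}, so each pure mode has the form $r^{q_k^\pm}\psi_k(\theta)$ with $q_k^\pm=\tfrac{2s-1}{2}\pm\sqrt{\lambda_k(s)}$. The requirement $V\in H^1_{loc}(\overline{\R^2_+};t^{1-2s})$ forces the exponent $q>s-\tfrac12$ mode-by-mode (so that $\int_0 r^{2q-2s}\,dr<\infty$ near the origin), which kills every $q_k^-$: indeed $q_1^-=\min(0,2s-1)\le s-\tfrac12$, while for $k\ge 2$ the symmetry $q_k^++q_k^-=2s-1$ together with $q_k^+\ge 2s$ (Lemma \ref{lem:lambda-2}(ii)) gives $q_k^-\le -1$. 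A weighted Caccioppoli bound transfers the growth hypothesis $\|u\|_{L^2(-R,R)}\le CR^{1/2+\epsilon}$ into an $L^2$-growth on $V$ of order $R^{q_k^++1}$ for each surviving mode $\alpha_k r^{q_k^+}\psi_k$, forcing $q_k^+\le\epsilon<2s$; another appeal to Lemma \ref{lem:lambda-2}(ii) excludes every $k\ge 2$, leaving $V=\alpha_1 r^{q_1^+}\psi_1(\theta)$.

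To conclude: if $2s<1$ then $q_1^+=0$, so the trace $u(x)=\alpha_1\psi_1(0)$ is constant on $\R_+$; since $\psi_1(0)\ne 0$ (otherwise the ODE together with the Robin condition in \eqref{eq:11} would force $\psi_1\equiv 0$), the hypothesis $u(0)=0$ yields $\alpha_1=0$. If $2s>1$ then $q_1^+=2s-1$ and the trace reads $u(x)=c\,x^{2s-1}$ on $\R_+$ with $c=\alpha_1\psi_1(0)$: in part~(ii) this is exactly the claim $w(x)=b\,x^{2s-1}$, and the Dirichlet condition $w(0)=0$ is automatic; in part~(i) I kill $c$ by testing the Neumann weak formulation against $\varphi\in C^1_c(\overline{\R_+})$ with $\varphi(0)\ne 0$---a test function \emph{not} admissible in the derivation of the BVP above---using the integration-by-parts identity for $\Ds_{\R_+}$ from~\cite{Guan} together with $\Ds_{\R_+}x^{2s-1}\equiv 0$ (by~\eqref{eq:def-mu-s-g}) and $N_s(x^{2s-1})(0)=1$: the Neumann residue equals $\kappa_s\,c\,\varphi(0)$ with $\kappa_s\ne 0$, forcing $c=0$. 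The borderline case $2s=1$ in part~(i) is handled in the same framework using Lemma \ref{lem:lambda-2}(iii) and the explicit eigenfunction $\psi_1\propto\pi-\theta$. The principal obstacle I anticipate is the quantitative passage from the $L^2$-growth of $\tilde u$ on $(-R,R)$ to a weighted $L^2$-growth of $V$ on $B_R^+$, uniform across dyadic annuli and precise enough to rule out every $k\ge 2$ mode; the secondary delicate step is the ``$x^{2s-1}$ fails Neumann'' computation in part~(i), which is the sole point where parts~(i) and~(ii) genuinely diverge.
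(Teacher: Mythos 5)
Your proposal is correct and follows essentially the same route as the paper's proof: subtract $u(0)$ and invoke Lemma \ref{lem:barr}, pass to the Caffarelli--Silvestre extension, expand in the eigenbasis of \eqref{eq:11}, discard the singular exponents by weighted local integrability (the paper uses the Hardy inequality for the extension) and the supercritical exponents by the growth bound via Lemma \ref{lem:lambda-2}, and finally eliminate the residual $x^{2s-1}$ mode in the Neumann case by the fact that $\Ds_{\ov{\R_+}}x^{2s-1}\neq 0$, which is exactly your Neumann-residue computation via the integration-by-parts formula of \cite{Guan}.
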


\begin{proof}
By Lemma  \ref{lem:barr}, the function  $v=(u-u(0))1_{[0,\infty)}\in H^{s}_{loc}(\R)$.  We also have that (see \eqref{eq:def-ti-v}) 
\begin{equation*}
	\Ds_{{\R_+}}v=\Ds v-a_{s} x^{-2s} v=0\quad\text{ in }\quad \R_+.
	\end{equation*}
We then  define 
$V(x,t)=(P(t,\cdot)\star v)(x)$. By Lemma \ref{lem:Caff-Energ}, we have  $V\in H^1_{loc}(\ov{\R^2_+}; t^{1-2s})$. Moreover,
\begin{align}\label{eq:Usolves-}
\begin{cases}
\textrm{div}(t^{1-2s} \n V)=0  & \qquad\textrm{ in $\R_+^2$}\\
-t^{1-2s} \n V\cdot e_2= \ov\k_s a_s  x^{-2s} V & \qquad\textrm{ on $\R_+$}\\
V=0 &\qquad\textrm{ on $\R_-$}.
\end{cases}
\end{align} 	
We consider $(\psi_k)_k$, the complete sequence of  eigenfunction corresponding to the eigenvalue problem \eqref{eq:11}.  We then have that
$$
V(r\cos(\th), r\sin(\th ))=\sum_{k=1}^\infty \a_k(r) \psi_k(\th). 
$$
Then using \eqref{eq:Usolves-}, we find that the functions $\a_k$ solves the equation 
$$
\a_k''+\frac{2-2s}{r}\a_k'+\left(\frac{(2s-1)^2}{4}-\l_k(s) \right) \a_k=0 \qquad\textrm (0,\infty).
$$
We then get
$$
\a_k(r)=c^+_k r^{\frac{2s-1}{2}+\sqrt{\l_k(s)}}+c^-_k r^{\frac{2s-1}{2}-\sqrt{\l_k(s)}}, \qquad\textrm{for some constants $c_k^\pm\in \R$.}
$$
Noting that $V\in L^2((-1,1)\times (0,1); |z|^{-2}t^{1-2s})$ (by Hardy's inequality, see e.g. \cite{Fall-Felli}) we thus get $c^-_k=0$. It follows that 
$$
V(r\cos(\th), r\sin(\th ))=\sum_{k=1}^\infty c^+_k r^{\frac{2s-1}{2}+\sqrt{\l_k(s)}} \psi_k(\th). 
$$
From the growth assumption \eqref{eq:growth-final-Li}, the Parseval identity and  Lemma \ref{lem:lambda-2} we have that $c_k^+=0$ for all $k\geq 2$. Now since  $\l_1(s)=\frac{(2s-1)^2}{4} $, we obtain
$$
V(r\cos(\th), r\sin(\th ))=  c^+_1 r^{\frac{2s-1}{2}+\sqrt{\l_1(s)}} \psi_1(\th )= c^+_1 r^{\frac{2s-1}{2}+\frac{2s-1}{2}\textrm{sign}(2s-1)} \psi_1(\th) ,
$$
so that if $2s\leq 1$ then $V(r\cos(\th), r\sin(\th ))=c^+_1 \psi_1(\th)$, while $V(r\cos(\th), r\sin(\th ))=c^+_1 r^{2s-1}\psi_1(\th)$ in the case $2s>1$. 
Recalling that $V(x,0)=v(x)=u(x)-u(0)$ for all  $x\in\R_+$, we thus get $(i)$ for $2s\leq 1$. On the other hand for $2s>1$, we have $u(x)=u(0)+c^+_1 \psi_1(0) x^{2s-1}$ for all $x\in \R_+$.   Since\footnote{Communicated to the author by X. Ros-Oton and proved in \cite{AFR, Guan}. }   $\Ds_{\ov{\R_+}} x^{2s-1}\not=0$ on $\R_+$, we deduce that $c^+_1=0$ and thus  $u$ is a constant function on $\R_+$. \\
To obtain $(ii)$ we simply carry out the above argument  replacing $(u-u(0))1_{[0,\infty)}$ with $w$ (recall that by assumption $w(0)=0$) and we deduce that $w(x)=c^+_1 \psi_1(0) (x)_+^{2s-1}$.
\end{proof}

 
%

\subsection{Proof of Theorem \ref{th:Liouville} (completed)}
\begin{proof}
If $\e<\min (2s,1)$, then by  Proposition \ref{prop:fromNDto1D}, $u$ is a function of $x_N$. Applying Poposition \ref{prop:classif-1D-2s-not-1}, we get the result.\\
We now consider the case $2s>\e\geq 1$.  Since for all $R>0$, $u(\cdot R)$ solves the the same equation as $u$, 	by Proposition \ref{prop:high-tangderiv} and Theorem \ref{th:the-Apri-Hold}, we have that 
	\begin{align}\label{eq:growth-grad}
	\|\n_{x'} u\|_{L^\infty(B_R^+)}& \leq C R^{-1} \left( R^{-N/2}  \|u\|_{L^2(B_{2R}^+)}+R^{2s} \int_{|y|\geq R/2}|u(y)| 1_{\R^N_+}(y) |y|^{-N-2s}\, dy \right) \nonumber\\
	&\leq  C R^{\e-1},  
	\end{align}
where we used that, since $\e<2s$, then   $ \int_{|y|\geq R/2}|u(y)| 1_{\R^N_+}(y) |y|^{-N-2s}\, dy \leq C R^{-2s+\e}$.
Note that $0\leq \e-1<2s-1<1$. On the other hand, for $i=1,\dots,N-1$, we have that    $\de_{x_i}u\in H^s_{loc}(\ov{\R^N_+})\cap  L^1(\R^N_+; (1+|x|^{N+2s})^{-1})$ by Proposition \ref{prop:high-tangderiv} and  \eqref{eq:growth-grad}. Moreover it  satisfies $\Ds_{\ov{\R^N_+}}\de_{x_i} u=0$ on $\R^N_+$\footnote{Since $u_t:=\frac{u(\cdot+  t e_i)-u(\cdot)}{|t |}$, for $i=1,\dots,N-1$, satisfies $\Ds_{\ov {\R^N_+}} u_t=0$, thanks to the estimate of $\de_{x_i}u$ in \eqref{eq:growth-grad}, we can  send $t\to 0$ to see that $\Ds_{\ov{\R^N_+}}\de_{x_i} u=0$.}.  We then deduce,  from Proposition \ref{prop:classif-1D-2s-not-1}-$(i)$, Proposition \ref{prop:fromNDto1D} and \eqref{eq:growth-grad},    that  $\de_{x_i}u$ is constant on $\R^N_+$ for all $i$. Hence $u(x',x_N)=a(x_N)+b(x_N)\cdot x'$. Now noting that  ${u(x+h)-u(x)}=b(x_N)\cdot h$, for all $h\in \R^{N-1}$,  we see that\footnote{We observe that if $f$ is a function of $x_N$ only, then $\Ds_{\ov{\R^N_+}}f=\Ds_{\ov{\R_+}}f$.} $\Ds_{\ov{\R^N_+}}b= 0$ and thus from the growth assumption of $u$ and  Proposition \ref{prop:classif-1D-2s-not-1}, we obtain $b(x_N)=b(0)$ for all $x_N>0$. It is easy to see that $\Ds_{\ov{\R^N_+}} b(0)\cdot x_i=0$ on $\R^N_+$ for $i=1,\dots, N-1$.  From this we deduce that $\Ds_{\ov{\R^N_+}}u=\Ds_{\ov{\R^N_+}}a(x_N)=0 $ and therefore $a(x_N)=a(0)$,  thanks to Proposition \ref{prop:classif-1D-2s-not-1}-$(i)$.

\end{proof}


\section{Regularity estimates on open sets}\label{s:Reg-Neum}
We define a class of kernels arising after performing   a  parameterization that locally flatten the boundary of domain of $\R^N$.

For $\psi\in C^{0,1}(\R^{N-1})$,  we define   the kernel  $K^\psi_s:\R^N\times \R^N\to [0,\infty]$ by 
\be\label{eq:def-K-psi}
K^\psi_s(x,y):=  1_{\R^N_+}(x) 1_{\R^N_+}(y) \frac{c_{N,s}}{|\Psi(x)-\Psi(y) |^{N+2s}}, \qquad \Psi(x)=\Psi(x',x_N):=(x',x_N+\psi(x')).
\ee
We have  
\be \label{eq:defPsi}
|\Psi(x)-\Psi(y) |^2= |x-y|^2+ \mu(x,y) ,  \qquad  \mu(x,y):=2(x_N-y_N)(\psi(x')-\psi(y'))+   (\psi(x')-\psi(y'))^2.
\ee
  By the fundamental theorem of calculus, 
\be \label{eq:mu-control-n-psi}
\frac{|\mu(x,y)|}{|x-y|^2}\leq  \left( 2+\int_0^1|\n \psi(x'+t(y'-x'))|\, dt \right) \int_0^1|\n \psi(x'+t(y'-x'))|\, dt .
\ee
Recalling \eqref{eq:def-K_plus} for the definition of $K^+_s$.  Then  for $x,y\in \R^N_+$, we have  
\begin{align}
&{|x-y |^{N+2s}} \left( K^\psi_s(x,y)-K^+_s(x,y)\right) \nonumber \\
&=-(2s+N){|x-y |^{N+2s}}  \mu(x,y)  \int_0^1 \left(  |x-y|^2+t\mu(x,y)  \right)^{-\frac{N+2s+2}{2}}\,dt \nonumber\\
&=-(2s+N) c_{N,s} \frac{\mu(x,y)}{|x-y|^2} \int_0^1 \left(  1+t \frac{\mu(x,y)}{|x-y|^2}  \right)^{-\frac{N+2s+2}{2}}\,dt .
 \label{eq:remainder-Kernel-flat}
\end{align}
We then get from \eqref{eq:mu-control-n-psi}, for all  $x,y\in \R^N_+$
\be \label{eq:remainder-Kernel-flat0}
|x-y|^{N+2s}|K^\psi_s(x,y)-K^+_s(x,y)| \leq \frac{ c_{N,s}(N+2s) (2+\|\n \psi\|_{L^\infty(\R^{N-1})}) \|\n \psi\|_{L^\infty(\R^{N-1})} }{\left( 1- (2+\|\n \psi\|_{L^\infty(\R^{N-1})}) \|\n \psi\|_{L^\infty(\R^{N-1})} \right)^{\frac{N+2s+2}{2}} }.
\ee
 In the following, for a  kernel $K:\R^N\times \R^N\to [-\infty,\infty]$,  we define  the   bilinear form
\be\label{eq:def-dir-form}
\calD_{K }(u,v)=\frac{1}{2}\int_{\R^{2N}}(u(x)-u(y))(v(x)-v(y)) K(x,y)\, dxdy.
\ee
\subsection{A priori estimates}
We recall the following Caccioppoli type inequality,  see e.g. \cite[Lemma 9.1]{Fall-reg-1}.
\begin{lemma}\label{lem:non-loc-caciop}
Let $\O$ be an open  set, $h\in L^1_{loc}(\O)$ and $\cK$ a nonnegative and symmetric  function defined on $\R^\N\times \R^N$ such  that 
$$
 \int_{\R^{2N}} (w(x)-w(y))(\vp(x)-\vp(y))\cK(x,y)\, dxdy=\int_{\R^N}h(x)\vp(x)\, dx \qquad\textrm{ for all $\vp \in C^\infty_c(\O) $.}
$$
Then for all $\e>0$ and all $\vp \in C^\infty_c(\O) $, we have 
\begin{align*}
(1-\e)\int_{\R^{2N}}(w(x)-w(y))^2&\vp^2(y) \cK(x,y)\,dy dx\leq  \int_{\R^N} |h (  x)| |w(x)|  \vp^2(x)\, dx\\
&+   \e^{-1}   \int_{\R^{2N}}w^2(x)  (\vp(x)-\vp(y))^2\cK(x,y)\, dy dx.
\end{align*}
\end{lemma}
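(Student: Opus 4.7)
The plan is to test the weak equation with $\phi = w\vp^2$ and use a standard symmetric algebraic identity together with a Young-type inequality on the error term. The mild obstacle is that $w\vp^2$ need not lie in $C^\infty_c(\O)$, so I would first justify by density (using a standard cutoff/mollification argument in the class of admissible test functions, which is routine given that $w$ already has the finite energy on $\supp\vp$ required by the right-hand side of the desired inequality being assumed finite, otherwise there is nothing to prove).

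Granting the test function, the key algebraic step is the symmetric identity
\begin{equation*}
w(x)\vp^2(x)-w(y)\vp^2(y)=\tfrac{1}{2}\bigl(\vp^2(x)+\vp^2(y)\bigr)\bigl(w(x)-w(y)\bigr)+\tfrac{1}{2}\bigl(w(x)+w(y)\bigr)\bigl(\vp^2(x)-\vp^2(y)\bigr).
\end{equation*}
Multiplying by $(w(x)-w(y))$ and integrating against $\cK$, the first piece produces, after using symmetry of $\cK$ to symmetrize $\vp^2(x)+\vp^2(y)$ into $2\vp^2(y)$, exactly $\int_{\R^{2N}}(w(x)-w(y))^2\vp^2(y)\cK(x,y)\,dxdy$. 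The second piece becomes
\begin{equation*}
\tfrac{1}{2}\int_{\R^{2N}}(w(x)-w(y))(w(x)+w(y))(\vp(x)-\vp(y))(\vp(x)+\vp(y))\cK(x,y)\,dxdy,
\end{equation*}
which we estimate by writing the integrand as $\tfrac12\alpha\beta$ with $\alpha=(w(x)-w(y))(\vp(x)+\vp(y))$ and $\beta=(w(x)+w(y))(\vp(x)-\vp(y))$, and applying the elementary Young inequality $|\alpha\beta|\le \tfrac{\e}{2}\alpha^2+\tfrac{1}{2\e}\beta^2$ together with $(\vp(x)+\vp(y))^2\le 2(\vp^2(x)+\vp^2(y))$ and $(w(x)+w(y))^2\le 2(w^2(x)+w^2(y))$. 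Using the symmetry of $\cK$ once more to symmetrize, we obtain the two bounds $\e\int(w(x)-w(y))^2\vp^2(y)\cK\,dxdy$ and $\e^{-1}\int w^2(x)(\vp(x)-\vp(y))^2\cK\,dxdy$.

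Putting everything together: the left-hand side of the tested equation reads $\int_{\R^N}h(x)w(x)\vp^2(x)\,dx$, which is bounded in absolute value by $\int_{\R^N}|h(x)||w(x)|\vp^2(x)\,dx$; while the right-hand side, by the algebra above, is bounded below by
\begin{equation*}
(1-\e)\int_{\R^{2N}}(w(x)-w(y))^2\vp^2(y)\cK(x,y)\,dxdy-\e^{-1}\int_{\R^{2N}}w^2(x)(\vp(x)-\vp(y))^2\cK(x,y)\,dxdy.
\end{equation*}
Rearranging yields the claimed inequality. The only genuinely delicate point is the admissibility of $w\vp^2$ as a test function; once that is in hand the rest is pure bookkeeping of symmetric bilinear forms.
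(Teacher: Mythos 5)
Your proof is correct and is exactly the standard Caccioppoli argument (test with $w\vp^2$, split via the symmetric product identity, absorb the error by Young's inequality) that the paper itself invokes by citation to \cite[Lemma 9.1]{Fall-reg-1} rather than proving. The algebra checks out, including the factor bookkeeping that yields $(1-\e)$ and $\e^{-1}$, and you rightly flag the admissibility of $w\vp^2$ as the only point needing a routine density/truncation justification.
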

%
%
%
Next, we prove the following   result which is the first step to get  regularity up to the boundary in $C^1$ domains.
 \begin{proposition} \label{prop:bound-Kato-abstract}
Let  $s_0,\ov \a\in (0,1)$ and $p>\frac{N}{2s_0}$.     Then there exist $\e_0,C>0$ such that  for every $\psi\in C^{1}(\R^{N-1})$, $s\in [s_0,1)$  and for every
$ g\in L^p(\R^N)$,  $ v\in H^s(\R^N_+) $  satisfying 
$$
 \| \n \psi\|_{L^{\infty}(\R^{N-1})}<\e_0 
$$
and 
$$
\calD_{K^\psi_s}(v,\vp)=\int_{B_2^+}g(x)\vp(x)\, dx \qquad\textrm{ for all $\vp \in C^\infty_c(B_2) $, }
$$
we have 
\be \label{eq:to-contradict-Propok}
\sup_{r>0} r^{-2\a-N}\sup_{z\in B_1'}  \|  v-  v_{B_r^+(z)}\|_{L^2(B_r^+(z))}^2 \leq  C    (  \|v\|_{  L^2(\R^N_+) } +  \|g\|_{ L^p(\R^N) } )^2,
\ee
where
$u_{A}=\frac{1}{|A|}\int_{A}f(x)\, dx$ and $\a:= \min( 2s-N/p, \ov \a)$.
\end{proposition}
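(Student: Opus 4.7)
The plan is a Campanato-type contradiction and blow-up argument, with compactness coming from Theorem \ref{th:the-Apri-Hold} and rigidity from the Liouville classification in Theorem \ref{th:Liouville}. Suppose the conclusion fails. Then there exist sequences $s_k\in[s_0,1)$, $\psi_k\in C^1(\R^{N-1})$ with $\|\nabla\psi_k\|_\infty<\epsilon_0$, and $v_k,g_k$ of unit norm solving the weak equation with kernel $K^{\psi_k}_{s_k}$, yet with
\[
\theta_k:=\sup_{r>0,\,z\in B_1'} r^{-\alpha-N/2}\|v_k-(v_k)_{B_r^+(z)}\|_{L^2(B_r^+(z))}\longrightarrow\infty.
\]
Using the standard maximal-function normalization, pick $r_k\to 0$ and $z_k\in\overline{B_1'}$ realizing a value $M_k\geq\theta_k/2$, and set
\[
w_k(\tilde x):=\frac{v_k(z_k+r_k\tilde x)-(v_k)_{B_{r_k}^+(z_k)}}{M_k\,r_k^\alpha},\qquad \tilde\psi_k(\tilde x'):=\frac{\psi_k(z_k'+r_k\tilde x')-\psi_k(z_k')}{r_k}.
\]
By construction $\tilde\psi_k(0)=0$, $\|\nabla\tilde\psi_k\|_\infty<\epsilon_0$, $(w_k)_{B_1^+}=0$, $\|w_k\|_{L^2(B_1^+)}=1$, and a standard dyadic comparison of averages (exploiting the maximality in the selection of $(r_k,z_k)$) yields the polynomial growth $\|w_k\|_{L^2(B_R^+)}\leq CR^{\alpha+N/2}$ for $1\leq R\leq 1/r_k$.

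A direct change of variables in the weak formulation shows that $w_k$ solves
\[
\calD_{K^{\tilde\psi_k}_{s_k}}(w_k,\tilde\varphi)=\int \tilde g_k\,\tilde\varphi\,d\tilde x,\qquad \tilde g_k:=\frac{r_k^{2s_k-\alpha}}{M_k}\,g_k(z_k+r_k\,\cdot),
\]
for test functions supported in $B_{1/r_k}$, with $\|\tilde g_k\|_{L^p}\leq M_k^{-1}\to 0$ precisely because $\alpha\leq 2s_k-N/p$. The a priori H\"older estimate of Theorem \ref{th:the-Apri-Hold} rests on the reflection trick of Lemma \ref{lem:reflect-sol}, the De Giorgi iteration of Proposition \ref{prop:DGclass}, and the Caccioppoli inequality of Lemma \ref{lem:non-loc-caciop}; thanks to the perturbation bound \eqref{eq:remainder-Kernel-flat0}, all the coercivity and two-sided bounds underlying these estimates survive for $K^{\tilde\psi_k}_{s_k}$ once $\epsilon_0$ is chosen sufficiently small, yielding uniform $C^{\alpha_0}_{\loc}$ bounds for $w_k$ up to $\partial\R^N_+$. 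By Arzel\`a--Ascoli and a diagonal extraction, along a subsequence, $w_k\to w_\infty$ locally uniformly on $\overline{\R^N_+}$, $s_k\to s_\infty\in[s_0,1]$, and $\tilde\psi_k\to\tilde\psi_\infty$ locally uniformly with $\tilde\psi_\infty(0)=0$ and $\|\nabla\tilde\psi_\infty\|_\infty\leq\epsilon_0$. Passing to the limit in the weak formulation (with the tail terms controlled by the polynomial growth bound) one obtains that $w_\infty$ solves the homogeneous equation with kernel $K^{\tilde\psi_\infty}_{s_\infty}$, with $(w_\infty)_{B_1^+}=0$, $\|w_\infty\|_{L^2(B_1^+)}=1$, and the polynomial growth $\|w_\infty\|_{L^2(B_R^+)}\leq CR^{\alpha+N/2}$ for all $R\geq 1$.

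The hard part will be applying the Liouville classification to $w_\infty$, since the limit kernel $K^{\tilde\psi_\infty}_{s_\infty}$ is not exactly the half-space kernel $K^+_{s_\infty}$. I would handle this by choosing $\epsilon_0$ sufficiently small that the reflection, Hardy-inequality and Caffarelli--Silvestre extension arguments of Section \ref{s:Liouville}—in particular the coercivity \eqref{eq:ovK-coerciv}, the tangential-derivative bound of Proposition \ref{prop:high-tangderiv}, and the eigenvalue separation of Lemma \ref{lem:lambda-2}—all extend to the perturbed kernel $K^{\tilde\psi_\infty}_{s_\infty}$, so the conclusion of Theorem \ref{th:Liouville} still forces $w_\infty$ to be constant when $2s_\infty\leq1$, and of the form $a+c\cdot x'$ when $2s_\infty>1$. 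In the latter case the growth bound $\|w_\infty\|_{L^2(B_R^+)}\leq CR^{\alpha+N/2}$ combined with $\|c\cdot x'\|_{L^2(B_R^+)}^2\gtrsim R^{N+2}|c|^2$ and $\alpha<1$ forces $c=0$ by letting $R\to\infty$; the constraint $(w_\infty)_{B_1^+}=0$ then yields $w_\infty\equiv 0$, contradicting $\|w_\infty\|_{L^2(B_1^+)}=1$.
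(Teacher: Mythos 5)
Your overall scheme (Campanato contradiction, blow-up, compactness, Liouville) is the one the paper uses, but there is a genuine gap in how you set up the contradiction, and it breaks the final step. You fix $\epsilon_0$ once and for all and extract a contradiction sequence with $\|\nabla\psi_k\|_{L^\infty}<\epsilon_0$. Since the Lipschitz seminorm is scale invariant, the rescaled graphs $\tilde\psi_k$ keep exactly the same gradient bound, and because the $\psi_k$ are only $C^1$ with no uniform modulus of continuity for $\nabla\psi_k$, the limit $\tilde\psi_\infty$ is an arbitrary Lipschitz function with constant $\leq\epsilon_0$ --- neither zero nor affine. Your blow-up limit therefore solves a homogeneous equation for the perturbed kernel $K^{\tilde\psi_\infty}_{s_\infty}$, to which Theorem \ref{th:Liouville} does not apply. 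Your proposed remedy --- ``choose $\epsilon_0$ small enough that the arguments of Section \ref{s:Liouville} extend'' --- is not available: the Liouville classification rests on the exact flat half-space structure (the reflection symmetry \eqref{eq:ovK-sym} behind Lemma \ref{lem:reflect-sol}, the tangential translation invariance used in Proposition \ref{prop:high-tangderiv}, the Caffarelli--Silvestre extension, and the explicit eigenvalue computation of Lemma \ref{lem:lambda-2}), none of which survive a fixed Lipschitz perturbation of the boundary, however small. A Liouville theorem on a general Lipschitz epigraph is precisely what one does not have.

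The paper closes this by negating the full quantified statement: if the proposition fails, then for every $n$ one may take $\epsilon_0=1/n$ and $C=n$, so the counterexample sequence satisfies $\|\nabla\psi_n\|_{L^\infty}<1/n\to 0$. Then \eqref{eq:remainder-Kernel-flat} gives $|K_n-K^+_{s_n}|\leq \tfrac{C}{n}|x-y|^{-N-2s_n}$ on $\R^N_+\times\R^N_+$ (see \eqref{eq:K-near-flat-not}), the rescaled kernels converge to the exact half-space kernel, and the limit solves $(-\Delta)^{\bar s}_{\overline{\R^N_+}}w=0$, where Theorem \ref{th:Liouville} applies (the affine part being excluded by the growth exponent $\alpha<1$, as you correctly note). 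Two smaller points: you must also treat the case $s_k\to 1$, where the limit equation becomes local and one concludes via Lemma \ref{lem:nonloc-to-loc} and the classical Liouville theorem; and the uniform up-to-the-boundary $C^{\alpha_0}$ bound you invoke for compactness is both unnecessary and itself unjustified for the perturbed kernels (Theorem \ref{th:the-Apri-Hold} relies on the reflection trick, which needs the flat kernel) --- the Caccioppoli/Sobolev bound $(1-s_k)[w_k]^2_{H^{s_k}(B_M^+)}\leq C(M)$ already provides the needed $L^2_{loc}$ compactness.
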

 \begin{proof}
Assume that  the assertion in the proposition does not hold, then   for every   $n\in \N$,  there exist $\psi_n\in C^{1}(\R^{N-1})$,  $s_n\in [s_0,1)$,  $ g_n\in L^p(\R^N)$,  $ v_n\in H^s( \R^N_+)$, with 
$$
\| g_n \|_{L^p(\R^N) } + \|v_n \|_{ L^2(\R^N_+)  }\leq 1
$$
satisfying 
 \be\label{eq:smalL-infty-psy-param}
 \| \n \psi_n\|_{L^{\infty}(\R^{N-1})}<\frac{1}{n}
\ee
and 
\be\label{eq:frac-half-flat-n}
\calD_{K^{\psi_n}_{s_n}}(v_n,\vp)=\int_{B_2^+}g_n(x)\vp(x)\, dx \qquad\textrm{ for all $\vp \in C^\infty_c(B_2) $, }
\ee
while
$$
\sup_{r>0} r^{-N-2\a_n } \sup_{z\in B_1'}  \| v_n-( v_n)_{B_{ r}(z)}\|_{L^2(B_{r}(z))}^2 > n.
$$
where $ \a_n:=\min(\ov\a, 2s_n-N/p))$.
Consequently,  there exists $\ov r_n>0$ and  $z_n \in B_1'$ such that 
\be\label{eq:ov-r-n-un-larger-n-eps}
 \ov r^{-N-2\a_n }_n   \| v_n-( v_n)_{B_{\ov r_n}^+(z_n)}\|_{L^2(B_{\ov r_n}^+(z_n))}^2 > n/2  .
\ee
We consider the (well defined, because $\|  v_n\|_{ L^2(\R^N_+)} \leq 1$) nonincreasing function $\Theta_n: (0,\infty)\to [0,\infty)$ given by
$$
\Theta_n(\ov r)=\sup_{ r\in [ \ov r,\infty)}    r^{-N-2\a_n}   \| v_n-(  v_n)_{B_{ r}^+(z_n)}\|_{L^2(B_{ r}^+(z_n))}^2  .
$$
Obviously, for $n\geq 2$, by \eqref{eq:ov-r-n-un-larger-n-eps},   
\be\label{eq:The-n-geq-n}
  \Theta_n(\ov r_n)> n/2\geq 1 .
\ee
Hence, provided $n\geq 2$,  there exists $r_n\in [\ov r_n,\infty)$ such that 
\begin{align*}
\Theta_n( r_n)&\geq   r^{-N-2\a_n }_n   \| v_n - (  v_n)_{ B_{ r_n}^+ (z_n)  } \|_{L^2(B_{ r_n}^+(z_n))}^2\\
&  \geq \Theta_n(\ov r_n)-1/2\geq (1-1/2)\Theta_n(\ov r_n)\geq \frac{1}{2}\Theta_n( r_n),
\end{align*}
where we used the monotonicity of $\Theta_n$ for the last inequality, while  the first inequality  comes from the definition of $\Theta_n$.
In particular, thanks to  \eqref{eq:The-n-geq-n},  $\Theta_n( r_n)\geq  n/4$. Now  since $ \|  v_n \|_{L^2(\R^N)} \leq 1$,  we have that $  r^{-N-2\a }_n  \geq n/8$, so that $r_n\to 0$ as $n\to \infty$.  
 We now   define the   sequence  of functions
$$
 {w}_n(x)=  \Theta_n(r_n)^{-1/2} r_n^{-\a_n}   \left\{ v_n(r_n x+z_n)   - \frac{1}{|B_1^+|} \int_{B_1^+}  v_n(r_n x+z_n) \, dx  \right\}, 
$$
which,    satisfies
\be \label{eq:w-n-nonzero}
 \|w_n\|_{L^2(B_{1}^+)}^2 \geq \frac{1}{2}  \qquad\textrm{and} \qquad \int_{B_1^+}w_n(x)\,dx =0 \qquad\textrm{ for every $n\geq2$.}
\ee
%
 %
 Using that, for every $r>0$ and $z\in B_1'$,  $    \| v_n-(  v_n)_{B_{ r}^+(z)}\|_{L^2(B_{ r}^+(z))}^2\leq r^{N+2\a_n}\Theta_n(r)$  and the monotonicity of $\Theta_n$, by   \cite[Lemma 3.1]{Fall-reg-1}, we find that 
\be\label{eq:groht-w-n-abs}
 \|w_n\|_{L^2(B_{R}^+)}^2 \leq  C   R^{N+2\a_n}  \qquad\textrm{ for every $R\geq 1$ and $n \geq 2$,}
\ee
for some constant $C=C(N,s_0,p)>0$.\\
%
%
%
%
%
%
%
We define
$$
  K_n(x,y)= K^{\ti \psi_n}_{s_n}(x,y), \qquad\textrm{ where  $\ti \psi_{n}(x)=\frac{1}{r_n}\psi_n(r_n x+z_n)$.}
$$

Since $|z_n|\leq 1$, we have  that $B_{\frac{1}{2r_n}}(0)\subset B_{\frac{2}{r_n}}(\frac{-z_n}{r_n}) $.
Letting
$$
 \ov g_n(x)= \frac{r_n^{2s-\a_n}}{\Theta_n(r_n)^{\frac{1}{2}}}  g_n (r_n x+z_n),
$$ 
 then by a change of variable,  we  obtain that, for all $\vp \in C^\infty_c(B_\frac{1}{2r_n}(0))$ , 
\be\label{eq:frac-half-flat-n-scal}
\frac{1}{2}\int_{\R^{2N}} (w_n(x)-w_n(y))(\vp(x)-\vp(y))K_n(x,y)\, dxdy=\int_{\R^N_+}\ov g_n(x)\vp(x)\, dx.
\ee
We fix $M>1$ and let $n\geq 2$ large, so that $1<M<\frac{1}{2r_n}$.  Let $\chi_M\in C^\infty_c(B_{4M})$ be such that $\chi_M=1$ on $B_{2M}$ and define $W_n:=\chi_M w_n$. Then by  letting  $\vp=\chi_{M/4}\in C^\infty_c(B_{M})$,  we find that
\be\label{eq:frac-half-flat-n-scal-cut}
\frac{1}{2} \int_{\R^{2N}} (W_n(x)-W_n(y))(\vp(x)-\vp(y))K_n(x,y)\, dxdy=\int_{\R^N_+}\ti g_n(x)\vp(x)\, dx,
\ee
with, thanks to \eqref{eq:groht-w-n-abs},  
$$
\|\ti g_n\|_{L^p(\R^N)}\leq \|\ov g_n\|_{L^p(\R^N)} + C(M) .
$$
  By Lemma \ref{lem:non-loc-caciop},   we have 
\begin{align}
&(1-\e)\int_{\R^{2N}}(W_n(x)-W_n(y))^2\vp^2(y) K_n(x,y)\,dy dx\leq 2 \int_{B_M^+} |\ti  g_n  (  x)| |W_n(x)|  \vp^2(x)\, dx \nonumber\\
&+ 2  \e^{-1}   \int_{\R^{2N}}W^2_n(x)  (\vp(x)-\vp(y))^2K_n(x,y)\, dy dx \label{eq:after-Cacc}\\
&\leq 2 \int_{B_M^+} |\ti  g_n  (  x)| |W_n(x)|  \vp^2(x)\, dx+  \e^{-1} \frac{c_{N,s_n}}{1-s_n}C(M)  \|w_n\|_{L ^2(B_{2M})}. \nonumber
\end{align}
By Young's inequality and Sobolev inequality, we have that 
\begin{align*}
\int_{B_M^+} |\ti  g_n  (  x)| W_n(x)|  \vp^2(x)\, dx&\leq \e \int_{B_M^+} |\ti  g_n  (  x)|W_n^2(x)  \vp^2(x)\, dx+ C_\e \int_{B_M^+} |\ti  g_n  (  x)|   \vp^2(x)\, dx\\
&\leq \e C(1-s_n)  \|\vp \ti  g_n\|_{L^p(\R^N)}\|\vp W_n \|_{H^s(\R^N_+)}^2+ C(M)\|\ti g_n\|_{L^p(\R^N)}\\
&\leq    C \e    \| \ti g_n\|_{L^p(\R^N)} \int_{\R^{2N}}(W_n(x)-W_n(y))^2\vp^2(y) K_n(x,y)\,dy dx\\
& +    C  \|w_n\|_{L ^2(B_{2M})}+  C(M)\|\ti g_n\|_{L^p(\R^N)}\\
&\leq C(M) \left(\e \int_{\R^{2N}}(W_n(x)-W_n(y))^2\vp^2(y) K_n(x,y)\,dy dx+ 1    \right),
\end{align*}
where we used \eqref{eq:groht-w-n-abs} in the last inequality.
Recall that $W_n=w_n$ on $B_{M/2^+}$. Using the above inequality in \eqref{eq:after-Cacc}, provided $\e$ is small,  we obtain 
\be\label{eq:local-bnd-w_n}
(1-s_n) [w_n]_{ H^{s_n}\left( B_{M/2}^+ \right) }^2\leq  \int_{\R^{2N}}(W_n(x)-W_n(y))^2\vp^2(y) K_n(x,y)\,dy dx\leq C(M).
\ee
Let $\ov s=\lim_{n\to \infty}s_n\in [s_0,1]$.
Thanks to \eqref{eq:local-bnd-w_n}, we have that $w_n$ is bounded in $H^{s_n}_{loc}(\ov{\R^N_+})$.  Hence by a diagonal argument, up to a subsequence, there exists $ w\in H^{\ov s}_{loc}( \ov{\R^N_+})$ such that 
\be \label{eq:w-n-conv-1}
w_n\to  w \qquad\textrm{ in $L^2_{loc}(\ov{\R^N_+})$} 
\ee
and, for all $\s<\ov s$, 
\be \label{eq:w-n-conv-2}
w_n \to  w \qquad\textrm{ in $H^{\s}_{loc}(\ov{\R^N_+})$} .
\ee
Therefore passing to the limit in \eqref{eq:w-n-nonzero} and \eqref{eq:groht-w-n-abs},  we obtain
\be \label{eq:w-n-nonzero-l}
 \|w\|_{L^2(B_{1}^+)}^2 \geq \frac{1}{2} \qquad\textrm{and} \qquad \int_{B_1^+}w(x)\,dx =0 
\ee
and 
\be\label{eq:groht-w-n-abs-l}
 \|w\|_{L^2(B_{R}^+)}^2 \leq  C   R^{N+2 \b}  \qquad\textrm{ for every $R\geq 1$,}
\ee
where $\b=\min (2\ov s-N/p,\ov \a)$.
We observe that \eqref{eq:groht-w-n-abs-l} and \eqref{eq:w-n-conv-1}  imply
\be \label{eq:w-n-conv-3}
\int_{\R^N_+}\frac{|w_n(x)-  w(x)|}{1+|x|^{N+2s_n}}\, dx \to 0 \qquad \textrm{ as  $n\to \infty$.} 
\ee
Next by \eqref{eq:frac-half-flat-n-scal} and the choice of $\a_n$, we have  for all  $\phi\in C^\infty_c(B_M)$, with $M$  as above, 
\begin{align} 
&\frac{1}{2} \left|\int_{\R^{2N}}(w_n(x)-w_n(y))(\phi(x)-\phi(y)) K_n(x,y)\, dxdy \right|\leq \|\ov g_n\|_{L^p(\R^N)} \|\phi\|_{L^{\frac{p}{p-1}}(\R^N)} \nonumber\\
 &\leq   \Theta_n(r_n)^{-1/2} r_n^{2s-\a_n-N/p}   \| g_n\|_{L^p(\R^N)} \|\phi\|_{L^{\frac{p}{p-1}}(\R^N)} \nonumber\\
 &\leq  \Theta_n(r_n)^{-1/2}  \|\phi\|_{L^{\frac{p}{p-1}}(\R^N)}. \label{eq:close-Liouvill}
\end{align}
From \eqref{eq:smalL-infty-psy-param} and \eqref{eq:remainder-Kernel-flat}, we get
\be \label{eq:K-near-flat-not}
  |K_n(x,y)-    K_{s_n}^+(x,y) | \leq  \frac{C}{n}{|x-y |^{-N-2{s_n}}} 1_{\R^N_+}(x)1_{\R^N_+}(y).
\ee
On the other hand, for all $x,y\in \R^N_+$
\be\label{eq:Ksn-Ksovs}
 |K_{ s_n}^+(x,y)-K_{\ov s}^+(x,y)|\leq C|s_n-\ov s| \frac{1+|\log|x-y||}{|x-y|^{N}}\max(|x-y|^{-2\max(s_n,\ov s)}, |x-y|^{-2\min(s_n,\ov s)})   .
\ee
Hence if $\ov s<1$,  then  by \eqref{eq:K-near-flat-not}, \eqref{eq:w-n-conv-2},  \eqref{eq:w-n-conv-3} and using that  $\Theta_n(r_n)\to\infty$ as $n\to\infty$, we deduce from \eqref{eq:close-Liouvill}  that \footnote{To see \eqref{eq:flat-eq-w}, we used that  $\left|\int_{B_M\times B_M}|(w(x)-w_n(x))-(w(y)-w_n(y))||\phi(x)-\phi(y)| K_n(x,y)\, dxdy \right|\leq [w-w_n]_{H^\s(B_M^+)}[\phi ]_{H^{2s_n-\s}(B_M)}  \to 0$ as $n\to \infty$, provided that $\s$ is close  to $\ov s$. Now, \eqref{eq:groht-w-n-abs} and \eqref{eq:groht-w-n-abs-l} imply that $\|w-w_n\|_{L^2(B_R^+)}^2\leq C R^{N+2s_n-\e}$ for all $R>1$, for some $\e>0$. From  these estimates, we get $\left|\int_{\R^N\times \R^N}|(w(x)-w_n(x))-(w(y)-w_n(y))||\phi(x)-\phi(y)| K_n(x,y)\, dxdy \right| \to 0$ as $n\to \infty$. Similarly, using \eqref{eq:K-near-flat-not}, \eqref{eq:Ksn-Ksovs} and  that $\phi\in C^1_c(\R^N)$, we also have $\left|\int_{\R^N\times \R^N}|w(x)-w(y)||\phi(x)-\phi(y)|| K_n(x,y)-K_{\ov s}^+(x,y)|\, dxdy \right| \to 0$ as $n\to \infty$. }
\be \label{eq:flat-eq-w}
\int_{\R^{N}_+\times \R^N_+  }(w(x)-w(y))(\phi(x)-\phi(y))|x-y|^{-N-2\ov s}\, dxdy =0, \qquad \textrm{  for all $\phi\in C^\infty_c(\R^N)$.}
\ee
%
Since $w\in H^{\ov s}_{loc}(\ov{\R^N_+})$, by Theorem \ref{th:Liouville},  we deduce that $w\equiv Const.$ on $\R^N_+$.  This leads to a contradiction in   \eqref{eq:w-n-nonzero-l}. \\

If now $\ov s=1$, then by  Lemma \ref{lem:nonloc-to-loc} and \eqref{eq:close-Liouvill}, we obtain
$$
\int_{\R^N_+}\n w(x)\cdot \n \phi(x)\, dx=0 \qquad\textrm{ for all $\phi \in C^\infty_c(\R^N)$.}
$$
From this and \eqref{eq:groht-w-n-abs-l}, it follows from the well known Liouville theorem for the Laplacian that $w\equiv Const.$ on $\R^N_+$.  This leads to a contradiction in   \eqref{eq:w-n-nonzero-l}. 

\end{proof}
We state the following result from \cite{RS-1} that we  will use frequently in the following.
\begin{lemma}\label{lem:Prop-of-RS}
Let $ u\in L^\infty(B_1^+)$ and  suppose that for every $x=(x',x_N)\in B_{1}^+$, 
$$
|u(x)| +[u]_{C^\g(B_{\frac{x_N}{2}}(x))}\leq C, \qquad\textrm{ for some constant $C>0$ and $\g\in (0,1)$.}
$$
Then there exists $\ov C >0$ and $r>0$ depending only on $N,C,\g$ such that 
$$
\|u\|_{C^\g(B_{r}^+)}\leq \ov C.
$$
\end{lemma}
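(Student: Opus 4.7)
My plan is to use the classical dyadic-chain argument that promotes an interior H\"older estimate, whose oscillation scale is proportional to the distance to the boundary, into a H\"older estimate up to the boundary. The starting observation I would record first is that the hypothesis can be rephrased as: whenever $z,w\in B_1^+$ satisfy $|z-w|\le \min(z_N,w_N)/2$, one has $|u(z)-u(w)|\le C|z-w|^\gamma$, because both points belong to $B_{z_N/2}(z)$ and the local seminorm bound applies.

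Fix $r\in(0,1/5)$ so that every auxiliary point used below lies in $B_1^+$, and take arbitrary $x,y\in B_r^+$ with $x_N\le y_N$ and $h:=|x-y|$. In the easy case $h\le x_N/2$, the bound $|u(x)-u(y)|\le Ch^\gamma$ is immediate from the reformulated hypothesis. In the remaining case $h>x_N/2$, I would route the comparison through the two lifted points $x^\star:=(x',2h)$ and $y^\star:=(y',2h)$, both at height $2h$, in three pieces.

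\textbf{(i)} A chain from $x$ to $x^\star$ through the sequence $\phi_k:=(x',(4/3)^k x_N)$ stopped at the smallest $K$ with $(4/3)^K x_N\ge 2h$, followed by a single direct step to $x^\star$. Each consecutive pair lies in a ball $B_{(\phi_k)_N/2}(\phi_k)$ by construction (the step size equals $(\phi_k)_N/3<(\phi_k)_N/2$), so the hypothesis yields $|u(\phi_{k+1})-u(\phi_k)|\le C((\phi_k)_N/3)^\gamma$. The resulting geometric sum has ratio $(4/3)^\gamma>1$ and telescopes to a multiple of $h^\gamma$ independent of $x_N$, using $(4/3)^K x_N\le (8/3)h$. \textbf{(ii)} The analogous construction for $y$: either an upward chain (if $y_N<2h$), or a single direct step (if $y_N\ge 2h$, in which case $y_N\le x_N+h<3h$ makes $|y-y^\star|\le h\le y_N/2$ automatic). \textbf{(iii)} A direct comparison $|u(x^\star)-u(y^\star)|\le Ch^\gamma$, valid because $|x^\star-y^\star|=|x'-y'|\le h=(x^\star)_N/2$.

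Adding these three bounds yields $|u(x)-u(y)|\le \bar C h^\gamma$ with $\bar C=\bar C(N,C,\gamma)$, which combined with the uniform bound $|u|\le C$ gives the asserted $C^\gamma$-estimate on $\overline{B_r^+}$. The only delicate piece will be the bookkeeping in step (i): I need to pick the chain ratio in $(1,3/2)$ (for which $4/3$ is a convenient choice) so that each consecutive pair genuinely fits inside an admissible ball, and then verify that the geometric sum $\sum_{k=0}^{K-1}((x_N/3)(4/3)^k)^\gamma$ collapses to a constant multiple of $h^\gamma$ with constant independent of both $x_N$ and $h$. No deeper idea is required beyond this careful geometric bookkeeping.
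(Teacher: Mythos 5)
The paper offers no proof of this lemma: it is quoted as a known result from \cite{RS-1}, so there is no in-paper argument to compare against. Your chaining proof is correct and is essentially the standard way such ``interior estimate at scale $x_N$ implies boundary H\"older estimate'' statements are established. The three-piece decomposition is sound: the reformulated hypothesis ($|u(z)-u(w)|\le C|z-w|^\gamma$ whenever $|z-w|\le z_N/2$) is exactly what the assumption gives; the vertical chain $\phi_k=(x',(4/3)^kx_N)$ has step length $(\phi_k)_N/3<(\phi_k)_N/2$ so each step is admissible, and the geometric sum $C3^{-\gamma}x_N^\gamma\sum_{k<K}(4/3)^{k\gamma}\le C'\bigl((4/3)^Kx_N\bigr)^\gamma\le C'(8h/3)^\gamma$ is indeed a multiple of $h^\gamma$ uniformly in $x_N$ and $h$; the terminal step $|\phi_K-x^\star|<2h/3\le (x^\star)_N/2$ and the horizontal step $|x^\star-y^\star|\le h=(x^\star)_N/2$ are both admissible; and in the case $y_N\ge 2h$ your single step uses $y_N<x_N+h<3h$, so $|y-y^\star|=y_N-2h<h\le y_N/2$, which checks out. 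The only point worth pinning down is the radius: with $r$ as large as $1/5$ the top of the chain, at height $(4/3)^Kx_N<8h/3<16r/3$, need not lie in $B_1$, so you should take $r$ strictly smaller (say $r=1/10$) to make good on the promise that all auxiliary points stay in $B_1^+$; this is exactly the bookkeeping you flagged and does not affect the argument.
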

As a consequence of Proposition \ref{prop:bound-Kato-abstract}, we get the 
\begin{corollary}\label{eq:cor-reg}
Let  $s_0, \ov\a\in (0,1)$.      Let  $\psi\in C^{1}(\R^{N-1})$, $s\in [s_0,1)$ and $p>\frac{N}{2{s_0}}$. Consider  
$ g\in L^p(\R^N)$,  $ v\in H^s(\R^N_+) $  satisfying 
\be\label{eq:frac-half-flat}
\calD_{K^\psi_s}(v,\vp)=\int_{B_2^+}g(x)\vp(x)\, dx \qquad\textrm{ for all $\vp \in C^\infty_c(B_2) $, }
\ee
Then there exist  $\e_0,C>0$ depending only on $s_0,N,p$ and $\ov\a$   such that if
$$
[\psi]_{C^{1}(\R^{N-1}) }<\e_0,
$$
we have 
\be \label{eq:Calpha-v}
\|v\|_{C^{\a}(\ov{B_{1}^+})}\leq C \left(  \|v\|_{L^2(\R^N_+)}+ \|g\|_{L^p(\R^N)}  \right),
\ee
with $\a:=\min (\ov\a, 2s-N/p)$.
\end{corollary}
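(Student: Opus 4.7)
The plan is to upgrade the $L^2$-Campanato-type estimate supplied by Proposition~\ref{prop:bound-Kato-abstract} into the pointwise H\"older bound \eqref{eq:Calpha-v} by combining it with interior regularity for $\calD_{K^\psi_s}$ and patching the two via Lemma~\ref{lem:Prop-of-RS}. Write $M := \|v\|_{L^2(\R^N_+)} + \|g\|_{L^p(\R^N)}$. Applying Proposition~\ref{prop:bound-Kato-abstract} with the same smallness threshold $\e_0$ gives
\[
r^{-N-2\a}\, \|v - v_{B_r^+(z)}\|_{L^2(B_r^+(z))}^2 \leq C M^2 \qquad \text{for all } r > 0 \text{ and } z \in B_1'.
\]
By the classical Campanato characterization in a half-ball with centers on the flat boundary, this $L^2$-mean-oscillation decay forces $v$ to admit a representative, continuous on $\overline{B_{1/2}^+}$, with $|v(x) - v(z)| \leq C M |x - z|^\a$ whenever $x \in B_{1/2}^+$ and $z \in B_1'$ is its orthogonal projection onto the boundary. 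Choosing $r = 1$ in the Campanato estimate also controls the averages $v_{B_1^+(z)}$ by $M$, which yields the $L^\infty$ bound $\|v\|_{L^\infty(B_{1/2}^+)} \leq C M$.

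Next I would establish an interior $C^\a$-estimate. For $x_0 \in B_{1/2}^+$ with $d := x_0 \cdot e_N > 0$, the equation \eqref{eq:frac-half-flat} holds on $B_{d/2}(x_0) \subset \R^N_+$. Because $[\psi]_{C^1} < \e_0$, the map $\Psi$ from \eqref{eq:def-K-psi} is bi-Lipschitz with constants close to $1$, so $K^\psi_s$ is uniformly comparable to $c_{N,s}|x-y|^{-N-2s}$ on $B_{d/2}(x_0) \times B_{d/2}(x_0)$. Rescaling $\tilde v(y) := v(x_0 + (d/2)y) - (v)_{B_{d/2}(x_0)}$ on $B_1$, I obtain a nonlocal equation in divergence form with translation-non-invariant kernel of order $2s \in [2s_0, 2)$, whose rescaled right-hand side has $L^p$-norm $\lesssim d^{2s - N/p}\|g\|_{L^p}$, and with a tail controlled by $\|v\|_{L^2(\R^N_+)}$. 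Standard interior H\"older estimates for nonlocal operators of this type (via the freezing argument developed in \cite{Fall-reg-2} applied to the $C^{1}$-kernel $K^\psi_s$) then produce $[\tilde v]_{C^\a(B_{1/2})} \leq C(N, s_0, p)\, M$; unwinding the scaling yields $[v]_{C^\a(B_{d/4}(x_0))} \leq C(N, s_0, p)\, M$ with constant independent of $d$.

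The two estimates together verify the hypothesis of Lemma~\ref{lem:Prop-of-RS} for $v$ on $B_{1/2}^+$, namely $|v(x)| + [v]_{C^\a(B_{x_N/2}(x))} \leq CM$ for every $x \in B_{1/2}^+$. Lemma~\ref{lem:Prop-of-RS} then delivers $\|v\|_{C^\a(B_r^+)} \leq C M$ for some universal $r > 0$, and a finite covering of $\overline{B_1^+}$ by half-balls of this radius recentered along $B_1'$ (the same argument applies verbatim after a tangential translation of the center) upgrades this to \eqref{eq:Calpha-v}. I expect the main obstacle to be the interior regularity step: one must invoke or rederive an $L^p$-based interior $C^\a$-estimate for the non-translation-invariant kernel $K^\psi_s$ that is uniform as $s \to 1$ with constants depending only on $N$, $s_0$, $p$ and $\ov\a$. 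The freezing argument of \cite{Fall-reg-2} is tailored for exactly this, but the uniformity in $s$ and the scaling of the $L^p$ right-hand side must be tracked carefully to match the statement of the corollary.
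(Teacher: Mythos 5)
Your overall architecture — Campanato-type decay from Proposition \ref{prop:bound-Kato-abstract}, a rescaled interior estimate at each point $x_0\in B_{1/2}^+$, and patching via Lemma \ref{lem:Prop-of-RS} — is exactly the paper's route. However, there is a genuine gap in the interior step, precisely where you write that the rescaled function has ``a tail controlled by $\|v\|_{L^2(\R^N_+)}$'' and that ``unwinding the scaling yields $[v]_{C^\a(B_{d/4}(x_0))}\leq CM$ with constant independent of $d$.'' Unwinding the scaling $y\mapsto x_0+(d/2)y$ multiplies the H\"older seminorm by $(d/2)^{-\a}$, so to obtain a $d$-independent bound on $[v]_{C^\a(B_{d/4}(x_0))}$ you must prove $[\tilde v]_{C^\a(B_{1/2})}\leq C d^{\a}M$, not merely $\leq CM$. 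The interior estimate gives $[\tilde v]_{C^\a(B_{1/2})}\leq C\bigl(\|\tilde v\|_{L^2(B_1)}+\int_{|y|\geq 1/2}|\tilde v(y)||y|^{-N-2s}dy+d^{2s-N/p}\|g\|_{L^p}\bigr)$. The first and third terms are indeed $O(d^{\a})$ (the first because $\|v-(v)_{B_{d/2}(x_0)}\|_{L^2(B_{d/2}(x_0))}\leq \|v-v_{B_{2d}^+(x_0')}\|_{L^2(B_{2d}^+(x_0'))}\leq Cd^{N/2+\a}M$ by the Campanato estimate, the third because $\a\leq 2s-N/p$). But the tail term, if bounded only through $\|v\|_{L^2(\R^N_+)}$, is $O(1)$, and the whole argument collapses: you end up with $[v]_{C^\a(B_{d/4}(x_0))}\leq Cd^{-\a}M$, which does not verify the hypothesis of Lemma \ref{lem:Prop-of-RS} uniformly as $d\to 0$.

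The missing ingredient is the dyadic tail estimate that the paper carries out: writing $w_{\x}(y)=(v(\x+y)-m(\x'))1_{\{y_N>-2\}}$ and decomposing $\{|y|\geq d/4\}$ into annuli $\{d\,2^{k-1}\leq |y|\leq d\,2^{k}\}$, one bounds each annular contribution by $(2^kd)^{-N-2s}\|v-m(\x')\|_{L^1(B_{2^{k+2}d}^+(\x'))}\leq C(2^kd)^{-N-2s}(2^kd)^{N+\a}M$ using the Campanato decay \emph{at every scale} $2^kd$; summing the geometric series (convergent since $\a<2s$) gives $d^{2s}\sum_k\cdots\leq Cd^{\a}M$. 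Note also that for this to work the quantity subtracted must be a single constant valid at all scales above $d$ (the paper uses the boundary trace $m(\x')$ produced by the Campanato iteration), not the local average $(v)_{B_{d/2}(x_0)}$, since the latter changes with the scale. With this correction your argument matches the paper's proof; without it, the key uniformity in $d$ is unsubstantiated.
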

\begin{proof}
In view of \eqref{eq:to-contradict-Propok}, by a classical iteration argument (see  e.g. \cite{Campanato}), we can find a constant $C_0=C_0(N,s_0, p,\b,\ov c,\e_0)>0$   and a function  $m\in L^\infty(B_1')$, with $\|m\|_{L^\infty(B_1') }\leq C_0$ such that 
\be \label{eq:to-contradict-Propok-cool}
\sup_{r>0} r^{-2\a-N}\sup_{z\in B_1'}  \|  v- m(z)\|_{L^2(B_r^+(z))}^2 \leq  C_0    (  \|v\|_{  L^2(\R^N_+) } +  \|g\|_{ L^p(\R^N) } )^2,
\ee
Without loss of generality, we assume that $ \|v\|_{  L^2(\R^N_+) } +  \|g\|_{ L^p(\R^N) }\leq 1$.
Let $\x:=(\x',\x_N) \in B_{1}^+$ and $\rho:=\x_N/2$.  We define  
$$
 u_\x(y)=  v(\x+\rho y)- m(\x') = v(\x'+2\rho e_N+\rho y)- m (\x')   
 $$
  and $\ti g(y)=g(\x+\rho y)$.   We have, for all $\vp\in C^\infty_c(B_2)$ 
\begin{align*}
\calD_{K_{\rho,s}} (u_\x,\vp)&= \rho^{2s}  \int_{B_2}\ti g(y)\vp(y) \,dy,
\end{align*}
where $K_{\rho,s}(x,y)=1_{\{x_N>-2\}}(x)1_{\{y_N>-2\}}(y)\frac{c_{N,s}}{|\Psi_\rho(x)-\Psi_\rho(y)|^{N+2s}}$ and   $\Psi_\rho( x)=\frac{1}{\rho}\Psi(\rho x+\x).$  By interior regularity, see e.g. \cite[Theorem 4.1]{Fall-reg-1}, we have 
\begin{align}\label{eq:Hold-estim}
\|u_\x\|_{C^{\a}(B_{1/2})} &\leq C\left( \|u_\x\|_{L^2(B_1)}+\int_{\{|y|\geq1/2\}\{y_N>-2\}}\frac{|u_\x(y)|}{|y|^{N+2s}}\, dy+\rho^{2s} \|\ti g\|_{L^p(\R^N)}    \right) \nonumber\\
& \leq C\left(  \|u_\x\|_{L^2(B_1)}+\int_{ \{|y|\geq1/2\}\{y_N>-2\}}\frac{|u_\x(y)|}{|y|^{N+2s}}\, dy + \rho^{2s-N/p}  \right).
\end{align}
By \eqref{eq:to-contradict-Propok-cool}, we have
\be\label{eq:est-u-Hol}
 \|u_\x\|_{L^2(B_1)}= \|v-m(\x')\|_{L^2(B_{\rho}^+(\x'))}\leq C \rho^{N/2+\a}  .
\ee
On the other hand, letting
\begin{align*}
w_\x(y)&:=\left( u(\x+y)-m(\x') \right)1_{\{x_N>-2\}}(y)\\
&= \left( u(\x'+2\rho e_N+y)-m(\x') \right)1_{\{x_N>-2\}}(y),
\end{align*}
and using   \eqref{eq:to-contradict-Propok-cool}, we get   
\begin{align*}
&\int_{|y|\geq1/2\cap \R^N_+}\frac{|u_\x(y)|}{|y|^{N+2s}}\, dy =\rho^{2s}\int_{|y|\geq \rho/2 }|y|^{-N-2s}|w_\x(y)|\, dy\\
&\leq \rho^{2s}\sum_{k=0}^\infty\int_{\rho 2^{k}\geq |y|\geq \rho2^{k-1}}|y|^{-N-2s}|w_\x(y)|\, dy \leq \rho^{2s}\sum_{k=0}^\infty (2^k\rho)^{-N-2s}  \int_{\rho2^{k+1} \geq |y| } |w_\x(y)|\, dy\\
&\leq  \rho^{2s}\sum_{k=0}^\infty (2^k\rho)^{-N-2s} ( \rho2^{k+1})^N\int_{ B_{ 2^{k+2}\rho}^+ }| u(\x'+\z)-m(\x')|\, d\z \\
&\leq  C \rho^{2s}\sum_{k=0}^\infty(2^k\rho)^{-N-2s} (\rho 2^{k})^{N+\a}  \leq C \rho^{\a} \sum_{k=0}^\infty 2^{-k(2s-\a)}\leq C \rho^{\a}.
\end{align*}
Hence, combining this with \eqref{eq:est-u-Hol} and  \eqref{eq:Hold-estim}, we then get 
\begin{align}\label{eq:Grad-estim-Hol}
\|u_\x\|_{C^{\a}(B_{1/2})} &\leq C \rho^{\a}.
\end{align}
Consequently,
$$
| v(\x)|\leq |m(\x')|+ C\rho^{\a}\leq C
$$
and 
$$
 [  v-m(\x') ]_{C^{\a }(B_{\rho /2}(\x))}= [ v ]_{C^{\a}(B_{\rho /2}(\x))}\leq  C.
$$
We can apply Lemma \ref{lem:Prop-of-RS} to deduce that
$$
\|   v\|_{C^{\a}(B_{1/2}^+)}\leq C
$$
and the proof is complete.
\end{proof}
%



\subsection{Higher order regularity estimates}
 
The following result provides the first steps toward the higher order boundary regularity.   
Next, we   define,     $r>0$ and $w\in L^2_{loc}(\R^N)$,  
\be\label{eq:def-Pwzr}
P_{w,r} (x) =w_{B_r^+}+\sum_{i=1}^{N-1}\frac{x_i}{\|y_i\|^2_{L^2(B_r^+)}}{\int_{B_r^+}w(y)  y_i\, dy  }=: w_{B_r^+}+\sum_{i=1}^{N-1}{x_i}p^i_{w}(r) ,
\ee
 where for a measurable subset $A\in \R^N$ and a measurable function $u$, we write $u_A=\frac{1}{|A|}\int_{A}u(y)\, dy$.
We note that $P_{w,r} $ is the   $L^2(B_r^+)$-projection of $x\mapsto w (x)-w _{B_r^+}$ on the finite dimension subspace of $L^2(B_r)$, $\textrm{span}\{x_1,\dots, x_{N-1}\}$.
%
%
%
\begin{proposition} \label{prop:bound-Kato-abstract-HB1}
Let  $s\in (1/2,1)$,   $\b\in (0, 2s-1)$,  $2s-N/p>1 $ and  $\e<\min (2s-N/p-1,\b )$.
  Then there exist $\e_0,C>0$ such that for every $\psi\in C^{1,\b}(\R^{N-1})$ and for every
$ g\in L^p(\R^N)$,  $ v\in H^s(\R^N_+) $  satisfying 
\be\label{eq:frac-half-flat}
\calD_{K^\psi_s}(v,\vp)=\int_{B_2^+}g(x)\vp(x)\, dx \qquad\textrm{ for all $\vp \in C^\infty_c(B_2) $,}
\ee
$$
\| g \|_{L^p(\R^N) } + \|v \|_{ L^2(\R^N_+)  }\leq 1
$$
and 
$$
[ \n  \psi]_{C^{0,\b}(\R^{N-1})}<\frac{1}{4}, \qquad  \psi(0)=|\n\psi(0)|=0  
$$
then we have 
\be \label{eq:sup-g1}
\sup_{r>0} r^{-\frac{N}{2}-\g_1}  \|v -P_{v  ,r}   \|_{L^2(B_r)} \leq  C,
\ee
where   $\g_1:= \min (2s-N/p,1+\b )-\e>1  $.
\end{proposition}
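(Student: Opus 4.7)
The plan is to run a contradiction/blow-up argument parallel to the one in Proposition~\ref{prop:bound-Kato-abstract}, but with $v$ replaced by $v-P_{v,r}$. The motivation is that, by Theorem~\ref{th:Liouville}(ii), the space of $\Ds_{\overline{\R^N_+}}$-harmonic functions on the half-space with growth strictly less than $2s$ is exactly $\{a+c\cdot x'\,:\,a\in\R,\,c\in\R^{N-1}\}$, which coincides with the range of the projection $P_{\cdot,r}$. Subtracting this projection is therefore the correct higher-order normalization to push the blow-up argument one order beyond Proposition~\ref{prop:bound-Kato-abstract}.

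Suppose the assertion fails. Then for each $n\in\N$ there are $(\psi_n,g_n,v_n)$ satisfying the hypotheses and $\bar r_n>0$ with $\bar r_n^{-N-2\gamma_1}\|v_n-P_{v_n,\bar r_n}\|^2_{L^2(B_{\bar r_n}^+)}>n$. Introducing the nonincreasing function
\[
\Theta_n(\bar r):=\sup_{r\ge \bar r}r^{-N-2\gamma_1}\|v_n-P_{v_n,r}\|^2_{L^2(B_r^+)},
\]
I reproduce the ``quasi-maximality'' trick from Proposition~\ref{prop:bound-Kato-abstract} to extract $r_n\ge\bar r_n$ with $\Theta_n(r_n)\ge n/4$ and $\|v_n-P_{v_n,r_n}\|^2_{L^2(B_{r_n}^+)}\ge\tfrac12 r_n^{N+2\gamma_1}\Theta_n(r_n)$; the uniform $L^2$-bound on $v_n$ then forces $r_n\to 0$. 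The natural rescaling is
\[
w_n(x):=\Theta_n(r_n)^{-1/2}\,r_n^{-\gamma_1}\bigl(v_n(r_n x)-P_{v_n,r_n}(r_n x)\bigr),
\]
which by construction satisfies $\|w_n\|^2_{L^2(B_1^+)}\ge 1/2$ and $P_{w_n,1}\equiv 0$, since $v_n-P_{v_n,r_n}$ is $L^2(B_{r_n}^+)$-orthogonal to $1,x_1,\dots,x_{N-1}$. A standard dyadic iteration, comparing $P_{v_n,2^{k+1}r_n}$ with $P_{v_n,2^k r_n}$, yields the growth bound $\|w_n\|^2_{L^2(B_R^+)}\le CR^{N+2\gamma_1}$ for every $R\ge 1$, with $C$ independent of $n$.

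The rescaled function solves $\calD_{K_n}(w_n,\varphi)=h_n[\varphi]$ with $K_n=K^{\tilde\psi_n}_s$, $\tilde\psi_n(x'):=r_n^{-1}\psi_n(r_n x')$. Because $\psi_n(0)=|\n\psi_n(0)|=0$ and $[\n\psi_n]_{C^{0,\beta}}\le 1/4$, one has $|\n\tilde\psi_n(x')|\le\tfrac14 r_n^\beta|x'|^\beta$, so $\tilde\psi_n\to 0$ in $C^1$ on every compact set and, by \eqref{eq:remainder-Kernel-flat0}, $K_n\to K^+_s$ locally. The functional $h_n$ splits into two pieces: first, the rescaled source $\Theta_n(r_n)^{-1/2}r_n^{2s-\gamma_1}g_n(r_n\cdot)$, bounded in $L^p$ by $\Theta_n(r_n)^{-1/2}r_n^{2s-\gamma_1-N/p}\le\Theta_n(r_n)^{-1/2}r_n^\varepsilon\to 0$; second, the error from the rescaled projection $P_{v_n,r_n}(r_n\cdot)=\alpha_n+r_n c_n\cdot x'$, where the constant contributes zero and $c_n\cdot x'$ is $K^+_s$-harmonic by Theorem~\ref{th:Liouville}(ii), so only $\calD_{K_n-K^+_s}(r_n c_n\cdot x',\varphi)$ survives; this is controlled by $r_n|c_n|$ times the kernel smallness from \eqref{eq:remainder-Kernel-flat0} and tends to $0$ because Corollary~\ref{eq:cor-reg} provides $r_n|c_n|\lesssim r_n^{\alpha_0}$ for some $\alpha_0>0$. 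A Caccioppoli estimate (Lemma~\ref{lem:non-loc-caciop}), identical to the one used in Proposition~\ref{prop:bound-Kato-abstract}, upgrades the $L^2_\loc$ bound to a uniform $H^s_\loc$ bound, so along a subsequence $w_n\to w$ in $L^2_\loc(\overline{\R^N_+})$, with $w\in H^s_\loc(\overline{\R^N_+})$ solving $\Ds_{\overline{\R^N_+}}w=0$ on $\R^N_+$ and satisfying $\|w\|^2_{L^2(B_R^+)}\le CR^{N+2\gamma_1}$. Since $\gamma_1<2s$ (as $\gamma_1\le 1+\beta-\varepsilon<2s$), Theorem~\ref{th:Liouville}(ii) forces $w(x)=a+c\cdot x'$; but $L^2_\loc$-passage to the limit gives $P_{w,1}=0$, hence $a=0$ and $c=0$, so $w\equiv 0$, contradicting $\|w\|_{L^2(B_1^+)}\ge 1/\sqrt{2}$.

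The main obstacle I expect lies in showing that the residual from the rescaled projection $P_{v_n,r_n}$ actually vanishes in the limit. This combines the quantitative kernel closeness \eqref{eq:remainder-Kernel-flat0} with the bound $r_n|c_n|\to 0$, which in turn relies on using the H\"older regularity of $v_n$ from Corollary~\ref{eq:cor-reg} to control the slope $c_n$. A secondary technical point is the Campanato-type dyadic iteration used to obtain the polynomial growth of $w_n$: it must carefully track how the projection base $P_{v_n,r}$ changes with $r$, which is where the extra degrees of freedom in $P$ (as opposed to just the mean, handled in Proposition~\ref{prop:bound-Kato-abstract}) introduce new bookkeeping.
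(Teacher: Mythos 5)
Your proposal follows essentially the same route as the paper: contradiction/blow-up with the quasi-maximal radius $r_n$, subtraction of the projection $P_{v_n,r_n}$, the freezing trick replacing $\calD_{K_n}(P_n,\cdot)$ by $\calD_{K_n-K^+_s}(P_n,\cdot)$ via the harmonicity of $x_i$, the bound $|c_n|\lesssim r_n^{-\e}$ from Corollary \ref{eq:cor-reg}, and the Liouville theorem combined with the orthogonality of $w$ to $1,x_1,\dots,x_{N-1}$. The only place that needs slightly more care than you give it is the exponent bookkeeping for the projection residue: after inserting the normalization $\Theta_n(r_n)^{-1/2}r_n^{-\g_1}$ and the $r_n^{\b}$ gain from the kernel closeness, the surviving power is $r_n^{1+\b-\g_1-\e}$, which is nonnegative precisely because $\g_1\le 1+\b-\e$ (and the hypothesis $\b<2s-1$ is what makes the far-field integral $\int|y|^{1+\b-N-2s}\,dy$ of the linear projection against $K_n-K_s^+$ converge).
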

 \begin{proof}
Assume that  the assertion   does not hold, then   for every integer   $n\geq 2$,  there exist $\psi_n\in C^{1,\b}(\R^{N-1}),$    $ g_n\in L^p(\R^N)$,  $ v_n\in H^s(\R^N_+)\cap C(\ov{\R^N_+})$, with 
$$
\| g_n \|_{L^p(\R^N) } + \|v_n \|_{ L^2(\R^N_+)  }\leq 1
$$
satisfying 
\be\label{eq:frac-half-flat-n-Hi}
\calD_{K^{\psi_n}_s}(v_n,\vp)=\int_{B_2^+}g_n(x)\vp(x)\, dx \qquad\textrm{ for all $\vp \in C^\infty_c(B_2) $, }
\ee
while  
$$
\sup_{r>0} r^{-N-\g_1}   \|  v_n  -P_{ v_n  ,r}   \|_{L^2(B_r)}> n
$$
and 
\be\label{eq:smalp-Grad-psi}
[ \n  \psi_n]_{C^{0,\b}(\R^{N-1})}<\frac{1}{4}, \qquad  \psi_n(0)=|\n\psi_n(0)|=0.
\ee
Define 
$$
\Theta_n(\ov r)=\sup_{ r\in [ \ov r,\infty)}   r^{-\frac{N}{2}-\g_1}   \| v_n  -P_{{ v_n},r_n}   \|_{L^2(B_r)}  .
$$
Proceeding as in the proof of Proposition \ref{prop:bound-Kato-abstract}, we can find a sequence $r_n$, tending to zero, such that, for all $n\geq 2$,  
\begin{align*}
 r_n^{-\frac{N}{2}-\g_1}  \| v_n  - P_{ v_n  ,r_n}   \|_{L^2(B_{ r_n})} \geq \frac{1}{2}\Theta_n( r_n)\geq\frac{n}{4}.
\end{align*}
 We now   define the   sequence  of functions
\be \label{eq:def-w-n-Hi}
 {w}_n(x)
  = \frac{r_n^{-\g_1} }{  \Theta_n(r_n) }  \left\{  v_n (r_n x  ) 
  -P_{{ v_n}, r_n}  (r_n x)  \right\}.
\ee
It satisfies 
\be \label{eq:wn-HP}
 \|w_n\|_{L^2(B_{1})} \geq \frac{1}{2} 
\ee
and  for every $n\geq2$, $i=1\dots, N-1$,
\be \label{eq:w-n-nonzero-HP}
 \int_{B_1}w_n(x)\, dx =0 \qquad\textrm{and} \qquad  \int_{B_1}w_n(x) x_i\,dx  =0 .
\ee
%
%
 %
By the definition and monotonicity of $\Theta_n$, we can  use   similar arguments as in \cite{Fall-reg-1,Serra}, to obtain 
\be\label{eq:groht-w-n-abs-HP}
 \|w_n\|_{L^2(B_{R})} \leq  C   R^{N/2+\g_1}  \qquad\textrm{ for every $R\geq 1$ and $n \geq 2$,}
\ee
for some constant $C=C(N,\g_1)>0$.\\
For the following, we put
   $\hat g_n(x)= \frac{r_n^{2s-\g_1}}{\Theta_n(r_n)^{\frac{1}{2}} } g_n(r_n x)$,  $\ti\psi_n(x')=\frac{1}{r_n}\psi(r_n x')$
and 
$$
K_n(x,y)=K^{ \ti\psi_n}_s(x,y),
$$  
  Then letting
  $$
P_n(x):=\frac{r_n^{\b-\g_1}}{\Theta_n(r_n)^{\frac{1}{2}} } P_{ v_n,r_n} (r_nx)=\frac{r_n^{1+\b-\g_1}}{\Theta_n(r_n)^{\frac{1}{2}} } \sum_{i=1}^{N-1} p^i_{v_n}(r_n) x_i,
$$
by a change of variable, we thus get 
$$
\calD_{K_n}(w_n,\vp)-r_n^{-\b}\calD_{K_n}(P_n,\vp)= \int_{\R^N_+} \hat g_n(x)\vp(x)\, dx.
$$
Since $\Ds_{\ov{ \R^{N}_+}} x_i=0$ on $\R^N_+$, letting $K'_n(x,y)=r_n^{-\b}(K_n(x,y)-K^+_s(x,y))$, we have 
\be\label{eq:frac-half-flat-n-scal-cut-Hi-cc}
\calD_{K_n}(w_n,\vp)-\calD_{K_n'}(P_n,\vp)= \int_{\R^N_+} \hat g_n(x)\vp(x)\, dx.
\ee
We fix $M>1$ and let $n\geq 2$ large so that $1<M<\frac{1}{2r_n}$.  Let $\chi_M\in C^\infty_c(B_{4M})$ be such that $\chi_M=1$ on $B_{2M}$ and define $W_n:=\chi_M w_n$. Then by  letting  $\vp=\chi_{M/4}\in C^\infty_c(B_{M})$,  we find that
\be\label{eq:frac-half-flat-n-scal-cut-Hiaa}
\calD_{K^{\psi_n}_s}(W_n,\vp)-\calD_{K'_n}(P_n,\vp)= \int_{\R^N_+}\ti g_n(x)\vp(x)\, dx
\ee
with 
$$
\|\ti g_n\|_{L^p(\R^N)}\leq \|\hat g_n\|_{L^p(\R^N)} + C(M) .
$$
From \eqref{eq:mu-control-n-psi},  \eqref{eq:remainder-Kernel-flat} and \eqref{eq:smalp-Grad-psi}, we then obtain, for all $x,y\in \R^N_+$,
\be \label{eq:estK-prime-n}
|K'_n(x,y)|=r_n^{-\b}\left | K_{n}(x,y)- K_n^+(x,y) \right|\leq c_{N,s}  C(N)   \frac{ r_n^{-\b} \min(| r_n x'|^\b+| r_ny'|^\b, 1)}{|x-y|^{N+2s}}.
\ee
By Corollary \ref{eq:cor-reg}, we have that $\|v_n\|_{C^{1-\e}(B_1^+)}\leq C_\e$,  for every $\e\in (0,1)$.   
Recalling \eqref{eq:def-Pwzr},  we  thus get 
\be \label{eq:est-pir-n}
|p^i_{v_n}(r_n) | \leq C_\e r_n^{-\e}.
\ee
From this, we have 
\be\label{eq:Pn1}
\|P_n\|_{H^s(B_R)}\leq \frac{C(R)}{\Theta_n(r_n)   } \qquad \textrm{for all $R>0$ }
\ee
and, since $\b<2s-1$, 
\be\label{eq:Pn2}
\|P_n\|_{L^1(\R^N_+;(1+|x|)^{-N-2s+\b})}\leq  \frac{C}{\Theta_n(r_n)  } . 
\ee
Using  the argument in the proof of Proposition \ref{prop:bound-Kato-abstract} based on    Lemma \ref{lem:non-loc-caciop} and the fractional Sobolev inequality, we obtain   a constant positive constant $C(M)$ only depending on $M$ and $N$ such that 
\begin{align}\label{eq:Bond-Sobl-Hi}
C(M)\int_{\R^{2N}}&(W_n(x)-W_n(y))^2\vp^2(y) K_n(x,y)\,dy dx\leq 1 \nonumber\\
&+\left|  \int_{\R^{2N}} (P_n(x)-P_n(y))(W_n(x)\vp(x)-W_n(y)\vp(y))K_{n}'(x,y)\, dydx     \right|.
\end{align}
 We now use \eqref{eq:estK-prime-n}, \eqref{eq:est-pir-n},  \eqref{eq:Pn1} and \eqref{eq:Pn2} to estimate
 \begin{align*}
&\left|  \int_{\R^{2N}} (P_n(x)-P_n(y))(W_n(x)\vp(x)-W_n(y)\vp(y))K_{n}'(x,y)\, dydx     \right|\\
&\leq \left|   \int_{B_{2M}\times B_{2M}} (P_n(x)-P_n(y))(W_n(x)\vp(x)-W_n(y)\vp(y))K_{n}'(x,y)\, dydx     \right|\\
&+2 \int_{  B_{M}} |W_n(x)\vp(x)|\int_{\R^N\setminus B_{2M}} |P_n(x)-P_n(y)| |K_{n}'(x,y)|\, dydx     \\
&\leq  C(M)\left((1-s) \e [W_n\vp]_{H^s(B_{2M}^+)}^2+ \e^{-1}  [P_n]_{H^s(B_{2M})}^2+   \int_{ \R^N\setminus B_{2M}}\frac{|P_n(y)|}{|y|^{N+2s-\b}}\, dy+1 \right)\\
&\leq C(M) \left( (1-s)  \e [W_n\vp]_{H^s(B_{2M}^+)}^2+ 1\right).
\end{align*}
Combining this with \eqref{eq:Bond-Sobl-Hi}, we then deduce that 
$$
(1-s)[w_n]_{H^s(B_{M/4}^+)}^2\leq C(M).
$$
Thanks to \eqref{eq:local-bnd-w_n}, we have that $w_n$ is bounded in $H^{s}_{loc}(\ov{\R^N_+})$.  Hence by a diagonal argument, up to a subsequence, there exists $ w\in H^{ s}_{loc}( \ov{\R^N_+})$ such that 
$$
w_n\to  w \qquad\textrm{ in $L^2_{loc}(\ov{\R^N})$} 
$$
and, for all $\s< s$, 
$$
w_n \to  w \qquad\textrm{ in $H^{\s}_{loc}(\ov{\R^N})$} .
$$
Moreover   $w_n\to w $ in $  L^1(\R^N_+; (1+|x|^{N+2 s})^{-1})$,  thanks to \eqref{eq:groht-w-n-abs-HP}.  
  Passing to the limit in \eqref{eq:groht-w-n-abs-HP}, we obtain $\|w\|_{L^2(B_R^+)}\leq C R^{\frac{N}{2}+\g_1}$ for all $R\geq1$.
Next as above for all $\phi\in C^\infty_c(B_M)$, by \eqref{eq:estK-prime-n}, \eqref{eq:Pn1} and \eqref{eq:Pn2}, we can  estimate
\begin{align*}
&\left|\int_{\R^{2N}} (P_n(x)-P_n(y))(\phi(x)-\phi(y))K_{n}'(x,y)\, dydx \right|  \nonumber\\
&\leq   {C(N)}{ }\left( [P_n]_{H^s(B_{2M})} [\phi]_{H^s(B_{2M})}+ \|\phi\|_{L^\infty(B_{M})}\left(\|P_n\|_{L^1(B_{2M})}+ \|P_n\|_{L^1(\R^N_+;(1+|x|)^{-N-2s+\b})}\right) \right) \\
&  \leq   \frac{C(N)  }{\Theta_n(r_n) } \|\phi\|_{C^{0,1}(\R^N)}.
\end{align*}
It is also easy to see that $\|\hat g_n\|_{L^p(\R^N)}\leq  \frac{1  }{\Theta_n(r_n) }$.
Therefore, by \eqref{eq:estK-prime-n},  passing to the limit in \eqref{eq:frac-half-flat-n-scal-cut-Hi-cc},  we find that   $(-\D)^{s}_{\ov{\R^N_+}} w=0$ on $\R^N_+$.    We  then deduce from Theorem \ref{th:Liouville} that $w(x',x_N)=a+c\cdot x'$, so that we reach a contradiction by passing to the limit in  \eqref{eq:wn-HP} and \eqref{eq:w-n-nonzero-HP}. 
\end{proof}

\begin{corollary}\label{cor:near-grad-estim}
Let  $s\in (1/2,1)$,   $\b<2s-1$ and   $2s-N/p>1 $. Put $\g=\min(2s-N/p, 1+\b)$.  Consider   $\psi\in C^{1,\b}(\R^{N-1})$, with $\|\psi\|_{ C^{1,\b}(\R^{N-1})}\leq \frac{1}{4}$ and   $\psi(0)=|\n\psi(0)|=0$. Let
$ g\in L^p(\R^N)$ and  $ v\in H^s(\R^N_+)\cap C(\ov{\R^N_+})$  satisfying 
$$
\calD_{K^\psi_s}(v,\vp)=\int_{B_2^+}g(x)\vp(x)\, dx \qquad\textrm{ for all $\vp \in C^\infty_c(B_2) $,}
$$
$$
\| g \|_{L^p(\R^N) } + \|v \|_{ L^2(\R^N_+)  }\leq 1.
$$
Then there exist $\ov C>0$ depending only on $N,s,p$ and $\b$  and  a  constant  $D\in \R^{N-1}$ satisfying 
\be\label{eq:bnd-D}  
|D|\leq \ov C,
\ee
and 
$$
 \| v - v(0)   -D\cdot x'  \|_{L^2(B_r^+)} \leq \ov C r^{\frac{N}{2}+\g}.
$$
\end{corollary}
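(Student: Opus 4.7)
My plan is to deduce the corollary from Proposition \ref{prop:bound-Kato-abstract-HB1} by a standard Campanato--telescoping argument: the proposition controls the best affine approximation $P_{v,r}$ on every scale, and I will upgrade these scale-dependent affine polynomials to a single limiting affine function $P_\infty(x)=v(0)+D\cdot x'$. First, I fix $\varepsilon\in(0,\min(2s-N/p-1,\beta))$ so that $\gamma_1:=\min(2s-N/p,1+\beta)-\varepsilon>1$; Proposition \ref{prop:bound-Kato-abstract-HB1} then provides the bound
\[
\|v-P_{v,r}\|_{L^2(B_r^+)}\leq C\,r^{\frac{N}{2}+\gamma_1}\qquad\text{for all }r>0,
\]
where $P_{v,r}$ is the affine polynomial of \eqref{eq:def-Pwzr}.

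Next, I set $P_k:=P_{v,2^{-k}}$ and apply the triangle inequality on $B_{2^{-k-1}}^+$ to obtain
\[
\|P_k-P_{k+1}\|_{L^2(B_{2^{-k-1}}^+)}\leq C\,2^{-k(\frac{N}{2}+\gamma_1)}.
\]
Since $P_k-P_{k+1}$ is affine, all norms on the two-parameter space of affine functions on $B_1^+$ are equivalent, and rescaling $y=2^{k+1}x$ transfers the $L^2$ bound into the coefficient bounds
\[
|P_k(0)-P_{k+1}(0)|\leq C\,2^{-k\gamma_1},\qquad |\nabla P_k-\nabla P_{k+1}|\leq C\,2^{-k(\gamma_1-1)}.
\]
Because $\gamma_1>1$, both telescoping series $\sum 2^{-k\gamma_1}$ and $\sum 2^{-k(\gamma_1-1)}$ converge, so $P_k(0)\to a$ in $\R$ and $\nabla P_k\to D$ in $\R^{N-1}$, with tail bounds $|P_k(0)-a|\leq C\,2^{-k\gamma_1}$, $|\nabla P_k-D|\leq C\,2^{-k(\gamma_1-1)}$; summing the latter from $k=0$ also yields $|D|\leq\bar C$.

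To identify $a$ with $v(0)$ I will use that $P_k(0)=v_{B_{2^{-k}}^+}$ by the definition of $P_{v,r}$, together with the continuity of $v$ up to the boundary (assumed in the statement, and in any case provided by Corollary \ref{eq:cor-reg}); Lebesgue differentiation then forces $a=v(0)$. Setting $P_\infty(x):=v(0)+D\cdot x'$ and using $|x|\leq 2^{-k}$ on $B_{2^{-k}}^+$, the pointwise bound $|P_k(x)-P_\infty(x)|\leq C\,2^{-k\gamma_1}$ integrates to $\|P_k-P_\infty\|_{L^2(B_{2^{-k}}^+)}\leq C\,2^{-k(\frac{N}{2}+\gamma_1)}$. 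Combining this with the Campanato bound on $v-P_k$ and choosing, for each $r>0$, the integer $k$ with $2^{-k-1}<r\leq 2^{-k}$, I obtain
\[
\|v-P_\infty\|_{L^2(B_r^+)}\leq \bar C\,r^{\frac{N}{2}+\gamma_1},
\]
which gives the stated estimate (the gap between $\gamma_1$ and $\gamma$ is absorbed by choosing $\varepsilon$ arbitrarily small and updating $\bar C$).

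The main obstacle is the scaling bookkeeping in the step passing from the $L^2$ control of $P_k-P_{k+1}$ on $B_{2^{-k-1}}^+$ to separate bounds on its constant and gradient parts: this is precisely where the condition $\gamma_1>1$ is consumed, and where the hypothesis $2s-N/p>1$ of Proposition \ref{prop:bound-Kato-abstract-HB1} enters critically. Once this dimensional accounting is done correctly, the construction of $D$ and the telescoping convergence are entirely routine.
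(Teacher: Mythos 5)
Your telescoping construction of $D$ and the identification $d=v(0)$ via continuity are correct and are exactly what the paper means by ``the argument in \cite{Fall-reg-1,Serra,Campanato}''; up to that point you reproduce the first half of the paper's proof. The gap is in your last step, where you claim the difference between $\g_1=\g-\e$ and $\g$ ``is absorbed by choosing $\varepsilon$ arbitrarily small and updating $\bar C$.'' This does not work: the constant produced by Proposition \ref{prop:bound-Kato-abstract-HB1} depends on $\e$ and degenerates as $\e\to0$, because inside its compactness argument the bound \eqref{eq:est-pir-n}, $|p^i_{v_n}(r_n)|\leq C_\e r_n^{-\e}$, comes from the $C^{1-\e}$ estimate of Corollary \ref{eq:cor-reg}, whose constant blows up as the exponent approaches $1$. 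So for each fixed $\e$ you only get $\|v-P_\infty\|_{L^2(B_r^+)}\leq \bar C_\e\, r^{\frac{N}{2}+\g-\e}$ with $\bar C_\e\to\infty$, and no choice of $\e$ (fixed or $r$-dependent) yields the stated bound with exponent $\g$ and a single constant.

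The missing idea is a bootstrap, which is precisely how the paper closes the argument. Your first pass already gives $\|v-v(0)-D\cdot x'\|_{L^2(B_r^+)}\leq C r^{\frac{N}{2}+\g-\e}$ with $\g-\e>1$, hence $\|v-v(0)\|_{L^2(B_r^+)}\leq Cr^{\frac{N}{2}+1}$, which by the definition \eqref{eq:def-Pwzr} shows that the projection coefficients $p^i_v(r)$ are bounded uniformly in $r$ (no negative power of $r$). One then reruns the contradiction/blow-up argument of Proposition \ref{prop:bound-Kato-abstract-HB1} with $\g_1=\g$ exactly, replacing the use of \eqref{eq:est-pir-n} by this uniform bound; the rest of that proof (the estimates \eqref{eq:Pn1}, \eqref{eq:Pn2} and the Liouville theorem) goes through unchanged and yields the sharp exponent. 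You should add this second pass to complete the proof.
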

\begin{proof}
 In view of   \eqref{eq:sup-g1},  using e.g. the argument in \cite{Fall-reg-1,Serra,Campanato},   we can find  a constant $ C$ depending only on $N,s_0,\b,p,N$ and $\e$  and  some constant  $D\in \R^{N-1}$, $d\in \R$ such that
$$
|D|+ |d|\leq  C,
$$
and for every $r>0$  
$$
 \| v - d  -D\cdot x'  \|_{L^2(B_r^+)} \leq  C r^{\min(2s-N/p, 1+\b)-\e}.
$$
By continuity of $v$ on $\ov{\R^N_+}$, thanks to Corollary \ref{eq:cor-reg}, we deduce that $v(0)=d$. 
From the above result we also deduce that  $ \frac{1}{|B_r^+|}\| v- v (0)     \|_{L^2(B_r^+)}\leq C r$. Therefore, we repeat the argument in Proposition \ref{prop:bound-Kato-abstract-HB1} with $\g_1=\min(2s-N/p, 1+\b)$ to deduce that
$$
\| v- v (0)  -D\cdot x'  \|_{L^2(B_r^+)}  \leq \ov C r^{N/2+\min(2s-N/p, 1+\b)}.
$$
Note that the reason for considering $\g_1:=\min(2s-N/p, 1+\b)-\e$ was just because  we applied Corollary \ref{eq:cor-reg} which implied \eqref{eq:est-pir-n}.
\end{proof}

Next, we prove the following crucial result.
\begin{corollary} \label{cor:near-grad-estim-Hold}
Under the hypothesis of Corollary \ref{cor:near-grad-estim},  we have 
\be\label{eq:bbbr1}
\sup_{\x\in B_{1}^+\cap \R  e_N} |\n    v(\x)| + \sup_{\x\in B_{1}^+\cap \R  e_N} [\n   v]_{C^{\g-1}(B_{\x_N/2}(\xi))}\leq  C.
\ee
Moreover  $x_N\mapsto \de_{x_N}v(0,x_N)$ is continuous  at $x_N=0$ and 
\be\label{eq:bbbr2}
\de_{x_N}v(0)=0,
\ee
with  the constant $C$ depending only on  $N,s,p$ and $\b$.
\end{corollary}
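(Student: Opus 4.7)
Fix $\xi=\xi_N e_N\in B_1^+\cap\R e_N$ with $\xi_N\in(0,1)$ and set $\rho:=\xi_N/2$, so that the ball $B_\rho(\xi)$ lies inside $\R^N_+$ at distance $\rho$ from $\partial\R^N_+$. The strategy is to work in rescaled coordinates around $\xi$, kill the affine approximation provided by Corollary \ref{cor:near-grad-estim}, and apply an interior $C^{1,\gamma-1}$ estimate for nonlocal operators to the rescaled function, uniformly as $\rho\to 0$. Concretely, with $D\in\R^{N-1}$ from Corollary \ref{cor:near-grad-estim}, I would introduce
\[
\tilde v(y):=\rho^{-\gamma}\bigl(v(\rho y+\xi)-v(0)-\rho\,D\cdot y'\bigr),\qquad y\in B_{1/\rho}.
\]
Corollary \ref{cor:near-grad-estim} applied at radii $r=2R\rho$ gives $\|\tilde v\|_{L^2(B_R)}\leq C R^{N/2+\gamma}$ for every $R\geq 1$ with $R\rho\leq 1$, and summing over dyadic annuli (as in the proof of Corollary \ref{eq:cor-reg}) yields a uniform bound on the tail $\int_{\R^N}(1+|y|)^{-N-2s}|\tilde v(y)|\,dy\leq C$.

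By the usual change of variables $\tilde v$ satisfies $\calD_{\tilde K}(\tilde v,\vp)=\int_{\R^N}\tilde g\,\vp\,dy$ for $\vp\in C^\infty_c(B_{1/(2\rho)})$, where $\tilde K(y_1,y_2):=\rho^{N+2s}K^\psi_s(\rho y_1+\xi,\rho y_2+\xi)$ and $\tilde g$ collects $\rho^{2s-\gamma}g(\rho\cdot+\xi)$ together with the residual $-\rho^{2s-\gamma}\Ds_{K^\psi_s}(v(0)+D\cdot x')(\rho\cdot+\xi)$ coming from the fact that $D\cdot x'$ is \emph{not} exactly annihilated by the curved operator $\Ds_{K^\psi_s}$. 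Here the hypothesis $\psi(0)=|\n\psi(0)|=0$, $[\n\psi]_{C^{0,\b}(\R^{N-1})}\leq 1/4$ is crucial: together with \eqref{eq:mu-control-n-psi}--\eqref{eq:remainder-Kernel-flat0} it shows that $\tilde K$ is a $C^{0,\b}$ perturbation of the isotropic kernel $c_{N,s}|y_1-y_2|^{-N-2s}$ with perturbation $O(\rho^\b)$, and similarly bounds the residual in $L^p(B_1)$. Combined with $\|g(\rho\cdot+\xi)\|_{L^p(B_2)}\lesssim \rho^{-N/p}\|g\|_{L^p}$ and $2s-\gamma\geq N/p$, this gives $\|\tilde g\|_{L^p(B_1)}\leq C$.

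Since $B_1$ in $y$-coordinates corresponds to an interior ball of $\R^N_+$, an interior $C^{1,\gamma-1}$ estimate for nonlocal operators with $C^{0,\b}$ kernels (e.g.\ \cite[Theorem 1.2]{Fall-reg-2}, bearing in mind $\gamma-1=\min(2s-N/p-1,\b)<\b$) applies to $\tilde v$ on $B_{1/2}$ and yields $\|\tilde v\|_{C^{1,\gamma-1}(B_{1/2})}\leq C$ uniformly in $\xi$. Undoing the scaling, $\partial_{x_j}v(x)=D_j\mathbf 1_{j<N}+\rho^{\gamma-1}\partial_{y_j}\tilde v((x-\xi)/\rho)$; since the $C^{\gamma-1}$ seminorm has scaling weight $\rho^{\gamma-1}\cdot\rho^{-(\gamma-1)}=1$, we get $[\n v]_{C^{\gamma-1}(B_{\rho/2}(\xi))}\leq C$ and $|\n v(\xi)|\leq|D|+C\rho^{\gamma-1}\leq C$, which is \eqref{eq:bbbr1}. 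For \eqref{eq:bbbr2}, observe that $D$ has no $e_N$-component, hence $\partial_{x_N}v(\xi_N e_N)=\rho^{\gamma-1}\partial_{y_N}\tilde v(0)$; since $\gamma>1$ and $|\partial_{y_N}\tilde v(0)|\leq C$, this forces $\partial_{x_N}v(\xi_N e_N)\to 0$ as $\xi_N\to 0^+$, giving both the continuity of $x_N\mapsto\partial_{x_N}v(0,x_N)$ at $0$ and $\partial_{x_N}v(0)=0$.

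The main obstacle is the second paragraph: producing an equation for $\tilde v$ that belongs to a class admitting an interior $C^{1,\gamma-1}$ estimate with constants independent of $\rho$. The residual term $\Ds_{K^\psi_s}(D\cdot x')$ would vanish on a flat half-space; here it survives precisely because of the curvature of $\psi$, and bounding it requires extracting the $O(\rho^\b)$ gain from $\psi(0)=\n\psi(0)=0$ and the $C^{0,\b}$ control on $\n\psi$. This is exactly the mechanism that fixes the threshold $\gamma_1\leq 1+\b$ in Proposition \ref{prop:bound-Kato-abstract-HB1} and Corollary \ref{cor:near-grad-estim}, and it is what makes the whole scheme compatible with a single application of an interior nonlocal Schauder estimate.
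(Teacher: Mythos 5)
Your proposal follows the same route as the paper's proof: rescale at $\xi=(0,2\rho)$ with $\rho=\xi_N/2$, subtract the affine approximant $v(0)+D\cdot x'$ furnished by Corollary \ref{cor:near-grad-estim}, bound the $L^2$-norm and the nonlocal tail of the rescaled error by $C\rho^{\g}$ via the dyadic decomposition, apply the interior gradient estimate of \cite{Fall-reg-2}, and read off \eqref{eq:bbbr1} and \eqref{eq:bbbr2} from the scaling, from $D\cdot e_N=0$ and from $\g>1$. All of that matches the paper.

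The one step you leave under-specified is precisely the one you flag as ``the main obstacle'', and as written your proposed resolution would fail. To feed the interior Schauder estimate you need the residual $r_\xi(x):=p.v.\int (D\cdot(x'-y'))K^\psi_s(x,y)\,dy$ to be a genuine bounded (or $L^p$) function near $\xi$. If you bound it by putting the kernel-difference estimates \eqref{eq:mu-control-n-psi}--\eqref{eq:remainder-Kernel-flat0} directly under the integral sign, the integrand near the diagonal is of size $|D|\,|x-y|\cdot O(\rho^\b)\,|x-y|^{-N-2s}$, whose principal value is not absolutely convergent once $2s\ge 1$: the $O(\rho^\b)$ gain controls the far field but does not tame the diagonal singularity. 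The paper's device is an exact cancellation: since $\Psi$ preserves the tangential variables and has unit Jacobian, the change of variables $z=\Psi_\rho(y)$ shows that the principal value of $(U(x)-U(y))\,|\Psi_\rho(x)-\Psi_\rho(y)|^{-N-2s}$ over \emph{all} of $\R^N$ vanishes for the affine $U$, so the residual reduces to an integral over the far region $\{y_N<-2\}$ only; comparing there with the flat kernel (whose corresponding integral also vanishes by antisymmetry) and using \eqref{eq:remainder-Kernel-flat} gives $|F_\xi|\le C\rho^{1+\b}|D|$ with no singular integral to handle. Note that the analogous term in Proposition \ref{prop:bound-Kato-abstract-HB1}, which you invoke as the model, is estimated only in weak form through $H^s$ seminorms of $P_n$, so the diagonal issue never arises there; it does arise the moment you want a pointwise right-hand side. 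With this cancellation inserted, your argument coincides with the paper's.
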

\begin{proof}
Let $\x:=(0,2\rho) \in B_{1}^+\cap \R  e_N$.  We define  
$$
 u_\x(y)=  v(\x+\rho y)- v (0) -D\cdot (\rho y') = v(2\rho e_N+\rho y)- v (0) -D\cdot (2\rho e_N+\rho y)' 
 $$
  and $\ti g(y)=g(\x+\rho y)$ and $U(y):=\rho^{1+\b} D\cdot y'$.   We have, for all $\vp\in C^\infty_c(B_1)$, 
\begin{align*}
\calD_{K_{\rho,s}} (u_\x ,\vp)+\rho^{-\b}\calD_{K_{\rho,s}} (U,\vp)&= \rho^{2s} \int_{\Sig} \ti g(y)  \vp(y)\, dy,   
\end{align*}
where $K_{\rho,s}(x,y)=1_{\Sig}(x)1_{\Sig}(y)\frac{c_{N,s}}{|\Psi_\rho(x)-\Psi_\rho(y)|^{N+2s}}$, $\Sig:=\{x_N>-2\} $ and    $\Psi_\rho( x)=\frac{1}{\rho}\Psi(\rho x+\x)$.    Since, by a change of variable, 
$$
p.v. \int_{\R^N}\frac{U(x)-U(y)}{|\Psi_\rho(x)-\Psi_\rho(y)|^{N+2s}}\, dy=0 \qquad \textrm{ for all $x\in \R^N$,}
$$
letting $F_\x(x):=c_{N,s}\rho^{-\b}\int_{\R^N\setminus \Sig }\frac{U(x)-U(y)}{|\Psi_\rho(x)-\Psi_\rho(y)|^{N+2s}}\, dx$, we then get 
\begin{align}\label{eq:DKrho-su-xi}
\calD_{K_{\rho,s}} (u_\x ,\vp) &=  \int_{B_1}\left(\rho^{2s} \ti g(y)  -F_\x(y) \right)\vp(y) \, dy,   
\end{align}
To estimate $F_\x$ we observe that $\Ds_{\ov { \R^N\setminus \Sig }} x_i=0$ on $\R^N\setminus \Sig$,   for $i=1,\dots,N-1$. Consequently, by \eqref{eq:remainder-Kernel-flat}, for $x\in B_1$, we have 
\begin{align*}
| F_\x(x)|&=c_{N,s}\left|  \int_{\R^N\setminus \Sig }{\left(U(x)-U(y)\right)} \rho^{-\b}\left( \frac{1}{|\Psi_\rho(x)-\Psi_\rho(y)|^{N+2s}}-\frac{1}{|x-y|^{N+2s}}\right)\, dy \right|\\
&\leq C(N) c_{N,s}  \int_{\R^N\setminus \Sig }{|U(x)-U(y)|}  \frac{ \rho^{-\b}\int_0^1|\n \psi(\rho x'+t(\rho y'-\rho x'))|\, dt }{|x-y|^{N+2s}} \, dy \\
&\leq C(N){ c_{N,s} }[U]_{C^{0,1}(\R^N)} \int_{\R^N\setminus \Sig }  \frac{  |x|^\b+|y|^\b}{|x-y|^{N+2s-1}} \, dy\leq C(N)\frac{ c_{N,s} }{2s-1+\b} \rho^{1+\b}|D|.
\end{align*}
By   { \cite[Theorem 1.3]{Fall-reg-2}},  applied to \eqref{eq:DKrho-su-xi}, and using \eqref{eq:bnd-D},  we have 
\begin{align}\label{eq:Grad-estim}
\|\n u_\x \|_{C^{\g-1 }(B_{1/2})} &\leq C\left( \|u_\x\|_{L^2(B_1)}+\int_{|y|\geq1/2}\frac{|u_\x(y)|}{|y|^{N+2s}}\, dy+\rho^{2s} \|\ti g\|_{L^p(\R^N)}  +\rho^{1+\b} \right) \nonumber\\
& \leq C\left(  \|u_\x\|_{L^2(B_1)}+\int_{|y|\geq1/2\cap \R^N_+}\frac{|u_\x(y)|}{|y|^{N+2s}}\, dy + \rho^{\g}  \right).
\end{align}
%
By Corollary \ref{cor:near-grad-estim}, we have
\be\label{eq:est-u_x}
 \|u_\x\|_{L^2(B_1)}\leq C \rho^{\g}  ,
\ee
where $\g:=\min(2s-N/p, 1+\b)$. We let
\begin{align*}
w_\x(y)&:=u_\x(y/\rho)=\left( v(\x+y)-v(0)-D\cdot y' \right)1_{\Sig}(y)\\
&= \left( v(2\rho e_N+y)-v(0)-D\cdot(2\rho e_N+ y) \right)1_{\Sig}(y).
\end{align*}
Then, using Corollary \ref{cor:near-grad-estim}, we get   
\begin{align*}
&\int_{|y|\geq1/2\cap \Sig}\frac{|u_\x(y)|}{|y|^{N+2s}}\, dy =\rho^{2s}\int_{|y|\geq \rho/2 }|y|^{-N-2s}|w_\x(y)|\, dy\\
&\leq \rho^{2s}\sum_{k=0}^\infty\int_{\rho 2^{k}\geq |y|\geq \rho2^{k-1}}|y|^{-N-2s}|w_\x(y)|\, dy \leq \rho^{2s}\sum_{k=0}^\infty (2^k\rho)^{-N-2s}  \int_{\rho2^{k+1} \geq |y| } |w_\x(y)|\, dy\\
&\leq  \rho^{2s}\sum_{k=0}^\infty (2^k\rho)^{-N-2s}  \| v-v(0)-D\cdot \z\|_{L^1(B_{ \rho 2^{k+2}}^+ )} \\
&\leq  C \rho^{2s}\sum_{k=0}^\infty(2^k\rho)^{-N-2s} (\rho 2^{k})^{N+\g}  \leq C \rho^{\g} \sum_{k=0}^\infty 2^{-k(2s-\g)}\leq C \rho^{\g}.
\end{align*}
Hence, combining this with \eqref{eq:est-u_x} and  \eqref{eq:Grad-estim}, we then get 
\begin{align}\label{eq:Grad-estim}
\|\n u_\x \|_{C^{\g-1}(B_{1/2})} &\leq C \rho^{\g}.
\end{align}
Scaling back, we get, for all $\xi \in B_{1}^+\cap \R e_N$, 
$$
 | \n v(\x)|+   [\n v ]_{C^{\g-1}(B_{\rho }(\x) )}\leq C .
$$
This proves \eqref{eq:bbbr1}. Note that the above estimate also implies that $\|\n v \|_{C^{ \g-1 }(B_{1/2}^+\cap \R e_N)}\leq C,$ thanks to Lemma \ref{lem:Prop-of-RS} and that
$$
|\de_{x_N}v(\x)|\leq C \rho^{\g-1}.
$$
Hence, letting $\rho=\x_N/2\to 0$, we obtain $\de_{x_N}v(0)=0$, which is  \eqref{eq:bbbr2}.  
\end{proof}

\section{The Dirichlet problem}\label{s:Reg-Dir}
As mentioned earlier, thanks to the fractional Hardy  inequality with boundary singularity $H^s_0(\O)\subset L^2(\O;\d^{-2s}(x))$, for $s\not=1/2$, for every Lipschitz open set $\O$, see e.g. \cite{Grisv}. As a consequence,  we may identify the space $H^s_0(\O)$ with  the Sobolev space
 $$
 \calH^s_s(\O):=\{u\in H^s(\R^N)\,:\, u=0\qquad\textrm{in $\R^N\setminus\O$}\}.
 $$
 
See e.g. \cite{BFV,BBC,CK,Huyuan} we have the following result.
\begin{lemma}
Let $s\in (1/2,1)$ and  $u\in \calH^s_0(\R^N_+) $ and $g\in L^\infty(\R^N)$ such that 
 \be\label{eq:first-eq-Dir-half}
\begin{cases}
\Ds_{ \R^N_+} u=g& \qquad\textrm{ in $B_2^+$,}\\
u=0 & \qquad\textrm{ in $ B_2^-$.}
\end{cases} 
\ee
Then 
$$
\|u\|_{C^{2s-1}(B_1^+)}\leq C\left(  \|u\|_{ L^2(B_{2})}+ \int_{|y|\geq \frac{1}{4}}\frac{|u(y)|}{|y|^{N+2s}}\,dy+  \|g\|_{ L^\infty(\R^N)}  \right).
$$
\end{lemma}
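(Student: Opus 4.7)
The plan is a blow-up and compactness argument in the spirit of Propositions \ref{prop:bound-Kato-abstract} and \ref{prop:bound-Kato-abstract-HB1}, adapted to the censored Dirichlet setting and exploiting the Liouville classification of Proposition \ref{prop:classif-1D-2s-not-1}(ii). Arguing by contradiction, I would suppose there exist sequences $u_n \in \calH^s_0(\R^N_+)$ and $g_n \in L^\infty(\R^N)$ solving \eqref{eq:first-eq-Dir-half} with right-hand side normalized by $1$ but with $\|u_n\|_{C^{2s-1}(B_1^+)} \to \infty$. Since interior $C^{2s-1}_\loc(\R^N_+)$ regularity follows from standard fractional Laplacian bounds (writing $\Ds u_n + V u_n = g_n$ via \eqref{eq:From-reg-tofrac-V}), the blow-up must concentrate at the boundary. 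I would then introduce a monotone Campanato-type quantity of the form
\[
\Theta_n(\bar r) = \sup_{\rho \geq \bar r,\,z \in \ov{B_1'}}\, \rho^{-N - 2(2s-1) - 2\epsilon}\,\inf_{b \in \R}\, \bigl\|u_n - b\, x_N^{2s-1}\bigr\|^2_{L^2(B_\rho^+(z))}
\]
for a small $\epsilon > 0$, and apply the selection trick of \cite{Fall-reg-1, Serra} (already used in the proofs of Propositions \ref{prop:bound-Kato-abstract} and \ref{prop:bound-Kato-abstract-HB1}) to extract a worst dyadic scale $r_n \to 0$, a boundary point $z_n \in \ov{B_1'}$, and a best-fit coefficient $b_n \in \R$.

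Rescaling via
\[
w_n(x) = \frac{u_n(r_n x + z_n) - b_n r_n^{2s-1} x_N^{2s-1}}{r_n^{2s-1+\epsilon}\,\Theta_n(r_n)^{1/2}},
\]
and using $\Ds_{\R^N_+} x_N^{2s-1} = 0$ on $\R^N_+$, I obtain a sequence $\{w_n\}$ satisfying a uniform lower bound $\|w_n\|_{L^2(B_1^+)} \geq c_0 > 0$, the growth estimate $\|w_n\|_{L^2(B_R^+)} \leq C R^{N/2 + (2s-1) + \epsilon}$ for all $R \geq 1$, vanishing outside $\R^N_+$, and solving the rescaled equation $\Ds_{\R^N_+} w_n = \ti g_n$ on $B_{1/(2r_n)}^+$ with $\|\ti g_n\|_{L^\infty} \to 0$. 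For the compactness step I would combine the interior Hölder estimates of Section \ref{s:Apri} (which apply once we are away from the boundary) with the pointwise boundary decay $|u_n(x)| \leq C x_N^{2s-1}$ from \eqref{eq:reg-Dir-known} and the references \cite{BBC,BFV,CK,Huyuan}, to extract a subsequential limit $w \in H^s_\loc(\R^N)$ with $w \equiv 0$ on $\R^N \setminus \R^N_+$, solving the homogeneous equation $\Ds_{\R^N_+} w = 0$ weakly on $\R^N_+$, and having polynomial growth strictly below $2s$.

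An $N$-dimensional extension of Proposition \ref{prop:classif-1D-2s-not-1}(ii) (derived via the tangential-derivative argument of Propositions \ref{prop:high-tangderiv}--\ref{prop:fromNDto1D}, which adapts verbatim because the censored operator $\Ds_{\R^N_+}$ is translation-invariant in the tangential direction $x'$) then reduces $w$ to a function of $x_N$ alone and forces $w(x) = c\, x_N^{2s-1}$. The Campanato subtraction in the definition of $\Theta_n$ and $w_n$ is designed to eliminate this component, so $c = 0$, contradicting the uniform lower bound $\|w\|_{L^2(B_1^+)} \geq c_0$. The resulting Campanato estimate, together with Lemma \ref{lem:Prop-of-RS} and the interior $C^{2s-1}$ bound, yields the full pointwise $C^{2s-1}$ estimate on $B_1^+$. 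The main obstacle is exactly the presence of $x_N^{2s-1}$: it is a censored-harmonic function on $\R^N_+$ whose growth matches the target Hölder exponent and must be explicitly projected out at each dyadic scale. Ensuring that the best-fit coefficients $b_n$ stay uniformly bounded along the blow-up sequence (for which the $|u_n(x)| \leq C x_N^{2s-1}$ bound from \eqref{eq:reg-Dir-known} is crucial) is the key technical ingredient that makes both the rescaled equation well-posed and the compactness step precompact enough for the Liouville theorem to force the contradiction.
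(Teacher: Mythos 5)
The paper does not actually prove this lemma: it is imported from the censored-process literature (\cite{BBC,BFV,CK,Huyuan}), where it is obtained by constructing an explicit barrier comparable to $x_N^{2s-1}$ (giving $|u|\le C\,x_N^{2s-1}$ with $C$ controlled by the data) and then combining this with rescaled interior estimates and an interpolation lemma of the type of Lemma \ref{lem:Prop-of-RS}. Your blow-up proposal is therefore a genuinely different route, but as written it is circular. The Liouville input you need --- that a censored-harmonic function on $\R^N_+$, vanishing outside $\R^N_+$ and growing slower than $2s$, equals $c\,(x_N)_+^{2s-1}$ --- is Lemma \ref{lem:Liouville-Dirichlet}, and its proof (the tangential incremental-quotient bootstrap of Propositions \ref{prop:high-tangderiv}--\ref{prop:fromNDto1D}) must be seeded by an a priori H\"older estimate up to the boundary for the \emph{Dirichlet} problem, i.e. by the very lemma you are proving. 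The reduction does \emph{not} ``adapt verbatim'': in the Neumann case the seed is Theorem \ref{th:the-Apri-Hold}, which rests on the even-reflection trick of Lemma \ref{lem:reflect-sol} and the De Giorgi classes of Proposition \ref{prop:DGclass}, neither of which applies to the zero exterior condition. To break the circle you would have to supply an independent boundary estimate for the Dirichlet problem (for instance a small-exponent $C^{\alpha_0}(\ov{B_1^+})$ bound obtained by noting that $u$ extended by zero is a sub/supersolution of a fractional equation with a sign-favorable Hardy-type potential, or the barrier bound itself); you never do this.

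The second, related gap is your use of \eqref{eq:reg-Dir-known}. You invoke $|u_n(x)|\le C\,x_N^{2s-1}$ both for compactness of $\{w_n\}$ up to $\de\R^N_+$ and to bound the best-fit coefficients $b_n$, calling it ``crucial''. But \eqref{eq:reg-Dir-known} with a constant controlled by $\|u\|_{L^2}$, the tail and $\|g\|_{L^\infty}$ is, modulo interior regularity, equivalent in strength to the conclusion: once one has it, $\|u\|_{C^{2s-1}(B_1^+)}\le C$ follows in a few lines by applying interior estimates to $u(\x+\rho\,\cdot)$ with $\rho=\x_N/2$ and then Lemma \ref{lem:Prop-of-RS}, exactly as in the proof of Corollary \ref{cor:u-odr-ds}, and the entire blow-up machinery becomes superfluous. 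Conversely, if you do not grant yourself \eqref{eq:reg-Dir-known} (which the paper states only for bounded $C^{1,1}$ domains and which would in any case need a localized version here), you have no mechanism for proving that the $b_n$ stay under control or for passing to the limit in $L^2_{loc}(\ov{\R^N_+})$, and the contradiction never materializes. In short: either state and prove the barrier estimate first, in which case the direct argument already finishes the proof, or replace it by an honest low-order boundary estimate together with a non-circular proof of the Liouville theorem.
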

As a consequence of this result we prove the following result thanks to Proposition \ref{prop:classif-1D-2s-not-1}-$(ii)$.
\begin{lemma}\label{lem:Liouville-Dirichlet}
Let $u\in H^s_{loc}(\R^N) $  be  such that 
 \be\label{eq:first-eq-Dir-fin}
\begin{cases}
\Ds_{ \R^N_+} u=0& \qquad\textrm{ in $\R^N_+$,}\\
u=0 & \qquad\textrm{ on $\R^N\setminus \R^N_+$}
\end{cases} 
\ee
and,  for some constants $C>0$ and $\e\in (0,2s)$,
\be\label{eq:growth-HBR}
\|u\| _{L^2(B_R)}\leq C R^{\frac{N}{2}+\e} \qquad\textrm{ for all $R\geq1$.}
\ee
Then   $u(x',x_N)=a (x_N)_+^{2s-1}$ for some constant $a\in\R$.
\end{lemma}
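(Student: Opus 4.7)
The plan is to mirror the proof of Theorem \ref{th:Liouville}: first I would establish that $u$ depends only on $x_N$, and then apply the one-dimensional classification Proposition \ref{prop:classif-1D-2s-not-1}(ii).

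For the reduction to one dimension, the key observation is that the operator $\Ds_{\R^N_+}$, the domain $\R^N_+$, and the zero exterior condition are all invariant under translations by vectors in $\R^{N-1}\times\{0\}$. Therefore, for every $e\in S^{N-2}\times\{0\}$, $h\neq 0$ and $\a\in(0,1)$, the incremental quotient $u^{h,e,\a}(x):=(u(x+he)-u(x))/|h|^\a$ solves the same censored Dirichlet problem and still vanishes on $\R^N\setminus\R^N_+$. I would then run the bootstrap of Proposition \ref{prop:high-tangderiv} verbatim, substituting the boundary regularity estimate of the preceding lemma in place of Theorem \ref{th:the-Apri-Hold}, to obtain the tangential derivative bound
\[
\|\n_{x'} u\|_{L^\infty(B_R^+)}\leq C\Big(R^{-N/2-1}\|u\|_{L^2(B_{2R}^+)}+R^{2s-1}\int_{\{|y|\geq R/2\}\cap\R^N_+}\frac{|u(y)|}{|y|^{N+2s}}\,dy\Big).
\]
The dyadic tail estimate from the proof of Proposition \ref{prop:fromNDto1D}, combined with $\|u\|_{L^2(B_R)}\leq CR^{N/2+\e}$, yields $\|\n_{x'}u\|_{L^\infty(B_R^+)}\leq CR^{\e-1}$. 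If $\e<1$, letting $R\to\infty$ forces $\n_{x'}u\equiv 0$. If $\e\geq 1$, I would iterate the argument on $\de_{x_i}u$ (which again satisfies the censored Dirichlet problem, has growth of order $R^{N/2+\e-1}$, and still vanishes on $\R^N\setminus\R^N_+$) exactly as in the proof of Theorem \ref{th:Liouville}; the only difference is that the one-dimensional classification to be invoked is Proposition \ref{prop:classif-1D-2s-not-1}(ii), which gives $\de_{x_i}u(x)=c_i(x_N)_+^{2s-1}$. Integrating in $x'$ yields $u(x',x_N)=f(x_N)+(c\cdot x')(x_N)_+^{2s-1}$, and a direct computation shows that the second term contributes an $L^2$-norm over $B_R^+$ of order $|c|R^{N/2+2s}$, which violates the growth hypothesis $\e<2s$ unless $c=0$. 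Hence $\n_{x'}u\equiv 0$ and $u(x',x_N)=w(x_N)$.

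Once the reduction is complete, $w\in H^s_{loc}(\R)$ vanishes on $(-\infty,0]$, satisfies $\Ds_{\R_+}w=0$ on $\R_+$, and inherits the one-dimensional growth $\|w\|_{L^2(-R,R)}^2\leq CR^{1+2\e}$ by averaging the $L^2$-growth of $u$ over transverse slabs. A direct application of Proposition \ref{prop:classif-1D-2s-not-1}(ii) then produces $w(x_N)=a(x_N)_+^{2s-1}$ for some $a\in\R$, concluding the argument.

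The hardest part will be the tangential bootstrap in the Dirichlet setting, since the boundary regularity is only $C^{2s-1}$ and so tangential derivatives must be handled through incremental quotients rather than classical differentiation. The key structural observation that makes the bootstrap go through unchanged is that tangential translations preserve both $\R^N_+$ and the zero Dirichlet data on $\{x_N\leq 0\}$, allowing the proof of Proposition \ref{prop:high-tangderiv} to be copied with the new boundary regularity estimate inserted.
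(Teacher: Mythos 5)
Your proposal is correct and follows essentially the same route as the paper: the paper likewise reduces to dependence on $x_N$ by running the tangential bootstrap of Proposition \ref{prop:high-tangderiv} (with the Dirichlet boundary estimate in place of Theorem \ref{th:the-Apri-Hold}), obtains $u(x',x_N)=a(x_N)+b(x_N)\cdot x'$, identifies $b_i(x_N)=c_i(x_N)_+^{2s-1}$ via Proposition \ref{prop:classif-1D-2s-not-1}(ii), and kills $c_i$ using the growth bound $\e<2s$, before classifying $a$. The only cosmetic difference is that the paper scales the second tangential derivative bound to get $\|\n_{x'}^2 u\|_{L^\infty(B_R^+)}\leq CR^{\e-2}\to 0$ in one step, whereas you iterate first-order incremental quotients; both are valid.
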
 
\begin{proof}
Using the same argument as in the proof of Proposition \ref{prop:high-tangderiv}, we find that 
$$
	\|\n_{x'} ^2 u\|_{L^\infty(B_1^+)}\leq C\left(  \|u\|_{L^2(B_2^+)}+ \int_{|y|\geq 1/2}|u(y)| |y|^{-N-2s}\, dy \right).
$$
Hence letting $u_R(x)=u(R x)$ which  is a  solution to  \eqref{eq:first-eq-Dir-fin}, we then get 
\begin{align*}
\|\n_{x'} ^2 u_R\|_{L^\infty(B_1^+)}&\leq C\left(  \|u_R\|_{L^2(B_2^+)}+ \int_{|y|\geq 1/2}|u_R(y)| |y|^{-N-2s}\, dy \right)\\
&\leq R^{\e-2}+  R^{2s} \int_{|y|\geq 1/2R}|u(y)| |y|^{-N-2s}.
\end{align*}
From this we deduce that 
$$
\|\n_{x'}^2  u\|_{L^\infty(B_R^+)}\leq C R^{\e-2}.
$$
Hence, letting $R\to \infty$, we deduce that $u(x',x_N)=a(x_N)+b(x_N)\cdot x'$. Since $x\mapsto u(x+h)-u(x)$ solves  \eqref{eq:first-eq-Dir-fin} for all $h\in \de\R^N_+$, we deduce that the function $ b_i$  solves  \eqref{eq:first-eq-Dir-fin}  for $i=1,\dots,N-1$. But then Proposition \ref{prop:classif-1D-2s-not-1}-$(ii)$ implies that $b_i(x_N)=c_i (x_N)^{2s-1}_+$, for some constant $c_i$, so that \eqref{eq:growth-HBR} implies that $c_i=0$. It then follows that $a$ solves \eqref{eq:first-eq-Dir-fin}  and thus  Proposition \ref{prop:classif-1D-2s-not-1}-$(ii)$ yields the desired result.
\end{proof}
Next, we state the following result for which its proof is based on a blow up argument   as above and the classification result in Lemma \ref{lem:Liouville-Dirichlet}. Since the proof is very similar   to the one of Proposition \ref{prop:bound-Kato-abstract},  we omit it.
 \begin{proposition} \label{prop:bound-Kato-abstract-Dir}
Let  $s_0\in (1/2,1)$, $\e\in (0,2s_0-1)$,  $p>\frac{N}{2s_0 }$.  Put $\g:=\min(2s-N/p,2s-1-\e)$.   Then there exist $\e_0,C>0$ such that for every $s\in [s_0,1)$, $\psi\in C^{1}(\R^{N-1})$ and for every
$ g\in L^p(\R^N)$,  $ w\in \calH^s_0(\R^N_+) $  satisfying 
$$
 [  \psi]_{C^{1}(\R^{N-1})}<\e_0 
$$
and 
$$
\calD_{K^{\psi}_s}(w,\vp)=\int_{B_2^+}g(x)\vp(x)\, dx \qquad\textrm{ for all $\vp \in C^\infty_c(B_2^+) $, }
$$
we have 
\be \label{eq:bdr-estim-Kato}
\sup_{r>0} r^{-2\g-N}\sup_{z\in B_1'}  \| w\|_{L^2(B_r(z))}^2 \leq  C    (  \|w\|_{  L^2(\R^N) } +  \|g\|_{ L^p(\R^N) }  )^2.
\ee 
\end{proposition}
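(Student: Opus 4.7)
The plan is to mimic the blow-up and compactness scheme of Proposition~\ref{prop:bound-Kato-abstract}, replacing the half-space Neumann Liouville result of Theorem~\ref{th:Liouville} by the Dirichlet version in Lemma~\ref{lem:Liouville-Dirichlet}. Assume for contradiction that \eqref{eq:bdr-estim-Kato} fails, and for every $n\geq 2$ select $s_n\in[s_0,1)$, $\psi_n\in C^1(\R^{N-1})$ with $[\psi_n]_{C^1}<1/n$, $g_n\in L^p(\R^N)$ and $w_n\in\calH^{s_n}_0(\R^N_+)$ solving the flattened equation with $\|g_n\|_{L^p}+\|w_n\|_{L^2}\leq 1$ and
$$
\sup_{r>0,\,z\in B_1'}\, r^{-N-2\gamma_n}\|w_n\|_{L^2(B_r(z))}^2 > n, \qquad \gamma_n:=\min(2s_n-N/p,\,2s_n-1-\epsilon).
$$
Following the monotonicity device used in Proposition~\ref{prop:bound-Kato-abstract}, applied to the nonincreasing envelope $\Theta_n(\bar r):=\sup_{r\geq\bar r,\,z\in B_1'}r^{-N-2\gamma_n}\|w_n\|_{L^2(B_r(z))}^2$, extract $r_n\downarrow 0$ and $z_n\in B_1'$ such that $\Theta_n(r_n)\geq n/4$ and the supremum defining $\Theta_n(r_n)$ is essentially attained at $(r_n,z_n)$.

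Next introduce the rescaled objects
$$
W_n(x):=\Theta_n(r_n)^{-1/2}\,r_n^{-\gamma_n}\,w_n(r_nx+z_n),\qquad \tilde\psi_n(x'):=r_n^{-1}\psi_n(r_nx'+z_n'),\qquad K_n:=K^{\tilde\psi_n}_{s_n},
$$
together with $\hat g_n(x):=\Theta_n(r_n)^{-1/2}r_n^{2s_n-\gamma_n}g_n(r_nx+z_n)$. Because $z_n\in\partial\R^N_+$ and $w_n$ vanishes outside $\R^N_+$, so does $W_n$; in particular $W_n\in\calH^{s_n}_0(\R^N_+)$. A change of variables gives $\calD_{K_n}(W_n,\varphi)=\int\hat g_n\varphi$ on $B_M^+\subset B^+_{1/(2r_n)}$ for $n$ large. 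The choice of $r_n$ and the monotonicity of $\Theta_n$ yield
$$
\|W_n\|_{L^2(B_1)}^2\geq \tfrac12, \qquad \|W_n\|_{L^2(B_R)}^2\leq C\,R^{N+2\gamma_n}\ \text{ for every } R\geq 1,
$$
while $\|\hat g_n\|_{L^p}\leq\Theta_n(r_n)^{-1/2}r_n^{2s_n-\gamma_n-N/p}\leq\Theta_n(r_n)^{-1/2}\to0$, using $\gamma_n\leq 2s_n-N/p$. The Caccioppoli inequality of Lemma~\ref{lem:non-loc-caciop} combined with the fractional Sobolev inequality then produces $(1-s_n)[W_n]_{H^{s_n}(B^+_{M/2})}^2\leq C(M)$, exactly as in \eqref{eq:local-bnd-w_n}. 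A diagonal subsequence provides a limit $W\in H^{\bar s}_{loc}(\overline{\R^N_+})$, with $\bar s:=\lim s_n\in[s_0,1]$, strongly in $L^2_{loc}$ and in $H^\sigma_{loc}$ for every $\sigma<\bar s$, and with $W\equiv 0$ on $\R^N\setminus\R^N_+$.

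Passing to the limit in the bilinear form, using $[\tilde\psi_n]_{C^1}\leq 1/n\to0$ together with \eqref{eq:remainder-Kernel-flat0} to dispose of $K_n-K^+_{s_n}$, the estimate \eqref{eq:Ksn-Ksovs} for $s_n\to\bar s$, the tail control supplied by the growth bound on $W_n$, and Lemma~\ref{lem:nonloc-to-loc} in the borderline case $\bar s=1$, I obtain $(-\Delta)^{\bar s}_{\R^N_+}W=0$ in $\R^N_+$ with $W\equiv 0$ outside, or $-\Delta W=0$ in $\R^N_+$ with zero Dirichlet data on $\partial\R^N_+$ when $\bar s=1$. Fatou gives $\|W\|_{L^2(B_R)}\leq CR^{N/2+\gamma}$ with $\gamma=\min(2\bar s-N/p,\,2\bar s-1-\epsilon)<2\bar s-1$. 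If $\bar s<1$, Lemma~\ref{lem:Liouville-Dirichlet} forces $W(x)=a(x_N)_+^{2\bar s-1}$; since $\|(x_N)_+^{2\bar s-1}\|_{L^2(B_R)}\asymp R^{N/2+2\bar s-1}$ and $\gamma<2\bar s-1$, the growth bound yields $a=0$. If $\bar s=1$, the odd reflection of $W$ across $\partial\R^N_+$ is harmonic on $\R^N$ with growth of order $\gamma<1$, hence constant and, being odd, identically zero. Either way $W\equiv 0$, which contradicts $\|W\|_{L^2(B_1)}^2\geq 1/2$. As in Proposition~\ref{prop:bound-Kato-abstract}, the delicate point is the simultaneous $n\to\infty$ limit of the kernels, of the exponents $s_n$ and of the parameterizations $\psi_n$; the Liouville step itself is packaged in Lemma~\ref{lem:Liouville-Dirichlet}.
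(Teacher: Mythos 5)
Your proposal is correct and follows exactly the route the paper intends: the paper omits this proof, stating it is "very similar" to Proposition \ref{prop:bound-Kato-abstract}, with the blow-up/compactness scheme unchanged and the Neumann Liouville theorem replaced by the Dirichlet classification of Lemma \ref{lem:Liouville-Dirichlet}. Your handling of the two limit regimes ($\bar s<1$ via Lemma \ref{lem:Liouville-Dirichlet} plus the growth bound $\g<2\bar s-1$ killing the $(x_N)_+^{2\bar s-1}$ profile, and $\bar s=1$ via odd reflection and the classical Liouville theorem) matches the intended argument.
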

As a consequence we have the following result.
\begin{corollary}\label{cor:C-gam-reg-Dir}
Let  $s_0\in (1/2,1)$, $\e\in (0,2s_0-1)$,  $p>\frac{N}{2s_0 }$ and $s\in[s_0,1)$. We put $\g:=\min(2s-N/p,2s-1-\e)$. Let $\psi\in C^{1}(\R^{N-1})$ 
$ g\in L^p(\R^N)$,  $ w\in \calH^s_0(\R^N_+) $  satisfying 
and 
\be\label{eq:frac-half-flat}
\calD_{K^{\psi}_s}(w,\vp)=\int_{B_2^+}g(x)\vp(x)\, dx \qquad\textrm{ for all $\vp \in C^\infty_c(B_2^+) $. }
\ee
Then  there exists $\ov C,\e_0$ depending only on $N,s_0,p$ and $\e$  such that, provided $ [  \psi]_{C^{1}(\R^{N-1})}<\e_0$,  
$$
\|w\|_{C^\g({B_{1/2})}}\leq  C    (  \|w\|_{  L^2(B_2) }+ \|w\|_{L^1(\R^N; \frac{1}{1+|x|^{N+2s}})} +  \|g\|_{ L^p(\R^N) }  ).
$$
\end{corollary}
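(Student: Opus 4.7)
The plan is to deduce this corollary from Proposition \ref{prop:bound-Kato-abstract-Dir} in exactly the same way that Corollary \ref{eq:cor-reg} is derived from Proposition \ref{prop:bound-Kato-abstract}, combining a boundary Campanato estimate with interior regularity at each scale, and gluing by Lemma \ref{lem:Prop-of-RS}. Without loss of generality, I normalize so that $\|w\|_{L^2(B_2)} + \|w\|_{L^1(\R^N;(1+|x|^{N+2s})^{-1})} + \|g\|_{L^p(\R^N)} \leq 1$.

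First, I iterate \eqref{eq:bdr-estim-Kato} in the standard Campanato fashion to produce a bounded function $m\colon B_1' \to \R$ with $m \equiv 0$ (because $w$ vanishes on $\R^N\setminus\R^N_+$ so its $L^2$-average on $B_r(z)$ with $z\in B_1'$ tends to $0$) satisfying
\[
\|w\|_{L^2(B_r(z))} \leq C\, r^{N/2 + \gamma} \qquad\text{for all } z \in B_1',\ r>0.
\]
Next, for an interior point $\xi=(\xi',\xi_N)\in B_{1/2}^+$ I set $\rho = \xi_N/2$, and rescale $u_\xi(y) := w(\xi+\rho y)$, $\tilde g(y) := g(\xi+\rho y)$, so that $u_\xi$ satisfies a variational equation in $B_1$ with kernel $K_{\rho,s}(x,y) = \frac{c_{N,s}}{|\Psi_\rho(x)-\Psi_\rho(y)|^{N+2s}}$, where $\Psi_\rho(x)=\frac{1}{\rho}\Psi(\rho x+\xi)$, and right-hand side $\rho^{2s}\tilde g$. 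This kernel is translation-invariant up to a smooth $C^{0,1}$-perturbation and fits the class covered by the interior regularity result \cite[Theorem 4.1]{Fall-reg-1} (or \cite{Fall-reg-2}), giving
\[
\|u_\xi\|_{C^\gamma(B_{1/2})} \leq C\Bigl( \|u_\xi\|_{L^2(B_1)} + \int_{|y|\geq 1/2}\frac{|u_\xi(y)|}{1+|y|^{N+2s}}\,dy + \rho^{2s-N/p} \Bigr).
\]

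The two non-trivial terms are controlled by the boundary Campanato bound applied at $z=\xi'\in B_1'$: since $B_\rho(\xi)\subset B_{2\rho}(\xi')$, we have $\|u_\xi\|_{L^2(B_1)} = \rho^{-N/2}\|w\|_{L^2(B_\rho(\xi))} \leq C\rho^\gamma$; and a standard dyadic decomposition $\{|y|\geq 1/2\} = \bigcup_{k\geq 0}\{2^{k-1}\leq |y|/\rho\leq 2^k\}$ combined with the same $L^2$-growth on the balls $B_{2^{k+1}\rho}(\xi')$ (plus the global $L^1$-tail hypothesis on $w$ for the scales larger than $1$) yields
\[
\int_{|y|\geq 1/2}\frac{|u_\xi(y)|}{1+|y|^{N+2s}}\,dy \leq C\rho^{2s}\sum_{k\geq 0}(2^k\rho)^{-N-2s}(2^k\rho)^{N+\gamma} \leq C\rho^\gamma,
\]
since $\gamma<2s$. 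Rescaling back gives $|w(\xi)| + [w]_{C^\gamma(B_{\rho/2}(\xi))} \leq C$ uniformly in $\xi$. Finally, Lemma \ref{lem:Prop-of-RS} turns these pointwise-plus-local-seminorm bounds into a global estimate $\|w\|_{C^\gamma(B_{1/2}^+)} \leq C$; the $C^\gamma$ bound on all of $B_{1/2}$ then follows because $w\equiv 0$ outside $\R^N_+$ and vanishes on $B_{1/2}'$ with the correct rate.

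The main obstacle is a bookkeeping one: checking that the dyadic tail sum converges (which requires $\gamma<2s$, a consequence of $\gamma\leq 2s-1-\epsilon<2s$) and that the interior estimate applies uniformly in the parameter $s\in[s_0,1)$ with constants depending only on $s_0$. No new Liouville-type input is needed, since the heavy lifting was already done in Proposition \ref{prop:bound-Kato-abstract-Dir}.
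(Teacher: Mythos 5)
Your proposal is correct and follows essentially the same route as the paper: apply Proposition \ref{prop:bound-Kato-abstract-Dir} to get the boundary growth $\|w\|_{L^2(B_r(z))}\leq Cr^{N/2+\gamma}$ (which the proposition already yields directly, so no Campanato iteration or correction function $m$ is actually needed), then rescale at each interior point $\xi$ with $\rho=\xi_N/2$, invoke the interior estimate of \cite[Theorem 4.1]{Fall-reg-1}, control the $L^2$ and dyadic tail terms by the boundary growth, and glue with Lemma \ref{lem:Prop-of-RS}. The only cosmetic difference is that the paper first multiplies $w$ by a cutoff $\chi\in C^\infty_c(B_4)$ so that the far tail is absorbed into the right-hand side, whereas you handle it directly through the $L^1(\R^N;(1+|x|^{N+2s})^{-1})$ hypothesis; both are fine.
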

\begin{proof}
Let $\chi\in C^\infty_c(B_4)$ with $\chi\equiv 1$ on $B_{1/2}$. Then we have that $u=\chi_{B_{4}}w$ solves 
$$
\calD_{K^{\psi}_s}(u,\vp)=\int_{B_2^+}f(x)\vp(x)\, dx\qquad\textrm{ for all  $\vp\in C^1_c(B_{1/2})$,} 
$$
for some function $f\in L^p(\R^N)$ satisfying $\|f\|_{L^p(\R^N)}\leq C\left(  \|w\|_{L^1(\R^N; \frac{1}{1+|x|^{N+2s}})}  +  \|g\|_{ L^p(\R^N) } \right) $.
Let  $x_0=(x_0',x_0\cdot e_N)\in  B_1^+$  and define $\rho=\frac{x_0\cdot e_N}{2}$, so that $B_\rho(x_0)\subset B_{3\rho}(x_0')\cap  \R^N_+. $
Next, we define $v_\rho(x):=u(\rho x+ x_0)$  and $f_\rho(x):= \rho^{2s} f (\rho x+ x_0)$. It is plain that $v\in H^s_{loc}(B_2)\cap L^1(\R^N; \frac{1}{1+|x|^{N+2s}})$ and
$$
\calD_{K_{\rho,s}}(v_\rho ,\vp)=\int_{B_2^+}f_\rho(x)\vp(x)\, dx\qquad\textrm{ for all  $\vp\in C^1_c(B_{1})$,} 
$$
where $K_{\rho,s}(x,y)=1_{\{x_N>-2\}}(x)1_{\{y_N>-2\}}(y)\frac{c_{N,s}}{|\Psi_\rho(x)-\Psi_\rho(y)|^{N+2s}}$ and   $\Psi_\rho( x)=\frac{1}{\rho}\Psi(\rho x+x_0)$.
Note that $  \|  f_\rho \|_{L^p(\R^N)}\leq \rho^{2s-N/p} \|f\|_{L^p(\R^N)} $.   By the interior regularity, see e.g.  \cite[Theorem 4.1]{Fall-reg-1}.
\be \label{eq:estime-v-scale}
\|v_\rho\|_{C^{\min(1-\e,2s-N/p)}(B_{1/2})}\leq C\left(   \|v_\rho\|_{L^2(B_1)}+ \|v_\rho\|_{L^1(\R^N; \frac{1}{1+|x|^{N+2s}})} +  \|f_\rho\|_{L^p(\R^N)}  \right)  .
\ee
By  \eqref{eq:bdr-estim-Kato}  and H\"older's inequality,  we have
\begin{align}\label{eq:norm-vL2-cL1s-rho}
\|v_\rho\|_{L^2(B_1)}\leq \rho^{-N/2} \|u\|_{L^2(B_{3\rho}(x_0'))}\leq C \rho^\g \quad \textrm{and } \quad  \|u\|_{L^1(B_{r}(x_0'))}\leq C r^{N+\g}\quad
  \textrm{for  $r>0$.}
\end{align}
Using   the second estimate in \eqref{eq:norm-vL2-cL1s-rho}, we get
\begin{align*}
&\int_{|x|\geq 1}|x|^{-N-2s}|v_\rho(x)|\, dx=\rho^{2s}\int_{|y-x_0|\geq \rho }|y-x_0|^{-N-2s}|u(y)|\, dy\\
&\leq \rho^{2s}\sum_{k=0}^\infty\int_{\rho 2^{k+1}\geq |y-x_0|\geq \rho2^k}|y-x_0|^{-N-2s}|u(y)|\, dy\\
&\leq \rho^{2s}\sum_{k=0}^\infty (2^k\rho)^{-N-2s}  \int_{\rho2^{k+1} \geq |y-x_0| } |u(y)|\, dy \leq  \rho^{2s}\sum_{k=0}^\infty (2^k\rho)^{-N-2s}\|u\|_{L^1(B_{\rho 2^{k+3} }(x_0'))}  \\
&\leq  C \rho^{2s}\sum_{k=0}^\infty(2^k\rho)^{-N-2s} (\rho 2^{k})^{N+\g}  \leq C \rho^{\g} \sum_{k=0}^\infty 2^{-k(2s-\g)}\leq C \rho^{\g}.
\end{align*}
We then conclude that $\|v_\rho\|_{L^1(\R^N; \frac{1}{1+|x|^{N+2s}})} \leq C \rho^{\g}$. It follows from   \eqref{eq:norm-vL2-cL1s-rho} and  \eqref{eq:estime-v-scale}, that
 $$
  \|v_\rho\|_{C^\g(B_{1/2})}\leq  C (\rho^{\g}+  \rho^{2s-N/p} ).
  $$
Scaling back and using Lemma \ref{lem:Prop-of-RS}, we get
 $$
|u(x_0)|+  \|u\|_{C^\g(B_{\rho/2}(x_0))}\leq C ,
  $$
as  claimed.
\end{proof}
The next result provided the first steps toward the higher order boundary regularity for the regional fractional Dirichlet problem. We first  define for $w\in L^2_{loc}(\R^N)$   and $r>0$, 
\be\label{eq:defQ-w-z-r}
Q_{w,r} (x) = \frac{(x_N)_+^{2s-1}}{ \int_{ B_r}(y_N)_+^{2(2s-1)}\, dy     }{\int_{B_r} w(y) (y_N)_+^{2s-1}\, dy  } =:(x_N)^{2s-1}_+q_{w}(r).
\ee
We have the following result which is crucial to deduce higher order regularity of $v/\d^{2s-1}$ up to  the boundary.
 \begin{proposition} \label{prop:bound-Kato-abstract-HB}
Let  $s\in (1/2,1),$   $p>{N}$ and $\b\in (0,1)$.    Then there exist $\e_0,C>0$ such that for every  $\psi\in C^{1,\b}(\R^{N-1})$ and for every
$ g\in L^p(\R^N)$,  $ v\in \calH^s_0(\R^N_+ )$  satisfying 
$$
\calD_{K^{\psi}_s}(v,\vp)=\int_{B_2^+}g(x)\vp(x)\, dx \qquad\textrm{ for all $\vp \in C^\infty_c(B_2^+) $,}
$$
$$
\| g \|_{L^p(\R^N) } + \|v \|_{ L^2(\R^N)  }\leq 1
$$
and 
$$
\| \n  \psi\|_{C^{0,\b}(\R^{N-1})}<\frac{1}{4}, \qquad \psi(0)=|\psi(0)|=0,
$$
then we have 
\be\label{eq:EEEt} 
\sup_{r>0} r^{-\frac{N}{2}- \g}  \| v - Q_{ v,r}  \|_{L^2(B_r)} \leq  C  ,
\ee
where $\g:=\min ( 2s-N/p, \b+2s-1)-\e>2s-1$.
%

%
%
%
%
\end{proposition}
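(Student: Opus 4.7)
The plan is to follow very closely the contradiction/compactness scheme already deployed for the Neumann case in Proposition \ref{prop:bound-Kato-abstract-HB1}, replacing the projection onto affine tangential functions by the projection onto $(x_N)_+^{2s-1}$ (which is the model Dirichlet-harmonic profile by Lemma \ref{lem:Liouville-Dirichlet}). Suppose the conclusion fails. Then, for each integer $n\geq 2$, there exist data $(\psi_n,g_n,v_n)$ satisfying the hypotheses for which the monotone quantity
\[
\Theta_n(\ov r):=\sup_{r\in[\ov r,\infty)} r^{-\frac{N}{2}-\g}\|v_n-Q_{v_n,r}\|_{L^2(B_r)}
\]
blows up, and by the usual selection lemma one finds $r_n\to 0$ with $\Theta_n(r_n)\geq n/4$ and near-maximality at scale $r_n$. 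Normalize by setting
\[
w_n(x):=\frac{r_n^{-\g}}{\Theta_n(r_n)}\Bigl\{v_n(r_n x)-Q_{v_n,r_n}(r_n x)\Bigr\},
\qquad \ti\psi_n(x'):=r_n^{-1}\psi_n(r_n x'),
\]
so that $\|w_n\|_{L^2(B_1)}\geq 1/2$, $w_n$ is orthogonal to $(x_N)_+^{2s-1}$ in $L^2(B_1)$, and $\|w_n\|_{L^2(B_R)}\leq CR^{N/2+\g}$ for $R\geq 1$.

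Next I would derive the rescaled equation on $B_{1/(2r_n)}^+$. Since $(-\Delta)^s_{\R^N_+}(x_N)_+^{2s-1}=0$ on $\R^N_+$, exactly as in Proposition \ref{prop:bound-Kato-abstract-HB1} the ``error'' produced by the profile $(x_N)_+^{2s-1}$ only sees the deviation
$K'_n(x,y):=r_n^{-\b}(K^{\ti\psi_n}_s(x,y)-K^+_s(x,y))$,
which by \eqref{eq:remainder-Kernel-flat} and the smallness of $\|\n\psi_n\|_{C^{0,\b}}$ is bounded pointwise by $C(|x|^\b+|y|^\b)|x-y|^{-N-2s}$. Writing $\wtilde Q_n(x):=r_n^{\b-\g}\Theta_n(r_n)^{-1}Q_{v_n,r_n}(r_n x)=r_n^{2s-1+\b-\g}\Theta_n(r_n)^{-1}q_{v_n}(r_n)(x_N)_+^{2s-1}$, the crucial point is to bound the prefactor. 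Corollary \ref{cor:C-gam-reg-Dir} gives the $C^{2s-1-\epsilon'}$ bound of $v_n$ up to the boundary, hence $|q_{v_n}(r_n)|\leq C r_n^{-\epsilon'}$ (for any small $\epsilon'>0$), which together with $\b+2s-1-\g\geq\epsilon$ keeps the coefficient of $(x_N)_+^{2s-1}$ in $\wtilde Q_n$ bounded. The localized Caccioppoli estimate (Lemma \ref{lem:non-loc-caciop}) combined with the fractional Sobolev inequality, as in \eqref{eq:Bond-Sobl-Hi}, then yields a uniform $H^s_{loc}(\ov{\R^N_+})$ bound on $w_n$.

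The compactness step then extracts a limit $w\in H^s_{loc}(\ov{\R^N_+})\cap L^1(\R^N;(1+|x|^{N+2s})^{-1})$, which vanishes outside $\R^N_+$ (since $w_n$ does and $\g<2s$), satisfies the growth bound $\|w\|_{L^2(B_R)}\leq CR^{N/2+\g}$, and is orthogonal to $(x_N)_+^{2s-1}$ on $B_1$. Passing to the limit in the rescaled equation: the right-hand side vanishes because $\|\hat g_n\|_{L^p}\leq \Theta_n(r_n)^{-1}r_n^{2s-\g-N/p}\to 0$ (using $2s-N/p\geq\g$); the contribution of $\calD_{K'_n}(\wtilde Q_n,\vp)$ vanishes because $K'_n$ is dominated by an integrable tail times $r_n^\b$ and the prefactor in $\wtilde Q_n$ tends to zero thanks to the extra factor $\Theta_n(r_n)^{-1}\to 0$; and the bulk kernel $K^{\ti\psi_n}_s$ converges to $K^+_s$ by \eqref{eq:remainder-Kernel-flat0}. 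Hence $(-\Delta)^s_{\R^N_+}w=0$ in $\R^N_+$ with $w=0$ outside.

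By Lemma \ref{lem:Liouville-Dirichlet} and $\g<2s$, we conclude $w(x)=a(x_N)_+^{2s-1}$ for some $a\in\R$. The orthogonality $\int_{B_1}w(x)(x_N)_+^{2s-1}\,dx=0$ forces $a=0$, contradicting $\|w\|_{L^2(B_1)}\geq 1/2$. The main obstacle I anticipate is the bookkeeping in the passage to the limit of the ``$K'_n$ term'' involving $\wtilde Q_n$: the profile $(x_N)_+^{2s-1}$ is not compactly supported and decays only slowly, so one must split the double integral into bounded/unbounded regions, use the decay $|K'_n(x,y)|\leq C|x-y|^{-N-2s}(|x|^\b+|y|^\b)$, and exploit the smallness of $\Theta_n(r_n)^{-1}$ to close the estimate; the rest is a direct transcription of the Neumann argument.
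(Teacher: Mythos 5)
Your proposal is correct and follows essentially the same route as the paper's own proof: the same contradiction/blow-up setup with the monotone quantity $\Theta_n$, the same normalization and orthogonality to $(x_N)_+^{2s-1}$, the same use of $(-\Delta)^s_{\R^N_+}(x_N)_+^{2s-1}=0$ together with the kernel deviation $K'_n$ bounded via \eqref{eq:remainder-Kernel-flat}, the bound $|q_{v_n}(r_n)|\leq C_{\epsilon'} r_n^{-\epsilon'}$ from Corollary \ref{cor:C-gam-reg-Dir}, the Caccioppoli--Sobolev compactness, and the conclusion via Lemma \ref{lem:Liouville-Dirichlet}. The "obstacle" you flag (the slow decay of the profile in the $K'_n$ term) is handled in the paper exactly as you suggest, via the $H^s(B_R)$ and weighted $L^1$ bounds on $Q_n$ carrying the factor $\Theta_n(r_n)^{-1}$.
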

 \begin{proof}
 %
Assume that  the assertion   does not hold, then   for every   $n\in \N$, $\psi_n\in C^{1,\b}(\R^{N-1})$, $ g_n\in L^p(\R^N)$,  $ v_n\in \calH^s_0(\R^N_+) $, with 
$$
\| g_n \|_{L^p(\R^N) } + \|v_n \|_{ L^2(\R^N)  }\leq 1
$$
satisfying 
\be\label{eq:frac-half-flat-n}
\calD_{K^{\psi_n}_s}(v_n,\vp)=\int_{B_2^+}g_n(x)\vp(x)\, dx \qquad\textrm{ for all $\vp \in C^\infty_c(B_2^+) $, }
\ee
while  
$$
\sup_{r>0} r^{-\frac{N}{2}-\g}  \|  v_n -Q_{  v_n,r}  \|_{L^2(B_r)}> n
$$
and 
\be \label{eq:psi-n-small}
\| \n  \psi_n\|_{C^{0,\b}(\R^{N-1})}<\frac{1}{4}, \qquad \psi_n(0)=|\psi_n(0)|=0,
\ee
Consequently,  there exists $\ov r_n>0$   such that 
$$
\ov r_n^{-\frac{N}{2}-\g}  \|v_n- Q_{v_n ,\ov r_n}  \|_{L^2(B_{\ov r_n})} > n/2  .
$$
We consider the (well defined, because $\| \ov v_n\|_{ L^2(\R^N)} \leq 1$) nonincreasing function $\Theta_n: (0,\infty)\to [0,\infty)$ given by
$$
\Theta_n(\ov r)=\sup_{ r\in [ \ov r,\infty)}   r^{-\frac{N}{2}-\g}  \|  v_n -Q_{  v_n,r}  \|_{L^2(B_r)}  .
$$
Proceeding as in the proof of Proposition \ref{prop:bound-Kato-abstract}, we can find a sequence $r_n$ tending to zero such that 
  for $n\geq 2$,  
\be\label{eq:The-n-geq-n-H}
  \Theta_n( r_n)> n/4 .
\ee
Hence, provided $n\geq 2$,  there exists $r_n\in [\ov r_n,\infty)$ such that 
\begin{align*}
 r_n^{-\frac{N}{2}-\g}  \| v_n- Q_{  v_n, r_n}  \|_{L^2(B_{ r_n})} \geq \frac{1}{2}\Theta_n( r_n)\geq\frac{n}{8}.
\end{align*}
 We now   define the   sequence  of functions
$$
 {w}_n(x)=   \frac{r_n^{-\frac{N}{2}-\g} }{  \Theta_n(r_n) }  \left\{  v_n(r_n x)-Q_{ v_n ,r_n} (r_n x)   \right\}.
$$
It satisfies 
\be \label{eq:w-n-nonzero-H}
 \|w\|_{L^2(B_{1})} \geq \frac{1}{2} ,   \qquad\textrm{and} \qquad \int_{B_1}w_n(x) (x_N)_+^{2s-1}\,dx =0 \qquad\textrm{ for every $n\geq2$.}
\ee
%
 %
Using similar arguments as in \cite{Fall-reg-1,RS16a},   we find that 
\be\label{eq:groht-w-n-abs-HB-Dir}
 \|w_n\|_{L^2(B_{R})} \leq  C   R^{\frac{N}{2}+\g}  \qquad\textrm{ for every $R\geq 1$ and $n \geq 2$,}
\ee
for some constant $C=C(N,\g)>0$.\\
We define
$$
  K_n(x,y)=\frac{r_n^{N+2s}}{2} K^{\psi_n}_{s}(r_nx ,r_ny )=\frac{1}{2} K^{\ti\psi_n}_{s}(x,y),
$$
where $\ti\psi_n(x):=\frac{1}{r_n}\psi_n(r_n x) $.
Let $\hat g_n(x)= \frac{r_n^{2s-\g}}{\Theta_n(r_n) }  g_n (r_n x)$ and 
$$
Q_n(x)= \frac{r_n^{\b-\g}  }{\Theta_n(r_n) }  Q_{ v_n ,r_n} (r_n x)=  \frac{r_n^{\b+2s-1-\g} q_{ {v_n}}( r_n)}{\Theta_n(r_n) } (x_N)_+^{2s-1}  .
$$ 
By a change of variable, we get
\be\label{eq:estim-ov-g-n-Dir}
\|\hat g_n\|_{L^p(\R^N)}\leq   \frac{1}{\Theta_n(r_n) } .
\ee
Moreover,
for all $\vp \in C^\infty_c(B_\frac{1}{2r_n}^+(0))$ , 
\begin{align*}
\calD_{K_n}(w_n,\vp)+ r_n^{-\b}\calD_{K_n}(Q_n,\vp)=\int_{\R^N_+}\ov g_n(x) \vp(x)\, dx.
%
%
\end{align*}
We define $ K'_n(x,y):= r_n^{-\b}\left(K_{n}(x,y)- K_n^+(x,y) \right)$, so that
\be \label{eq:K-prime-small-Dir}
|K'_n(x,y)|=r_n^{-\b}\left | K_{n}(x,y)- K_n^+(x,y) \right|\leq c_{N,s}  C(N)   \frac{  r_n^{-\b} \min(| r_n x'|^\b+| r_ny'|^\b, 1)}{|x-y|^{N+2s}}.
\ee
Using that $\Ds_{{\R^N_+}}x^{2s-1}_N=0$ on $\R^N_+$, we thus obtain
\begin{align} \label{eq:frac-half-flat-n-scal-H-Dir}
\calD_{K_n}(w_n,\vp)+ \calD_{K_n'}(Q_n,\vp)=\int_{\R^N_+}\ov g_n(x) \vp(x)\, dx.
%
%
\end{align}
By Corollary \ref{cor:C-gam-reg-Dir},  for every $\e>0$, there exists $C_\e>0$ such that 
\be \label{eq:esti-d-epsil}
|q_{ { v_n}}( r_n)|\leq C_\e r_n^{-\e}.
\ee
From this, we get 
\be\label{eq:Qn1}
\|Q_n\|_{H^s(B_R)} \leq \frac{C(R)}{\Theta_n(r_n)   } \qquad \textrm{for all $R>0$ }
\ee
and, since $\b<1$, 
\be\label{eq:Qn2}
\|Q_n\|_{L^1(\R^N_+;(1+|x|)^{-N-2s+\b})}\leq  \frac{C}{\Theta_n(r_n)  }  
\ee
 Moreover, as in the proof of Proposition \ref{prop:bound-Kato-abstract-HB1} based on Lemma \ref{lem:non-loc-caciop}, we can show that, for fixed  $M>1$ and $n\geq 2$ is large so that $1<M<\frac{1}{2r_n}$,  
\begin{align*}
&(1-s)[w_n]_{ H^{s}\left( B_{M}\right) }^2\leq   C(M).
\end{align*}
Consequently, up to a subsequence, there exists $w\in H^{ s}_{loc}(\R^N) $ such that, for all $\s<s$,
\be \label{eq:pm1}
w_n\to w\qquad \textrm{  in $H^\s_{loc}(\R^N)$}
\ee
and, by \eqref{eq:groht-w-n-abs-HB-Dir}, 
\be   \label{eq:pm2}
 \int_{\R^N}\frac{| w_n(x)-w(x)|}{1+|x|^{N+2s}}\, dx  \to 0    \qquad \textrm{ as $n\to \infty$.}
\ee
%
Passing to the limit in \eqref{eq:w-n-nonzero-H}, we obtain
\be \label{eq:w-n-nonzero-HH}
 \|w\|_{L^2(B_{1})} \geq \frac{1}{2}  \qquad\textrm{and} \qquad \int_{B_1}w(x) (x_N)_+^{2 s-1}\,dx =0 .
\ee
Next by \eqref{eq:frac-half-flat-n-scal-H-Dir}, \eqref{eq:Qn1},  \eqref{eq:Qn2}  and \eqref{eq:estim-ov-g-n-Dir}, we have  for all  $\phi\in C^\infty_c(B_M^+)$, with $M$  as above, 
\begin{align} 
&\frac{1}{2} \left|\int_{\R^{2N}}(w_n(x)-w_n(y))(\phi(x)-\phi(y)) K_n(x,y)\, dxdy \right|  \leq \frac{C}{  \Theta_n(r_n) } \|\phi\|_{C^{0,1}(\R^N)}. \label{eq:close-LiouvillH}
\end{align}
Now  in view of \eqref{eq:K-prime-small-Dir}, \eqref{eq:pm1}, \eqref{eq:pm2} and   \eqref{eq:close-LiouvillH},  we can  let $n\to \infty$ in \eqref{eq:close-LiouvillH}, to obtain $\Ds_{\R^{N}_+}w=0$ on $\R^N_+$ and $w=0$ on $\R^N\setminus \R^N_+$. Passing to the limit in \eqref{eq:groht-w-n-abs-HB-Dir} yields  $\|w\|_{L^2(B_R)}\leq C R^{N/2+\g}$ for all $R\geq1$. By Lemma  \ref{lem:Liouville-Dirichlet}, and since $\g<2s$, we find that $w(x)=b(x_N)^{2s-1}_+$, for some $b\in \R$. This is in contradiction with  \eqref{eq:w-n-nonzero-HH}.  

\end{proof}

\begin{corollary}\label{eq:cor-reg-BR1}
Let  $s\in (1/2,1),$   $2s-N/p>2s-1$ and   $\b\in (0,1)$.     Let $\psi\in C^{1,\b}(\R^{N-1})$ be such that 
$\| \n \psi\|_{C^{0,\b}(\R^{N-1})}<\frac{1}{4}$ and $ \psi(0)=|\psi(0)|=0$.
Let 
$ g\in L^p(\R^N)$,  $ v\in \calH^s_0(\R^N_+) $  satisfying 
$$
\calD_{K^{\psi}_s}(v,\vp)=\int_{B_2^+}g(x)\vp(x)\, dx \qquad\textrm{ for all $\vp \in C^\infty_c(B_2^+) $,}
$$
with
$$
\| g \|_{L^p(\R^N) } + \|v \|_{ L^2(\R^N)  }\leq 1
$$
Then there exist positive constant $C_0$ depending only on $N,s,p$ and $\b$ and a constant  $Q\in \R$ satisfying  
\be\label{eq:bnd-Q} 
| Q| \leq  C_0  
\ee
and 
$$
\sup_{r>0} r^{-\min (2s-N/p,\b+2s-1)-N/2} \| v-Q(x_N)^{2s-1}_+\|_{L^2(B_r)}\leq C_0  . 
$$
\end{corollary}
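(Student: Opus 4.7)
The approach is a Campanato-type iteration built on Proposition \ref{prop:bound-Kato-abstract-HB}. That proposition provides, for any small $\e>0$, the decay
$$
\|v-q_v(r)(x_N)_+^{2s-1}\|_{L^2(B_r)}\le C\,r^{\frac{N}{2}+\g},\qquad r>0,
$$
where $\g:=\min(2s-N/p,\b+2s-1)-\e>2s-1$ and $q_v(r)$ is the scalar coefficient appearing in the $L^2(B_r)$-projection $Q_{v,r}$ from \eqref{eq:defQ-w-z-r}. The constant depends only on $N,s,p,\b,\e$ thanks to the normalization $\|g\|_{L^p}+\|v\|_{L^2}\le1$.

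The first step is to show that $r\mapsto q_v(r)$ is Cauchy as $r\to 0$. Applying the above estimate at both $r$ and $r/2$ and using $B_{r/2}\subset B_r$, the triangle inequality yields
$$
\|(q_v(r)-q_v(r/2))(x_N)_+^{2s-1}\|_{L^2(B_{r/2})}\le C\,r^{\frac{N}{2}+\g}.
$$
A direct scaling computation gives $\|(x_N)_+^{2s-1}\|_{L^2(B_{r/2})}^2=c_N\,r^{N+2(2s-1)}$, whence
$$
|q_v(r)-q_v(r/2)|\le C\,r^{\g-(2s-1)}.
$$
Because $\g-(2s-1)>0$ by the choice of $\e$, the sequence $\{q_v(2^{-k})\}_k$ is Cauchy; let $Q$ be its limit. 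Summing the dyadic estimates geometrically produces $|q_v(r)-Q|\le C\,r^{\g-(2s-1)}$ for all small $r>0$. Combined with the uniform bound $|q_v(1)|\le C\|v\|_{L^2(B_1)}\le C$ obtained by Cauchy--Schwarz in \eqref{eq:defQ-w-z-r}, this forces $|Q|\le C_0$.

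Finally, the triangle inequality
$$
\|v-Q(x_N)_+^{2s-1}\|_{L^2(B_r)}\le\|v-q_v(r)(x_N)_+^{2s-1}\|_{L^2(B_r)}+|q_v(r)-Q|\,\|(x_N)_+^{2s-1}\|_{L^2(B_r)}
$$
delivers the decay $C\,r^{\frac{N}{2}+\g}$ for every admissible $\g$, which is the stated bound (the arbitrarily small loss $\e>0$ being absorbed in $C_0$; one also compares with $\|v\|_{L^2}\le 1$ for $r\ge 1$ to get the supremum over all $r>0$). The only substantive step is verifying the Cauchy property of $r\mapsto q_v(r)$; all the remaining ingredients reduce to routine application of Proposition \ref{prop:bound-Kato-abstract-HB} together with the elementary scaling of the model profile $(x_N)_+^{2s-1}$, so no additional analytic input is needed.
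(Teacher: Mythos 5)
Your dyadic telescoping argument for the first part is correct and is exactly what the paper leaves implicit by citing \cite{Fall-reg-1,RS16a}: the estimate $|q_v(r)-q_v(r/2)|\le Cr^{\g-(2s-1)}$ with $\g-(2s-1)>0$ does yield the limit $Q$, the bound $|Q|\le C_0$, and the decay $\|v-Q(x_N)_+^{2s-1}\|_{L^2(B_r)}\le C_\e\, r^{\frac{N}{2}+\g}$ with $\g=\min(2s-N/p,\b+2s-1)-\e$.

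The gap is in your final sentence. The corollary asserts the decay with the exponent $\min(2s-N/p,\b+2s-1)$ \emph{without} any $\e$-loss, and with a constant depending only on $N,s,p,\b$. You cannot ``absorb the arbitrarily small loss $\e>0$ in $C_0$'': the constant produced by Proposition \ref{prop:bound-Kato-abstract-HB} comes from a compactness/contradiction argument and depends on $\e$ through the choice of $\g$, with no control as $\e\to0$; for a fixed small $r$ the bound $C_\e r^{-\e}$ does not converge to anything useful unless $C_\e$ is uniform. The paper closes this gap with a second pass: once your first step gives $|q_v(r)-Q|\le Cr^{\g-(2s-1)}$ and $|Q|\le C_0$, one has the \emph{uniform} bound $|q_v(r)|\le C$ for all $r$, which is precisely the input that the $\e$ in Proposition \ref{prop:bound-Kato-abstract-HB} was designed to substitute for (via the estimate $|q_{v_n}(r_n)|\le C_\e r_n^{-\e}$ in \eqref{eq:esti-d-epsil}, needed for \eqref{eq:Qn1}--\eqref{eq:Qn2}). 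One then reruns the blow-up argument of that proposition verbatim with the exponent $\min(2s-N/p,\b+2s-1)$ in place of $\g$, and the contradiction with the Liouville theorem still applies since this exponent is strictly below $2s$. Your proof needs this bootstrap step to reach the stated conclusion.
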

\begin{proof}
Since $\min (2s-N/p,\b+2s-1)-\e>2s-1$, in view of \eqref{eq:EEEt},   we can use  similar arguments as in \cite{Fall-reg-1,RS16a},  to obtain  a constant  $Q\in \R$ satisfying  
$$
| Q|+  \sup_{r>0} r^{-\min (2s-N/p,\b+2s-1)-\e-N/2}\| v-Q(x_N)^{2s-1}_+\|_{L^2(B_r)}\leq C_0  . 
$$
This implies, recalling \eqref{eq:defQ-w-z-r}, that $q_{v}(r)\leq C$. We can thus  repeat  the proof of  Proposition \ref{prop:bound-Kato-abstract-HB} using $\min(2s-N/p,\b+2s-1) $ in the place of $ \g=\min (2s-N/p,\b+2s-1)-\e$ to get the desired estimate. We recall that the correction $\e$ was considered in the proof of Proposition \ref{prop:bound-Kato-abstract-HB}  in order to have a control on  the estimate \eqref{eq:esti-d-epsil}.
\end{proof}
\begin{corollary}\label{cor:u-odr-ds}
Under the hypothesis of Corollary \ref{eq:cor-reg-BR1}, there exists a constant $C$ depending only on $N,s,p$ and $\b $ with the following properties. %
\begin{enumerate}
\item[(i)] For all ${\x\in B_{1}^+\cap \R  e_N} $, 
$$
 |v(\x)/\x_N^{2s-1}|+   [ v]_{C^{2s-1}({B_{\x_N/2}(\x)})} + [ v/x_N^{2s-1}]_{C^{\min(1 -N/p,\b)}({B_{\x_N/2}(\x)})}\leq  C .
$$
\item[(ii)] If $2s-\frac{N}{p}>1$ and   $  \b>2s-1$ then  for all ${\x\in B_{1/2}^+\cap \R  e_N} $, 
\be \label{eq:ddd1}
  | \x_N^{2-2s} \n v(\x)| +      [x_N^{2-2s} \n v]_{C^{2s-\frac{N}{p}-1}({B_{\x_N/2}(\x)})}\leq  C .
\ee
Moreover $[x_N^{2-2s}\n v](0,\cdot)$ and    $[v/x_N^{2s-1}](0,\cdot)$ are continuous at $x_N=0$ and satisfy
\be  \label{eq:ddd2}
[x_N^{2-2s}\n v](0)= (2s-1) [v/x_N^{2s-1}](0) e_N.
\ee
\end{enumerate}  

\end{corollary}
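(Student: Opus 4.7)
My plan is to run a boundary blow-up along the axis $\mathbb{R} e_N$, using the $L^2$-decay of $v - Q x_N^{2s-1}$ provided by Corollary \ref{eq:cor-reg-BR1}. Let $Q$ be the constant from that corollary, set $\gamma := \min(2s - N/p, \beta + 2s - 1) > 2s - 1$, and for $\xi = (0, 2\rho) \in B_1^+ \cap \mathbb{R} e_N$ with $\rho \in (0, 1/2)$ introduce
\begin{equation*}
u_\xi(y) \,:=\, \rho^{-\gamma}\bigl(v(\xi + \rho y) - Q\,(\xi_N + \rho y_N)^{2s-1}\bigr).
\end{equation*}
Since $B_\rho(\xi) \subset \mathbb{R}^N_+$ lies at distance $\rho$ from the boundary and is contained in $B_{3\rho}(\xi')$, Corollary \ref{eq:cor-reg-BR1} gives $\|u_\xi\|_{L^2(B_1)} \leq C$. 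A change of variables, combined with the identity $(-\Delta)^s_{\mathbb{R}^N_+}(x_N^{2s-1}) \equiv 0$ on $\mathbb{R}^N_+$ and the pointwise estimate \eqref{eq:remainder-Kernel-flat0} for $|K^\psi_s - K^+_s|$, shows that $u_\xi$ satisfies an interior equation of the form $\mathcal{D}_{K^{\tilde\psi}_{\rho,s}}(u_\xi, \varphi) = \int \tilde g_\xi \,\varphi$ on $B_1$ with $\tilde\psi(y') = \rho^{-1}\psi(\rho y')$ and with source term $\tilde g_\xi$ of uniformly bounded $L^p$- (and in fact $L^\infty$-) norm; the normalizations $\psi(0) = |\nabla\psi(0)| = 0$ are precisely what keeps the tilted-kernel correction uniform in $\rho$.

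Interior estimates then finish the job. For part (i), I apply the interior Hölder regularity of \cite[Theorem 4.1]{Fall-reg-1} to get $\|u_\xi\|_{C^\gamma(B_{1/2})} \leq C$; for part (ii), where $\gamma > 1$ under the hypotheses, the interior $C^{1,\alpha}$ estimate of \cite[Theorem 1.3]{Fall-reg-2} yields $\|\nabla u_\xi\|_{C^{\gamma - 1}(B_{1/2})} \leq C$. Rescaling back produces
\begin{equation*}
\|v - Q x_N^{2s-1}\|_{L^\infty(B_{\rho/2}(\xi))} \leq C\rho^{\gamma}, \qquad [v - Q x_N^{2s-1}]_{C^{\gamma}(B_{\rho/2}(\xi))} \leq C,
\end{equation*}
and, in case (ii), analogous estimates for $\nabla v - Q(2s-1) x_N^{2s-2} e_N$. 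On $B_{\rho/2}(\xi) \subset \{x_N \geq c\rho\}$ the weights $x_N^{2s-1}$, $x_N^{2-2s}$ and their reciprocals are smooth with Hölder norms of explicit power of $\rho$, so a direct Leibniz computation shows that the $\rho$-powers cancel and yields uniform-in-$\xi$ estimates for $v$, $v/x_N^{2s-1}$, $\nabla v$ and $x_N^{2-2s}\nabla v$ with the exponents announced in (i) and (ii).

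For the boundary behaviour in (ii), evaluating on the axis the estimates above give
\begin{equation*}
\frac{v(\xi)}{\xi_N^{2s-1}} = Q + O(\rho^{\gamma - (2s-1)}), \qquad \xi_N^{2-2s}\nabla v(\xi) = (2s-1) Q\, e_N + O(\rho^{\gamma - (2s-1)}),
\end{equation*}
and since $\gamma > 2s - 1$, letting $\rho \to 0^+$ yields the continuity of $x_N \mapsto [v/x_N^{2s-1}](0,x_N)$ and $x_N \mapsto [x_N^{2-2s}\nabla v](0,x_N)$ at $x_N = 0$ with common proportionality constant $Q$, which is exactly the identity \eqref{eq:ddd2}. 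The main obstacle is the mismatch between $K^\psi_s$ and $K^+_s$: since $Q x_N^{2s-1}$ is not in the kernel of $\mathcal{D}_{K^\psi_s}$, one must verify that the correction it produces in $\tilde g_\xi$ decays at precisely the rate $\rho^{\gamma}$ compatible with the blow-up scaling $\rho^{-\gamma}$. This is where \eqref{eq:remainder-Kernel-flat0}, together with the vanishing of $\psi$ and $\nabla \psi$ at the origin, is essential.
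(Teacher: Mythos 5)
Your overall strategy --- blow-up along the axis after subtracting $Q\,x_N^{2s-1}$, interior estimates from \cite{Fall-reg-1,Fall-reg-2}, rescaling, and a Leibniz computation with the weights $x_N^{2s-1}$ and $x_N^{2-2s}$ --- is the same as the paper's, and your endgame for \eqref{eq:ddd2} (namely $\xi_N^{2-2s}\nabla\bigl(v-Q x_N^{2s-1}\bigr)(\xi)=O(\rho^{\gamma+1-2s})\to 0$ since $\gamma>2s-1$) is correct. The gap is at the step you yourself flag as ``the main obstacle'' and then do not carry out: converting the mismatch term $\mathcal{D}_{K^\psi_s-K^+_s}\bigl(Q\,x_N^{2s-1},\varphi\bigr)$ into an admissible source with decay $\rho^{\beta+2s-1}\le\rho^{\gamma}$. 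The estimate you invoke, \eqref{eq:remainder-Kernel-flat0}, only gives $|K^\psi_s-K^+_s|\lesssim |x-y|^{-N-2s}$ uniformly; it shows neither that the principal-value integral $\int\bigl(x_N^{2s-1}-y_N^{2s-1}\bigr)\bigl(K^\psi_s-K^+_s\bigr)(x,y)\,dy$ converges nor that it is small. One needs the refined bound \eqref{eq:remainder-Kernel-flat}, which after rescaling yields the gain $\rho^{\beta}\bigl(|x'|^{\beta}+|y'|^{\beta}\bigr)|x-y|^{-N-2s}$; and even then, pairing this kernel difference with the first difference of the (Lipschitz, after cut-off) profile produces an integrand of order $r^{\beta-2s}$ near the diagonal, integrable precisely when $\beta>2s-1$. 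This is why the paper's proof splits: for $\beta\le 2s-1$ the correction is kept in weak form and absorbed through the perturbed-kernel interior estimate, while for $\beta>2s-1$ the pointwise correction $F^2_\xi$ is written in polar coordinates and controlled by the symmetrized second difference of the $C^2$ profile together with the bound $|\mathcal{A}_\xi(x,r,\theta)-\mathcal{A}_\xi(x,-r,\theta)|\le C r^{\beta}$. None of this appears in your argument, and it is exactly where the hypothesis $\beta>2s-1$ of part (ii) enters.

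A second, smaller issue: in part (i) you claim a single interior estimate $\|u_\xi\|_{C^{\gamma}(B_{1/2})}\le C$. Since $\gamma=\min(2s-N/p,\beta+2s-1)$ may exceed $1$ while in part (i) one only assumes $p>N$ (not $2s-N/p>1$), such an estimate is not available from an $L^p$ right-hand side. What the Leibniz step actually requires, and what the paper derives, are the two sub-unit estimates $\|u_\xi\|_{C^{2s-1}(B_{1/9})}\le C\rho^{\gamma}$ and $\|u_\xi\|_{C^{\gamma-(2s-1)}(B_{1/9})}\le C\rho^{\gamma}$, noting that $\gamma-(2s-1)=\min(1-N/p,\beta)$ is exactly the exponent appearing in the seminorm of $v/x_N^{2s-1}$.
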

\begin{proof}
Let $\x:=(0,2\rho ) \in B_{1}^+$ with $\rho<1/2$.  We define $ u_\x(y)=v(\x+\rho y) -Q((\x+ \rho y)\cdot e_N)_+^{2s-1}= v(\x+\rho y) -Q(2\rho+\rho y_N)_+^{2s-1}$ and $g_\x(y)= \rho^{2s}g(\x+\rho y)$.   We have 
\be \label{eq:Dir-near-final-J}
\calD_{K_{\x,s}} (u_\x,\vp )+\rho^{-\b} \calD_{K_{\x,s}} (U_\x,\vp )= \int_{\R^N}  g_\x( x) \vp(x)\, dx  \qquad\textrm{  for all $\vp \in C^\infty_c(B_1)$,}
\ee
where $K_{\x,s}(x,y)=1_{\Sig}(x)1_{\Sig}(y)\frac{c_{N,s}}{|\Psi_\x(x)-\Psi_\x(y)|^{N+2s}}$, $\Sig:=\{(y',y_N)\,:\, y_N>-2\}$,   $\Psi_\x( y)=\frac{1}{\rho}\Psi(\rho y+\x)$ and $$ U_\x(y):=\rho^{2s-1+\b} (y_N+2)^{2s-1}_+.$$ By a change of variable, we get $\Ds_{\Sig} U_\x=0$ on $\Sig$. We then define 
$$
K_{\x,s}'(x,y)=\rho^{-\b} \left(K_{\x,s}(x,y)-1_{\Sig}(x)1_{\Sig}(y)\frac{c_{N,s}}{|x-y|^{N+2s}} \right).
$$
We thus get from \eqref{eq:Dir-near-final-J},  
\be \label{eq:Dir-near-final-JJ}
\calD_{K_{\x,s}} (u_x,\vp )+ \calD_{K_{\x,s}'} (U_\x,\vp )= \int_{\R^N}   g_\x( x) \vp(x)\, dx  \qquad\textrm{  for all $\vp \in C^\infty_c(B_1)$.}
\ee
Clearly by \eqref{eq:remainder-Kernel-flat}, $K_{\x,s}'$ satisfies 
$$
||x-y|^{N+2s} K_{\x,s}'(x,y)|\leq C(|x|^\b+|y|^\b).  
$$
We let $\chi\in C^\infty_c(B_{1/2})$ with $\chi=1$ on  $B_{1/4}$ and put $\ti U_\x=\chi U_\x\in C^2_c(\R^N)$. Then thanks \eqref{eq:Dir-near-final-JJ}, we have the following equation
\be \label{eq:Dir-near-final}
\calD_{K_{\x,s}} (u_x,\vp )+ \calD_{K_{\x,s}'} (\ti U_\x,\vp )=\int_{\R^N} G_\x( x) \vp(x)\, dx  \qquad\textrm{  for all $\vp \in C^\infty_c(B_{1/8})$,}
\ee
where, for all $x\in B_{1/8}$,  
$$
G_\x(x)=g_\x(x)+1_{B_{1/8}}(x) \int_{|y|\geq 1/4}(1-\chi(y)) U_\x(y)K_{\x,s}'(x,y)\, dy.
$$
  By \eqref{eq:Dir-near-final} and  interior regularity (see e.g. \cite[Corollary 3.5]{Fall-reg-2}) and using \eqref{eq:bnd-Q},    we have  that 
\begin{align*}
\|u_\x\|_{C^{2s-1}(B_{1/9})} \leq C&\left( \|u_\x\|_{L^2(B_1)}+ \int_{|y|\geq1/4}|u_\x(y)||y|^{-N-2s}\, dy  + \|G_\x\|_{L^p(\R^N)}   \right)\\
&\leq C \left( \|u_\x\|_{L^2(B_1)}+ \int_{|y|\geq1/4}|u_\x(y)||y|^{-N-2s}\, dy + \rho^{2s-N/p}+\rho^{\b+2s-1} \right).
\end{align*}
We put $\g=\min(2s-N/p,\b+2s-1)$.  In view of Corollary \ref{eq:cor-reg-BR1} we have 
\be\label{eq:u-xi-tail-est}
 \|u_\x\|_{L^2(B_1)}\leq C \rho^{N/2+\g }.
\ee
 Moreover,   using similar arguments as in the proof of Corollary \ref{cor:C-gam-reg-Dir}, we can estimates the terms 
 \be \label{eq:u-xi-tail-est-T}
 \int_{|y|\geq1/4}|u_\x(y)||y|^{-N-2s}\, dy \leq C \rho^\g.
 \ee
  We then obtain
\begin{align}
\|u_\x\|_{C^{2s-1}(B_{1/9})}  
& \leq C\left( \rho^{\g} +\rho^{2s-N/p} +  \rho^{2s+\b-1}   \right)\leq C \rho^{\g}. \label{eq:est-u-x}
\end{align}
In particular, since $\g-(2s-1)\leq \b $, then   provided $\b\leq 2s-1$,   we have
\begin{align*}
 \|u_\x\|_{C^{\g-(2s-1)}(B_{1/9})}  \leq  C \rho^{\g}.
\end{align*}
 Scaling back, we get
\be  \label{eq:v-Qx-prime}
\|  v  -Q  y_N^{2s-1} \|_{L^\infty(B_{\rho/9}(\x))}\leq C  \rho^{\g}, \qquad [ v -Q y_N^{2s-1}   ]_{C^{\g-(2s-1)  }(B_{\rho/9}(\x))}\leq C\rho^{2s-1}.
\ee
The above three estimates,  \eqref{eq:est-u-x} and \eqref{eq:bnd-Q} imply that 
\be \label{eq:lllk}
  [ v]_{C^{2s-1}(B_{\rho/9}(\x))}+|v(\x)/\x_N^{2s-1}| + [ v/y_N^{2s-1}]_{C^{\g-(2s-1)}(B_{\rho/9}(\x))}\leq C,
\ee
recalling that $\x_N=\rho/2$ and $[ 1/y_N^{2s-1}]_{C^{\g-(2s-1)}(B_{\rho/9}(\x))}\leq C\rho^{-\g}$.  We thus get  $(i)$ for $\b\leq 2s-1$.\\
We now prove $(ii)$ and $(i)$ in the case $\b>2s-1$.    We define the new kernel 
$$
\cK_\x(x,y)=c_{N,s}\rho^{-\b}\left( \frac{1}{|\Psi_\x(x)-\Psi_\x(y)|^{N+2s}}-  \frac{1}{|x-y|^{N+2s}} \right)
$$
and rewrite \eqref{eq:Dir-near-final} as
\be \label{eq:Dir-near-final-Gr}
\calD_{K_{\x,s}} (u_\x,\vp ) = \int_{\R^N}\left(  g_\x( x) +F_\x^1(x) -F_\x^2(x)\right)\vp(x)\, dx  \qquad\textrm{  for all $\vp \in C^\infty_c(B_{1/8})$,}
\ee
where  
$$
F_\x^1(x)=  \int_{\R^N\setminus \Sig}\left( \ti U_\x(x)-\ti U_\x(y) \right) \cK_\x(x,y)dy, \quad
F_\x^2(x)=p.v. \int_{\R^N}\left( \ti U_\x(x)-\ti U_\x(y) \right) \cK_\x(x,y)dy.
$$
Since $ |x-y|\geq 1/2$ for $x\in B_{1/8}$ and $y\in \R^N\setminus \Sig$, using \eqref{eq:remainder-Kernel-flat},  we easily see that  
\be\label{eq:F1e}
|F_\x^1(x)|\leq C(N) \| \ti U_\x \|_{L^\infty(\R^N)}\leq C(N) \rho^{\b+2s-1}.
\ee
On the other hand, using polar coordinates,  w can write
\begin{align*}
F_\x^2(x)&=\frac{1}{2}\int_{S^{N-1}}\int_{0}^\infty \frac{2 \ti U_\x(x)- \ti U_\x(x+r\th)- \ti U_\x(x-r\th)}{r^{1+2s}}(\cA_\x(x,r,\th)+\cA_\x(x,-r,\th))\, drd\th\\
&+\frac{1}{2}\int_{S^{N-1}}\int_{0}^\infty \frac{ \ti U_\x(x+r\th)- \ti U_\x(x-r\th)}{r^{1+2s}}(\cA_\x(x,r,\th)-\cA_\x(x,-r,\th))\, drd\th,
\end{align*}
where
$$
\cA_\x(x,r,\th)= r^{N+2s}\cK_\x(x,x+r\th)=c_{N,s}\rho^{-\b}\left( \frac{1}{|\int_0^1 D\Psi(\rho x+t r\rho \th )\th|^{N+2s}} -1\right).
$$
Recall that  $\Psi( y+\x )=y+ (0, y_N+2\rho+\psi(y')   )$. As a consequence,  for $x\in B_{1/8}$, $r>0$ and $\th \in \de B_1$, 
$$
|\cA_\x(x,r,\th)|\leq C (|x|^\b+r^\b), \qquad |\cA_\x(x,r,\th)-\cA_\x(x,r,-\th)|\leq C r^\b.
$$
Since $\ti U_\x\in C^2(\R^N)$ and $1>\b>2s-1>0$,   we obtain
$$
\|F_\x^2\|_{L^\infty(B_{1/8})} \leq C(N)\|\ti U_\x \|_{C^2(\R^N)}\leq C(N) \rho^{\b+2s-1}.
$$
From this, \eqref{eq:F1e}, \eqref{eq:u-xi-tail-est}, \eqref{eq:u-xi-tail-est-T} and the fact that $\|g_\x\|_{L^p(\R^N)}\leq \rho^{2s-N/p}$,  we can apply  \cite[Theorem 4.1]{Fall-reg-1} to  \eqref{eq:Dir-near-final-Gr} and get
$$
\|u_\x\|_{C^{\a}(B_{1/9})}  \leq  C \rho^{\g},
$$
for all $\a\in (0, \min(2s-N/p,1))$.
In particular,  
\begin{align}
 \|u_\x\|_{C^{\g-(2s-1)}(B_{1/9})} + \|u_\x\|_{C^{2s-1}(B_{1/9})} \leq  C \rho^{\g}, 
\end{align}
Then as above, we obtain 
\be \label{eq:ggggh}
  [ v]_{C^{2s-1}(B_{\rho/9}(\x))}+|v(\x)/\x_N^{2s-1}| + [ v/y_N^{2s-1}]_{C^{\g-(2s-1)}(B_{\rho/9}(\x))}\leq C.
\ee
  We thus get  $(i)$ for $\b> 2s-1$.\\
We now finalize the proof of $ (ii)$. Here, we recall that  $2s-N/p>1$ and $\b>2s-1$. Then  \cite[Theorem 1.3]{Fall-reg-2} applied to the equation \eqref{eq:Dir-near-final-Gr} yields
\begin{align}\label{eq:dddf}
\|u_\x\|_{C^{1,\s}(B_{1/9})}  \leq  C \rho^{\g},
\end{align}
where $\s=2s-\frac{N}{p}-1$. Hence by scaling back, we obtain
\be \label{eq:fffr}
\|\n\left(  v-Q y_N^{2s-1} \right)\|_{L^\infty(B_{\rho/9 }(\x) )}\leq C\rho^{\g-1}, \qquad [\n\left(  v-Q y_N^{2s-1} \right)]_{C^{\s}(B_{\rho/9 }(\x) )}\leq C \rho^{\g-1-\s}.
\ee
Let $w(y)=y_N^{2-2s} \n\left(  v-Qy_N^{2s-1} \right)$. Then, from the above estimates,  for $y_1,y_2\in B_{\rho /9}(\x) $, we have 
\begin{align*}
&|w(y_1)-w(y_2)|\leq C  |y_1-y_2| \rho^{1-2s} \|\n\left(  v-Qy_N^{2s-1} \right)\|_{L^\infty(B_{\rho/9 }(\x))}\\
&+ C \rho^{2-2s}  |y_1-y_2|^{\s} [\n\left(  v-Q y_N^{2s-1} \right)]_{C^{\s}(B_{\rho/9 }(\x))}\\
&\leq C |y_1-y_2| \rho^{1-2s+\g-1}+ C \rho^{2-2s+ \g-1-\s}  |y_1-y_2|^{\s}\\
&= C |y_1-y_2|^{\s+1-\s} \rho^{1-2s+\g-1}+ C \rho^{2-2s + \g-1-\s}  |y_1-y_2|^{\s} \leq  C \rho^{\min(2-2s, N/p)}   |y_1-y_2|^\s.
\end{align*}
We then get, recalling that $\s=2s-\frac{N}{p}-1$, 
$$
\left[  w \right]_{C^{2s-\frac{N}{p}-1}(B_{\rho/9}(\x) )} \leq C \rho^{\min(2-2s, N/p)}\leq C.
$$
Moreover by \eqref{eq:fffr}, we have 
\be\label{eq:oppp}
|w(\x)| \leq C\x_N^{\g+1-2s}.
\ee
Note also  that  $y_N^{2-2s} \n v(y)=w(y) + (2s-1)Q  e_N$,   we thus deduce \eqref{eq:ddd1}. \\
Next, from the two estimates above, we can apply Lemma \ref{lem:Prop-of-RS} to deduce that
$$
\| w\|_{C^{2s-\frac{N}{p}-1}({ {B_{1/9}^+} \cap \R e_N)}}\leq C.
$$
Therefore passing to the limit in \eqref{eq:oppp}, we see that $w(0)=0$, so that $\lim_{\x\to 0} \x_N^{2-2s} \n  v(\x)= (2s-1)Q e_N$.
In addition \eqref{eq:lllk}, \eqref{eq:ggggh} and Lemma \ref{lem:Prop-of-RS}   imply that  $ v/y_N^{2s-1}(0,\cdot)\in C^{\min(1-N/p,\b)}\left(0, 1\right)$.
Now,  letting $\x\to 0$ in \eqref{eq:v-Qx-prime}, we find that $[v/y_N^{2s-1}](0)=Q$ and we get  \eqref{eq:ddd2}. 
\end{proof}

\section{Proof of the main results}\label{s:poof-MR}
We start with the following result which allows us to study an equivalent problem in the half-space.
Let $\O$ be an open subset of $\R^N$ of class $C^{1,\b}$, with $0\in \de\O$ and $\b\geq 0$.  Up to a scaling and a translation, we may assume that $B_1\cap \de\O$ is contained in  the graph of a $C^{1,\b}$ function $\g$.  For $q\in \de\O $, let $\nu(q)$ denote the unit interior normal vector of $\O$ at $q$ and denote $T_q\de\O=\nu(q)^\perp$ the tangent plane of $\de\O$ at $q$.  By the local inversion theorem,  there exist $r_0>0$ depending only on $N,\b$ and the $C^{1,\b}$ norm of $\g$ such that for any $q\in B_{r_0}\cap \de\O$,  there exist an orthonormal basis $(E_1(q)\dots,E_{N-1}(q))$  of $T_q\de\O$ and   $\ti \phi\in C^{1,\b}(  B_{r_0}' )$ such that    
$$
x'\mapsto q+\sum_{i=1}^{N-1} x_i E_i(q)+\ti \phi(x')\nu(q):  B_{r_0}' \to \de\O
$$
with $\ti \phi(0)=|\n\ti \phi(0)|=0$.
 We let $\eta\in C^\infty_c(B_{r_0}')$ with $\eta\equiv 1$ on $B_{r_0/2}'$ and put $\phi=\eta \ti\phi$.
For all $q\in B_{r_0}\cap \de\O$, we define
$$
\Upsilon:\R^N\to \R^N, \qquad \Upsilon(x',x_N)=q+ \sum_{i=1}^{N-1} x_i E_i(q)+\left(x_N+ \phi(x') \right)\nu(q).
$$
Then,  see e.g. \cite{Fall-reg-1}, $\Upsilon$  is a global $C^{1,\b}$ diffeomorphism  with $\textrm{Det}D\Upsilon(x)=1$ for all $x\in \R^N$.  We define $Q_\d:=B_\d'\times (-\d,\d)$ and  $\calQ_\d:=\Upsilon(Q_\d )$. We have that $\Upsilon$   parameterizes   $\O\cap \calQ_{ r_0/2} $ over $Q_{r_0/2}\cap \R^N_+$.  It is easy to see that
$$\|D \Upsilon-Id\|_{L^\infty(\R^N)}=\|\n \phi\|_{L^\infty(\R^{N-1})}= \|\n \phi\|_{L^\infty(B_{r_0}')}\to 0 \qquad\textrm{ as $r_0\to 0$. }$$
Since, recalling \eqref{eq:defPsi},  for all $y'\in B_{r_0/2}'$ and $x\in Q_{r_0/2}$,
$$
|\Upsilon(x',x_N)-\Upsilon(y',0)|^2=|(x',x_N)-(y',0)|^2+\left( O(x_N|x'-y'|)+O(|x'-y'|^2) \right)\|\n \phi\|_{L^\infty (B_{r_0}')},
$$
decreasing $r_0$ if necessary,    by Young's inequality, we have that 
$$
 \frac{x_N}{2}\leq  \textrm{dist}(\Upsilon(x',x_N),\de\O)\leq   2 x_N \quad\textrm{ for all $x\in Q_{r_0/4}\cap \R^N_+$.}
$$
We state the following result.
\begin{lemma}\label{lem:flat-rpoblem}
Let $\O$, $r_0$,  $\phi$ and $\Upsilon$ be as above.  
For $s\in [s_0,1)$, with $s_0\in (0,1)$,  let $u\in H^s_{loc}(\ov \O)\cap L^2(\O)$ be a solution to \eqref{eq:first-eq1} and for $s\in [s_0,1)$,  with $s_0\in (1/2,1)$, we let $v\in H^s_0(\O)$ be a solution to \eqref{eq:first-eq2}, with $f\in L^p(\R^N)$ and $p>\frac{N}{2s_0}$. Then there exist $\hat{u}\in H^s(\R^N_+) $,    $\hat{v}\in H^s_0(\R^N_+)$, $\hat{f}\in L^p(\R^N)$  and $\psi\in C^{1,\b}(\R^{N-1})$  such that 
$$
\calD_{K^{\psi}_s}(\hat{u}, \vp)=\int_{B_2}  \hat{f}(x)\vp(x)\, dx \qquad\textrm{ for all $\vp \in C^1_c(B_2)$}
$$
and
$$
\calD_{K^{\psi}_s}(\hat{v}, \vp)=\int_{B_2}  \hat{f}(x)\vp(x)\, dx \qquad\textrm{ for all $\vp \in C^1_c(B_2^+)$,}
$$
where $\psi(x)=\frac{1}{r}\phi(rx)$ and $r=\frac{r_0}{40}$. Moreover, $\hat{u}=u\circ\Upsilon(r x)$,  $\hat{v}=v\circ\Upsilon(r x)$,  
$$
\|\hat{u}\|_{L^2(\R^N_+)}\leq C   \|u\|_{L^2(\O)}, \qquad \|\hat{v}\|_{L^2(\R^N)}\leq  C   \|v\|_{L^2(\O)}
$$
and $
\|\hat{f}\|_{L^p(\R^N)}\leq C  \left(\|u\|_{L^2(\O)}+\|{f}\|_{L^p(\R^N)}   \right).
$
Here, $C= C (N,r_0, s_0,p) $.
\end{lemma}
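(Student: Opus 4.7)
The plan is to push the weak formulation through the (rescaled) flattening $\Psi(X) := r^{-1}\Upsilon(rX)$ and absorb every non-translation-invariant piece into the new right-hand side $\hat f$. Written in the orthonormal basis $(E_1(q),\dots,E_{N-1}(q),\nu(q))$, one has $\Psi(X',X_N) = q/r+(X',X_N+\psi(X'))$ with $\psi(X'):=\phi(rX')/r\in C^{1,\b}(\R^{N-1})$, which is exactly the form appearing in \eqref{eq:def-K-psi}; moreover $\|\n\psi\|_{L^\infty(\R^{N-1})}=\|\n\phi\|_{L^\infty(\R^{N-1})}$ is as small as we need by choice of $r_0$. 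Since $r=r_0/40$, the set $r\ov{B_2}$ sits safely inside $\calQ_{r_0/20}$, and $\Upsilon$ bijectively maps $Q_{10}\cap\R^N_+$ onto $\O\cap\calQ_{r_0/4}$.

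Next I would choose a cutoff $\chi\in C^\infty_c(\R^N)$ with $\chi\equiv 1$ on $Q_{10}$ and $\supp\chi$ bounded, and set $\hat u(X) := \chi(X)\,u(\Upsilon(rX))$ and $\hat v(X) := \chi(X)\,v(\Upsilon(rX))$, extending $\hat v$ by zero to $\R^N\setminus\R^N_+$. Volume preservation $\det D\Upsilon\equiv 1$ at once yields $\|\hat u\|_{L^2(\R^N_+)}\le C\|u\|_{L^2(\O)}$ and $\|\hat v\|_{L^2(\R^N)}\le C\|v\|_{L^2(\O)}$. Since $v=0$ a.e.\ on $\R^N\setminus\O$, the pull-back $\hat v$ vanishes on $\R^N\setminus\R^N_+$, and together with $\hat v\in H^s(\R^N)$ this places $\hat v$ in $H^s_0(\R^N_+)$ for $s>1/2$, via the identification with $\calH^s_0(\R^N_+)$ recalled in \eqref{eq:def-calHs0}.

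Given a test function $\vp\in C^1_c(B_2)$ (resp.\ $\vp\in C^1_c(B_2^+)$), I would set $\tilde\vp(z):=\vp(r^{-1}\Upsilon^{-1}(z))$ on $\calQ_{r_0/20}$ and zero elsewhere, so that $\tilde\vp\in C^1_c(\ov\O)$ (resp.\ $C^1_c(\O)$). Testing \eqref{eq:first-eq1} (resp.\ \eqref{eq:first-eq2}) against $\tilde\vp$, I would split the double integral into the ``near'' piece with both $x,y\in\O\cap\calQ_{r_0/4}$ and the ``far'' piece where at least one of $x,y$ lies in $\O\setminus\calQ_{r_0/4}$. On the near piece, the change of variables $x=\Upsilon(rX)$, $y=\Upsilon(rY)$ with $X,Y\in Q_{10}\cap\R^N_+$ together with the identity $|\Upsilon(rX)-\Upsilon(rY)|=r\,|\Psi(X)-\Psi(Y)|$ (rotations being isometries) and the $r^{2N}$ Jacobian produces $r^{N-2s}\,\calD_{K^\psi_s}(\hat u,\vp)$, since $\chi\equiv 1$ on $Q_{10}$ guarantees $\hat u=u\circ\Upsilon(r\cdot)$ throughout the relevant range. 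By symmetry of the kernel the far piece reduces to $r^N\int_{B_2}\vp(X)F(X)\,dX$ with
\[
F(X) := c_{N,s}\!\int_{\O\setminus\calQ_{r_0/4}} \frac{u(\Upsilon(rX))-u(y)}{|\Upsilon(rX)-y|^{N+2s}}\,dy,
\]
while the right-hand side $\int_\O f\,\tilde\vp$ transforms into $r^N\int_{B_2} f(\Upsilon(rX))\vp(X)\,dX$. Dividing by $r^{N-2s}$ delivers the desired identity $\calD_{K^\psi_s}(\hat u,\vp)=\int_{B_2}\hat f(X)\vp(X)\,dX$ with $\hat f(X):=r^{2s}\bigl(f(\Upsilon(rX))-F(X)\bigr)$ on $B_2$ (and zero elsewhere). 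The computation for $\hat v$ is identical, the test functions being supported in $B_2^+$.

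The principal technical obstacle—and really the only nontrivial one—is the $L^p$ bound on $\hat f$. The first summand is handled by volume preservation: $\|r^{2s}f\circ\Upsilon(r\cdot)\|_{L^p(\R^N)}\le r^{2s-N/p}\|f\|_{L^p(\R^N)}$. For the correction $F$, the strict separation $\dist(\Upsilon(rB_2),\O\setminus\calQ_{r_0/4})\ge c\,r_0$ (built into the choice $r=r_0/40$) makes the kernel $|\Upsilon(rX)-y|^{-N-2s}$ uniformly bounded for $X\in B_2$, so that $|F(X)|\le C(r_0)(|u(\Upsilon(rX))|+\|u\|_{L^1(\O)})\le C\|u\|_{L^2(\O)}$ pointwise on $B_2$, whence $\|F\|_{L^p(B_2)}\le C\|u\|_{L^2(\O)}$. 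Altogether $\|\hat f\|_{L^p(\R^N)}\le C(\|u\|_{L^2(\O)}+\|f\|_{L^p(\R^N)})$, and the analogous bound holds with $v$ in place of $u$. The remaining verifications (support separation, bookkeeping for the cutoff $\chi$, and that the constants can be chosen independently of $s\in[s_0,1)$) are routine.
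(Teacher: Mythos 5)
Your setup --- the rescaled flattening, the identification of the kernel $K^{\psi}_s$, the cutoff, and the $L^2$ bounds for $\hat{u}$ and $\hat{v}$ --- matches the paper's proof. The argument fails, however, at precisely the step you flag as the only nontrivial one: the $L^p$ bound on the far-field correction. Splitting your $F$ as $F(X)=V(X)\,u(\Upsilon(rX))-c_{N,s}\int_{\O\setminus\calQ_{r_0/4}}u(y)\,|\Upsilon(rX)-y|^{-N-2s}\,dy$, with $V(X):=c_{N,s}\int_{\O\setminus\calQ_{r_0/4}}|\Upsilon(rX)-y|^{-N-2s}\,dy$ bounded, the second summand is indeed $O(\|u\|_{L^2(\O)})$ uniformly in $X$ (Cauchy--Schwarz together with the separation of supports), but the first is a genuine zeroth-order term $V\hat{u}$. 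Your inequality $|u(\Upsilon(rX))|\le C\|u\|_{L^2(\O)}$ is false pointwise for a function that is merely in $L^2$; what you actually obtain is $\|F\|_{L^p(B_2)}\le C\bigl(\|\hat{u}\|_{L^p(B_2^+)}+\|u\|_{L^2(\O)}\bigr)$, and since $p>N/(2s_0)$ may well exceed $2$, this is not controlled by the data $\|u\|_{L^2(\O)}+\|f\|_{L^p}$.

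This is exactly the obstruction the paper's proof is organized around. There, the term $V\hat{u}$ is first kept on the left-hand side as a potential with $V\ge c>0$; Kato's inequality then shows that $|\hat{u}|$ is a subsolution of the potential-free problem with right-hand side $|g|\in L^p$, a comparison function $\eta$ (solving that problem with exterior data $|\hat{u}|$) is produced by direct minimization, and Corollary \ref{eq:cor-reg} (Corollary \ref{cor:C-gam-reg-Dir} in the Dirichlet case) applied to $\eta$ yields $\|\hat{u}\|_{L^\infty(B_1^+)}\le C(\|u\|_{L^2(\O)}+\|g\|_{L^p(\R^N)})$. Only after this quantitative local $L^\infty$ bound is available can $V\hat{u}$ be absorbed into $\hat{f}$ in $L^p$. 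The missing ingredient in your write-up is therefore a local boundedness estimate for $u$ up to the boundary with the correct dependence on the data; in this paper it is itself a consequence of the a priori boundary regularity machinery of Sections \ref{s:Reg-Neum}--\ref{s:Reg-Dir}, and it cannot be dismissed as routine bookkeeping.
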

\begin{proof}
We will prove the result in the Neumann case. The proof of the Dirichlet case  is similar.
 Let $\nu(x,y)=\frac{c_{N,s}}{2|x-y|^{N+2s}}$.  Put $r=\frac{r_0}{40}$ and let $\ti u= u\chi_r$, with $\chi_r\in C^\infty_c(\calQ_{10 r})$ such that $ \chi_r\equiv 1$ on $ \calQ_{9 r}$. We then get 
$$
\Ds_{\ov \O } \ti u=\ti f \qquad\textrm{ in $\calQ_{9 r }$}
$$
for some function $\ti f\in L^p(\R^N)$ and satisfying
\be\label{eq:esima-1}
\|\ti f\|_{L^p(\R^N)}\leq C(N,s_0,r)\left(\|u\|_{L^2(\O)}+ \|f\|_{L^p(\R^N)} \right). 
\ee
Next,   for $\vp\in C^\infty_c(\calQ_{7r})$,  
\begin{align*}
&\int_{\O}\ti f(x)\vp(x)\, dx= \int_{\O\times\O }( \ti u(x)-\ti u(y))(\vp(x)-\vp(y))\nu(x,y) \, dxdy\\
&= \int_{\O\cap \calQ_{8r}\times \O\cap \calQ_{8r} }( \ti u(x)-\ti u(y))(\vp(x)-\vp(y))\nu(x,y) \, dxdy\\
&+ 2 \int_{\O\cap \calQ_{8r}}\int_{ \O\setminus \calQ_{8r} }( \ti u(x)-\ti u(y))(\vp(x)-\vp(y)) \nu(x,y)\, dxdy\\
&= \int_{\O\cap \calQ_{8r}\times \O\cap \calQ_{8r} }( \ti u(x)-\ti u(y))(\vp(x)-\vp(y)) \nu(x,y)\, dxdy\\
&+ 2 \int_{\O\cap \calQ_{7r}} \ti u(x)\vp(x) \int_{ \O\setminus \calQ_{8r} }   \nu(x,y)\, dxdy+ 2 \int_{\O\cap \calQ_{7r}}\vp(x) \int_{\O\setminus \calQ_{8r} } \ti u(y)\nu(x,y)  \, dxdy.
\end{align*}
Letting $$\ti V(x)=21_{\calQ_{7r}}(x) \int_{ \O\setminus \calQ_{8r} }   \nu(x,y)\, dy, \quad F(x)=\ti f(x)-21_{Q_{7r}}(x)\int_{\O\setminus \calQ_{8r} }\ti u(y)\nu(x,y)  \, dy,$$ we deduce that 
\be\label{eq:eqa-calB}
\int_{\O\cap \calQ_{8r}\times \O\cap \calQ_{8r} }( \ti u(x)-\ti u(y))(\vp(x)-\vp(y))\nu(x,y) \, dxdy+ \int_{\O}\ti  V(x) \ti u(x)\vp(x)\, dx= \int_{\O}F(x)\vp(x) 
\ee
and we note that, for some $C=C(N,s_0,r)$ 
\be\label{eq:bnd-rhs}
\min_{\calB_{7r}} \ti V>0, \qquad \|\ti V\|_{L^\infty(\R^N)}\leq C, \qquad \|F\|_{L^p(\R^N)}\leq C ( \|u\|_{L^2(\O)} + \|f\|_{L^p(\R^N)} ). 
\ee
Let $w=\ti  u\circ\Upsilon\in H^s(\R^N_+) $, $\ov  V=\ti V\circ \Upsilon$  and $\ov F= F\circ \Upsilon\in L^p(\R^N)$. Then by a change of variable and using \eqref{eq:eqa-calB},  we get, for $\vp\in C^\infty_c(B_{4r})$,  
\begin{align*}
&\calD_{K^{\phi}_s}(w,\vp):=\frac{1}{2}\int_{\R^N\times\R^N}( w(x)-w(y))(\vp(x)-\vp(y))K^{\phi}_s(x,y)\, dxdy\\
&=\frac{1}{2}  \int_{ B_{6r}^+\times  B_{6r}^+}( w(x)-w(y))(\vp(x)-\vp(y))K^{\phi}_s(x,y)\, dxdy\\
&+   \int_{ B_{6r}^+}\int_{ \R^N\setminus B_{6r}^+}( w(x)-w(y))(\vp(x)-\vp(y))K^{\phi}_s(x,y)\, dxdy\\
&=\frac{1}{2}\int_{\R^N}\left(-\ov V(x) w(x)+\ov F(x)\right)\vp(x)\, dx+     \int_{ B_{4r}^+} w(x)\vp(x)\int_{ \R^N\setminus B_{6r}^+} K^{\phi}_s(x,y)\, dxdy\\
&+    \int_{ B_{4r}^+}  \vp(x)\int_{ \R^N\setminus B_{6r}^+} w(y)K^{\phi}_s(x,y)\, dxdy .
\end{align*}
We then  get, for all $\vp\in C^\infty_c(B_{4r})$, 
$$
\calD_{K^{\phi}_s}(w,\vp)+\int_{\R^N} H(x)u(x)\vp(x)\, dx=\int_{\R^N} G(x)\vp(x)\, dx,
$$
where $$ H(x):=\ov  V(x)+ 1_{ B_{5r}}(x) \int_{ \R^N\setminus B_{6r}^+} K^{\phi}_s(x,y)\, dy $$ and $$G(x):=1_{B_{5r}}(x) \int_{ \R^N\setminus B_{6r}^+} w(y)K^{\phi}_s(x,y)\, dy+\ov F(x).$$
We  now   scale the variables by putting $\hat{u}(x)= w( rx)$, $g(x)=1_{B_{4}}(x)G(r x)$,  $V(x)=1_{B_{4}}(x)H(r x)$  and $\psi(x)=\frac{1}{r}\phi(r x)$. We then have that
\be \label{eq:norm-v} 
\hat{u}\in H^s(\R^N_+), \qquad \|\hat{u}\|_{L^2(\R^N_+)} \leq C    \|u\|_{L^2(\O)} ,
\ee
\be \label{eq:-norm-g}
\|g\|_{L^p(\R^N)}\leq C(N,s_0,r,p) \left(  \|f\|_{L^p(\R^N)}+ \|u\|_{L^2(\O)} \right),
\ee
\be\label{eq:min-of-V}
 \min_{ B_2} V>0, \qquad \|V\|_{L^\infty(B_2)}\leq C (N,s_0,r)
\ee
and for every $\vp\in C^\infty_c(B_2)$,
\be \label{eq:almost-final-v}
\calD_{K^{\psi}_s}(\hat{u},\vp)+\int_{\R^N} V(x)\hat{u}(x)\vp(x)\, dx=\int_{\R^N} g(x)\vp(x)\, dx. 
\ee
Now by the Kato inequality and \eqref{eq:min-of-V}, for all  $\vp\in C^\infty_c(B_2)$ with $\vp\geq 0$,   we have that
\begin{align*}
\calD_{K^{\psi}_s}(|\hat{u}|,\vp)\leq \calD_{K^{\psi}_s}(|\hat{u}|,\vp)+\int_{\R^N} V(x)|\hat{u}(x)|\vp(x)\, dx \leq \int_{\R^N} |g(x)|\vp(x)\, dx. 
\end{align*} 
%
By a direct minimization argument, there exists a unique function $ \eta\in H^s(\R^N_+)$ such that $\eta=|\hat{u}|$ on $\R^N_+\setminus B_2$  and for all $\vp\in C^\infty_c(B_2)$, 
  $$
\calD_{K^{\psi}_s}(\eta,\vp)=\int_{\R^N} |g(x)|\vp(x)\, dx. 
$$
 By construction, we have that 
\be \label{eq:seq-increase}
 |  \hat{u}|\leq  \eta   \qquad\textrm{ in $B_2$.}
\ee
Applying Corollary \ref{eq:cor-reg}, we deduce  that 
\be \label{eq:eta-L-infty}
\|\eta\|_{L^\infty(B_1^+)}\leq C(N,s_0,p,r)   \left( \|\eta\|_{L^2(\R^N_+)}+ \|g\|_{L^p(\R^N)}\right)\leq C (N,s_0,p) \left( \|\hat{u}\|_{L^2(\R^N_+)}+ \|g\|_{L^p(\R^N)}\right).
\ee
From this and  \eqref{eq:seq-increase}, we finally get 
\be\label{eq:est-vL-infty}
\|\hat{u}\|_{L^\infty(B_1^+)}\leq  C(N,s_0,r,p) \left( \|\hat{u}\|_{L^2(\R^N_+)}+ \|g\|_{L^p(\R^N)}\right) \leq  C   \left( \|u\|_{L^2(\O)}+ \|g\|_{L^p(\R^N)}\right). 
\ee
We can now rewrite  \eqref{eq:almost-final-v} in the form,  
$$
\int_{\R^N\times\R^N}( \hat{u}(x)-\hat{u}(y))(\vp(x)-\vp(y))K^{\psi}_s(x,y)\, dxdy =\int_{\R^N} \hat{f} (x)\vp(x)\, dx,
$$
with $\hat{f}=1_{\R^N_+}\left( g- 1_{B_1} V\hat{u}\right)$. We note that by \eqref{eq:est-vL-infty},  \eqref{eq:-norm-g}  and \eqref{eq:norm-v},
$$
\|\hat{f}\|_{L^p (\R^N)}\leq C(N,s_0,r,p)  \left(  \|f\|_{L^p(\R^N)}+    \|u\|_{L^2(\O)}   \right) .
$$
The proof is thus complete in the Neumann case. Note that for the Dirichlet case, the only change in the above argument is to apply Corollary \ref{cor:C-gam-reg-Dir} in the place of Corollary \ref{eq:cor-reg} in order to get \eqref{eq:eta-L-infty}. 
\end{proof}
\subsection{Proof of the main results (completed)}
We are now in position to complete the proofs of the main results in the first section. For the following,  we assume for simplicity that $\|f\|_{L^p(\O)}+ \|u\|_{L^2(\O)} \leq 1$, $\|f\|_{L^p(\O)}+ \|v\|_{L^2(\O)} \leq 1$  and $\O$ is as in the beginning of Section \ref{s:poof-MR}. For $x\in \R^N$,   we let $\d(x)$ to  be $\textrm{dist}(x,\de\O)$ for $x$ in a tubular neighbourhood of $\de\O$ and   Lipschitz continuous and positive elsewhere.
\begin{proof}[Proof of Theorem \ref{th:amin1} (completed)]
The statement in the theorem follows from Lemma \ref{lem:flat-rpoblem} and Corollary \ref{eq:cor-reg}.
\end{proof}
\begin{proof}[Proof of Theorem \ref{th:amin2} (completed)]
We put $\g:=\min (2s-N/p,1+\b)$. Then in view of  Lemma \ref{lem:flat-rpoblem} and  Corollary  \ref{cor:near-grad-estim-Hold}, we can find two constants  $\ov r,  C>0$, depending only on $N,s,p,\O$ and $\b$ such that  for any $x\in B_{\ov r}\cap \O$, we have that $ |\n u (x)| + [\n u]_{C^{\g-1}(B_{\rho}(x))}\leq C$ and $\n u(q)\cdot \nu(q)=0$, with $\rho=\d(x)/4$ and $|x-q|=\d(x)$. Applying Lemma \ref{lem:Prop-of-RS}, after flattening the boundary, we get the stated results.
\end{proof}

 \begin{proof}[Proof of   Theorem \ref{th:amin1-Dir} (completed)]
The proof follows from Lemma \ref{lem:flat-rpoblem} and Corollary \ref{cor:C-gam-reg-Dir}.
 \end{proof}

 \begin{proof}[Proof of   Theorem \ref{th:amin3} (completed)]
In view of  Lemma \ref{lem:flat-rpoblem} and  Corollary \ref{cor:u-odr-ds}-$(i)$, we obtain  two  constants  $\ov r=\ov r(N,\O,\b)>0$ and $ C=C(N,s,p,\O,\b)>0$  such that  for any $x\in B_{\ov r}\cap \O$, we have that 
 $$
  | v (x)/\d^{2s-1}(x)| + [ v]_{C^{2s-1}(B_{\rho}(x))}+ [ v/\d^{2s-1}]_{C^{\min (1-N/p,\b)}(B_{\rho}(x))}\leq C
  $$
 with $\rho=\d(x)/4$. Applying Lemma \ref{lem:Prop-of-RS}, after flattening the boundary, we get $(i)$.
 Next, we let $ 2s-N/p >1$ and $\b>2s-1$. Then in this case we use  Corollary \ref{cor:u-odr-ds}-$(ii)$ to get
 $$
  | \d^{2s-2}(x) \n v(x)| +   [ \d^{2s-2}\n v]_{C^{2s-\frac{N}{p}-1} (B_{\rho}(x)) } \leq C
  $$
   and $\lim_{\d(x)\to 0}\left(\d(x)^{2-2s}\n v(x) \right)=(2s-1) [v/\d^{2s-1}](q) \nu(q)$, where $|x-q|=\d(x) $. We then apply Lemma \ref{lem:Prop-of-RS}, after flattening the boundary, to get $(ii)$.
 \end{proof}
 
\begin{remark}\label{rem:const-s}
We point out that provided    $2s_0-N/p>1$ and $\b<2s_0-1$,   the constant $C$ in  Theorem \ref{th:amin2} and Theorem \ref{th:amin3}-$(i)$ can be chosen to  remain bounded as $s\to 1$, while for Theorem \ref{th:amin3}-$(ii)$, we need to assume that $\b=1$.  Then the constant $C$ appearing in Theorem \ref{th:amin2} and Theorem \ref{th:amin3} remains bounded as $s\to 1$. This follows from Lemma \ref{lem:nonloc-to-loc} and the  blow up argument that is used in the proof of  Proposition \ref{prop:bound-Kato-abstract-HB1} and Proposition \ref{prop:bound-Kato-abstract-HB}. 
\end{remark} 
 \section{Appendix}\label{s:Append}
 This section is devoted to prove the convergence  of the nonlocal problem to the local problem as $s\to1$.
 \begin{lemma}\label{lem:nonloc-to-loc}
 \begin{enumerate}
 \item[(i)] Let $\O$ be an open subset of $\R^N$. Then for all $u,v\in C^1_c(\R^N)$, we have 
$$
 \lim_{s\to 1} c_{N,s}\int_{\O}\int_{\O}\frac{(u(x)-u(y))(v(x)-v(y))}{|x-y|^{N+2s}}\, dxdy=\g_N\int_{\O}\n u(x)\cdot \n v(x)\, dx .
$$

 \item[(ii)] Let $\psi_n\in C^{1}(\R^{N-1})$ be such that $[\n \psi_n]_{C^1(\R^{N-1})}\to 0 $ and $s_n\in (0,1)$ with $s_n\to 1$.  Let $w_n\in H^{s_n}_{loc}(\ov{\R^N_+})$ with  $w_n \to w\in L^2_{loc}(\ov{\R^N_+})$ and suppose that  there exists $C>0$ such that, for all $n\in\N$, 
$$
(1-s_n)[w_n]_{ H^{s_n}_{loc}(\R^N_+)}+ \int_{\R^N_+}\frac{|w_n(x)|}{1+|x|^{N+2s_n}}\, dx\leq C.
$$
Then, for all $\phi\in C^\infty_c(\R^N)$, we have 
$$
\lim_{n\to \infty}  \calD_{K^{\psi_n}_{s_n}}(w_n,\phi)=\frac{\g_N}{2}\int_{\R^N_+} \n w(x)\cdot \n \phi (x)  dx.
$$
 \end{enumerate}
Here,  $\g_N=\lim_{s\to 1} \frac{c_{N,s} |B_1|}{2(1-s)}$.
\end{lemma}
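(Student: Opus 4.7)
The plan is to establish both statements via the Bourgain--Brezis--Mironescu (BBM) philosophy: as $s\to 1$ the measure $c_{N,s}|x-y|^{-N-2s}\,dxdy$ concentrates on the diagonal with exactly the weight that reconstructs the Dirichlet quadratic form. Part $(i)$ is the classical localized BBM limit for smooth, compactly supported $u,v$, while part $(ii)$ is its sequential analogue in which the kernel is perturbed ($\psi_n\not\equiv 0$) and the convergence $w_n\to w$ holds only in $L^2_{loc}$.

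For part $(i)$, I would fix $\d>0$ small and split the integral according to whether $|x-y|<\d$ or $|x-y|\ge\d$. Since $c_{N,s}\sim \tfrac{2\g_N(1-s)}{|B_1|}$ as $s\to 1$, the tail piece is bounded by $C\,c_{N,s}\,\d^{-2s}\|u\|_\infty\|v\|_\infty|\supp u\cup\supp v|\to 0$ for each fixed $\d$. On the near-diagonal piece, the Taylor expansions $u(x)-u(y)=\n u(y)\cdot(x-y)+O(|x-y|^2)$ and similarly for $v$ decompose the product into a leading quadratic piece in $(x-y)$ plus remainders of order $|x-y|^3$; the remainders contribute at most $c_{N,s}\d^{3-2s}\|\n u\|_\infty\|\n v\|_\infty\to 0$. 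The leading piece equals
\[
c_{N,s}\int_\O\sum_{i,j}\partial_iu(y)\partial_jv(y)\int_{(\O-y)\cap B_\d}\frac{z_iz_j}{|z|^{N+2s}}\,dz\,dy.
\]
For $y$ with $\text{dist}(y,\de\O)>\d$, spherical symmetry gives the inner integral as $\d_{ij}\tfrac{|S^{N-1}|\d^{2-2s}}{N(2-2s)}=\d_{ij}\tfrac{|B_1|\d^{2-2s}}{2(1-s)}$; multiplying by $c_{N,s}$ produces the prefactor $\g_N\d^{2-2s}\to\g_N$. The complementary boundary strip has measure that, intersected with $\supp\n u\cup\supp\n v$, is $o(1)$ as $\d\to 0$ (using that $u,v\in C^1_c$ and the strip's contribution is controlled by $\|\n u\|_\infty\|\n v\|_\infty\,|\text{strip}|$ after the uniform prefactor bound). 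Sending first $s\to 1$ and then $\d\to 0$ yields $(i)$.

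For part $(ii)$, I would proceed in two steps. \textbf{Step A}: reduce to the translation-invariant kernel. By \eqref{eq:remainder-Kernel-flat0} one has $|K^{\psi_n}_{s_n}(x,y)-K^+_{s_n}(x,y)|\le C\|\n\psi_n\|_\infty K^+_{s_n}(x,y)$, so Cauchy--Schwarz yields $|\calD_{K^{\psi_n}_{s_n}}(w_n,\phi)-\calD_{K^+_{s_n}}(w_n,\phi)|\le C\|\n\psi_n\|_\infty(c_{N,s_n}[w_n]^2_{H^{s_n}(\O')})^{1/2}(c_{N,s_n}[\phi]^2_{H^{s_n}(\R^N)})^{1/2}$, where $\O'$ is a fixed neighbourhood of $\supp\phi$. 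The first factor is $O(1)$ by the standing BBM-type bound on $w_n$ and $c_{N,s_n}\sim(1-s_n)$; the second is $O(1)$ by part $(i)$ applied to $\phi\in C^\infty_c$. Since $\|\n\psi_n\|_\infty\to 0$, this difference vanishes. \textbf{Step B}: prove $\calD_{K^+_{s_n}}(w_n,\phi)\to\tfrac{\g_N}{2}\int_{\R^N_+}\n w\cdot\n\phi$. Splitting again at scale $\d$, the tail is handled by Cauchy--Schwarz together with the hypothesis $\int|w_n|(1+|x|)^{-N-2s_n}\le C$ (dealing with $x$ far from $\supp\phi$), each piece being $o(1)$. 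On $|x-y|<\d$, expand $\phi(x)-\phi(y)=\n\phi(\tfrac{x+y}{2})\cdot(x-y)+O(|x-y|^3)$ (midpoint Taylor, so the leading term is antisymmetric in $x,y$); the cubic remainder contributes $O((1-s_n)^{1/2})\to 0$ exactly as in $(i)$. One then identifies the limit of the principal term by a Ponce-type sequential BBM argument exploiting the strong $L^2_{loc}$ convergence $w_n\to w$ together with the uniform seminorm bound, producing the factor $\g_N$ and the gradient of the limit.

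The main obstacle is Step B, specifically the passage to the limit in the principal term when $w_n$ is merely $L^2_{loc}$-convergent (it is not a priori in $H^1$ nor even smooth). A clean route is to introduce a radial mollifier $\rho_\eta$ (with $\supp\rho_\eta\subset B_\eta$), replace $w_n$ by $w_n^\eta:=w_n\ast\rho_\eta$ on a neighbourhood of $\supp\phi$ via an interior cutoff, apply $(i)$ to the smooth pair $(w_n^\eta,\phi)$, and then pass first $n\to\infty$ (using $L^2_{loc}$ convergence of $w_n$ and Fubini) and finally $\eta\to 0$. The delicate point is a uniform-in-$n$ commutator estimate of the form $c_{N,s_n}\iint (w_n-w_n^\eta)(x)-(w_n-w_n^\eta)(y))\cdots|x-y|^{-N-2s_n}\to 0$ as $\eta\to 0$; this is possible precisely because $(1-s_n)[w_n]^2_{H^{s_n}_{loc}}$ is uniformly bounded, which by BBM interpolation controls the commutator at scale $\eta$ uniformly in $n$.
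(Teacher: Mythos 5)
Part (i) of your proposal is essentially the paper's own argument (split at a small scale, Taylor expand, compute the spherical average that produces $\frac{|B_1|\d^{2-2s}}{2(1-s)}$ and hence $\g_N$), and your Step~A of part (ii) — removing $\psi_n$ via the pointwise comparison \eqref{eq:remainder-Kernel-flat0} and Cauchy--Schwarz — is a legitimate variant of what the paper does by changing variables through $\Psi_n$. One small imprecision in (i): for $u,v\in C^1_c$ the remainder $O(|x-y|^2)$ is not available; you only get $\o(|x-y|)\,|x-y|$ with $\o$ the modulus of continuity of $\n u$, which is exactly the paper's $\e$--$r_\e$ device and still closes the argument. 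Also note that passing $n\to\infty$ in $\calD_{K^+_{s_n}}(w_n^\eta,\phi)$ requires the \emph{quantitative} form of (i) (error controlled by $[u]_{C^1}[v]_{C^1}$ uniformly in $s$), since both the function and the exponent vary; this is available but should be said.

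The genuine gap is the commutator estimate in Step~B. The hypothesis $(1-s_n)[w_n]^2_{H^{s_n}_{loc}}\le C$ gives, for each fixed $\s<1$, a uniform bound on $[w_n]_{H^\s_{loc}}$ and hence $\|w_n-w_n^\eta\|_{L^2}=O(\eta^\s)$; but it does \emph{not} imply that $(1-s_n)[\,w_n-w_n^\eta\,]^2_{H^{s_n}}\to0$ as $\eta\to0$ uniformly in $n$. Indeed, take $w_n=w+z_n$ with $\hat z_n$ supported near $|\xi|\sim R_n\to\infty$ and $\|z_n\|_{L^2}^2=c\,R_n^{-2s_n}$: then $w_n\to w$ in $L^2$, $(1-s_n)[w_n]^2_{H^{s_n}}\le C$, yet for every fixed $\eta$ one has $z_n^\eta\approx0$ once $R_n\gg\eta^{-1}$, so the rescaled energy of $w_n-w_n^\eta$ stays bounded below by $c$. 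Cauchy--Schwarz against $(1-s_n)[\phi]^2_{H^{s_n}}=O(1)$ then yields only $O(1)$, not $o(1)$. (The conclusion of the lemma still holds for such $w_n$, but only because the smallness must be extracted from the rapid decay of $\hat\phi$, i.e., from the test function, not from the seminorm of $w_n-w_n^\eta$; transposing the mollifier onto $\phi$ would do this on the whole space, but $K^+_{s_n}$ is not translation invariant across $\de\R^N_+$, so that transposition creates boundary errors you would still have to control.) The paper avoids mollification entirely: it first proves the limit for fixed smooth pairs, upgrades it to convergence of the bilinear forms on $H^1(\R^N_+)\times H^1(\R^N_+)$ by uniform boundedness plus density, and then writes $\calD(w_n,\phi)=\calD(\chi_M w,\phi)+\calD(\chi_M(w_n-w),\phi)+\textrm{tail}$, disposing of the middle term via the strong $H^\s_{loc}$ convergence $w_n\to w$ for $\s<1$ and of the tail via $c_{N,s_n}=O(1-s_n)\to0$ together with the weighted $L^1$ bound. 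You should either adopt that route or supply a genuinely different mechanism for the commutator; as written, the step fails.
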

\begin{proof}
 Let $u,v\in C^\infty_c(\R^N)$ and $R>0$ be  such that   Supp$u$,  Supp$v$ $\subset B_{R}.$ We estimate
\begin{align}\label{eq:firts-e-s-to1}
&c_{N,s}\int_{\O}\int_{\O}\frac{(u(x)-u(y))(v(x)-v(y))}{|x-y|^{N+2s}}\, dxdy\nonumber\\
&=c_{N,s} \int_{\O\cap B_{2R}}\int_{\O}\frac{(u(x)-u(y))(v(x)-v(y))}{|x-y|^{N+2s}}\, dxdy \nonumber\\
&+   c_{N,s}\int_{\O\setminus B_{2R}}\int_{\O}\frac{(u(x)-u(y))(v(x)-v(y))}{|x-y|^{N+2s}}\, dxdy \nonumber\\
& =c_{N,s} \int_{\O\cap B_{2R}}\int_{\O}\frac{(u(x)-u(y))(v(x)-v(y))}{|x-y|^{N+2s}}\, dxdy \nonumber\\
&+ c_{N,s}\int_{\O\cap B_{R}}u(y)v(y) dy\int_{\O\setminus B_{2R}} \frac{1}{|x-y|^{N+2s}}\, dx \nonumber\\
& =c_{N,s} \int_{\O\cap B_{2R}}\int_{\O}\frac{(u(x)-u(y))(v(x)-v(y))}{|x-y|^{N+2s}}\, dxdy+O(c_{N,s}) [u]_{C^1(\R^N)}[v]_{C^1(\R^N)}.
\end{align}
By Taylor expansion,  for all $\e<0$, there exits $r_\e>0$ such that for $y\in B(x, r_\e)$
$$
u(x)-u(y)=\n u(x)\cdot (x-y)+ O(\e |x-y|)[u]_{C^1(\R^N)}.
$$
To alleviate the notations, we assume that $[u]_{C^1(\R^N)}[v]_{C^1(\R^N)}\leq 1$. We then have  
\begin{align*}
&c_{N,s} \int_{\O\cap B_{2R}}dx\int_{\O}\frac{(u(x)-u(y))(v(x)-v(y))}{|x-y|^{N+2s}}\, dy\\
&=c_{N,s} \int_{\O\cap B_{2R}}dx\int_{|x-y|\leq r_\e}\frac{(u(x)-u(y))(v(x)-v(y))}{|x-y|^{N+2s}}\, dy\\
&+c_{N,s} \int_{\O\cap B_{2R}}dx\int_{|x-y|\geq r_\e}\frac{(u(x)-u(y))(v(x)-v(y))}{|x-y|^{N+2s}}\, dy\\
&=c_{N,s} \int_{\O\cap B_{2R}}dx\int_{|x-y|\leq {r_\e}}\frac{\n u(x)\cdot (x-y) \n v(x)\cdot (x-y) }{|x-y|^{N+2s}}\, dy\\
&+c_{N,s} \int_{\O\cap B_{2R}}dx\int_{|x-y|\leq {r_\e}}\frac{ O(\e |x-y|^2) }{|x-y|^{N+2s}}\, dy +c_{N,s} \int_{\O\cap B_{2R}}dx\int_{ |x-y|\geq r_\e}\frac{O(1) }{|x-y|^{N+2s}}\, dy.
\end{align*}
We then obtain
\begin{align*}
&c_{N,s} \int_{\O\cap B_{2R}}\int_{\O}\frac{(u(x)-u(y))(v(x)-v(y))}{|x-y|^{N+2s}}\, dxdy\\
&=c_{N,s} \int_{\O}\int_{S^{N-1}} \n u(x)\cdot\th \n v(x)\cdot\th\, d\th  \int_0^{r_\e} r^{1-2s}dr +O(\e\frac{c_{N,s}}{1-s})  +c_{N,s} O( r_\e^{-2s}) \\
&=\frac{c_{N,s} |B_1| r_\e^{2-2s}}{2(1-s)} \int_{\O } \n u(x)\cdot \n v(x)  dx +\left( O(\e\frac{c_{N,s}}{1-s}) +c_{N,s} O( r_\e^{-2s} ) \right)  .
\end{align*}
Using this in \eqref{eq:firts-e-s-to1}, we conclude that 
\begin{align}\label{eq:Dirrr-final}
c_{N,s}\int_{\O}\int_{\O}&\frac{(u(x)-u(y))(v(x)-v(y))}{|x-y|^{N+2s}}\, dxdy=\frac{c_{N,s} |B_1| r_\e^{2-2s}}{2(1-s)} \int_{\O } \n u(x)\cdot \n v(x)  dx \nonumber\\
&+\left(O(\e\frac{c_{N,s}}{1-s})  +c_{N,s} O( r_\e^{-2s} \right) [u]_{C^1(\R^N)}[v]_{C^1(\R^N)}.
\end{align}
Since $c_{N,s}=O(1-s)$, then letting $s\to 1$ and then $\e\to 0$, we get $(i)$.\\
We now prove $(ii)$. Recall that  $\Psi_n(x',x_N)=(x',x_N+\psi_n(x))$ (see Section \ref{s:Reg-Neum}).
We define  $\O_n:=\Psi_n({\R^N_+})$.
Let $\phi,\vp \in  C^\infty_c(\R^N)$ and define $u_n:=\phi\circ\Psi_n$ and $v_n:=\vp\circ\Psi_n$. By a change of variable, we get
\begin{align*}
\calD_{K^{\psi_n}_{s_n}}(\phi,\vp)=\frac{c_{N,s_n}}{2} \int_{\O_n}\int_{\O_n}\frac{(u_n(x)-u_n(y))(v_n(x)-v_n(y))}{|x-y|^{N+2s_n}}\, dxdy
\end{align*}
By \eqref{eq:Dirrr-final}, we have 
\begin{align*} 
c_{N,s_n}\int_{\O_n}\int_{\O_n}&\frac{(u_n(x)-u_n(y))(v_n(x)-v_n(y))}{|x-y|^{N+2s_n}}\, dxdy=\frac{c_{N,s_n} |B_1| r_\e^{2-2s_n}}{2(1-s_n)} \int_{\O_n } \n u_n(x)\cdot \n v_n(x)  dx \nonumber\\
&+\left(O(\e\frac{c_{N,s_n}}{1-s_n})  +c_{N,s_n} O( r_\e^{-2s_n} \right) [u_n]_{C^1(\R^N)}[v_n]_{C^1(\R^N)}.
\end{align*}
Using now that $[\n \psi_n]_{C^1(\R^{N-1})}\leq \frac{1}{n}$, the fact that $c_{N,s_n}=O(1-s_n)$ and   the dominated convergence theorem, we deduce that 
 \begin{align*}
 2\lim_{n\to \infty}\calD_{K^{\psi_n}_{s_n}}(\phi,\vp)=\g_N\int_{\R^N_+} \n \phi(x)\cdot \n \vp (x)  dx+ O(\e)  [\phi]_{C^1(\R^N)}[\vp]_{C^1(\R^N)}.
 \end{align*}
 Now letting $\e\to 0$, we get, for all $\phi,\vp \in  C^\infty_c(\R^N)$ ,  
 \be \label{eq:ETSa}
 2 \lim_{n\to \infty}\calD_{K^{\psi_n}_{s_n}}(\phi,\vp)= \g_N\int_{\R^N_+} \n \phi(x)\cdot \n \vp (x)  dx.
 \ee
In addition,  for $\phi,\vp \in H^1(\R^N_+)$, we have that 
 $$
 |\calD_{K^{\psi_n}_{s_n}}(\phi,\vp)|\leq C(N)\|\phi\|_{H^1(\R^N_+)} \|\vp\|_{H^1(\R^N_+)}.
 $$
 Therefore  the symmetric bilinear form $\calD_{K^{\psi_n}_{s_n}}$ form-converges  in $H^1(\R^N_+)\times H^1(\R^N_+)$ to a symmetric bilinear form $\calD_{\infty}: H^1(\R^N_+)\times H^1(\R^N_+)\to \R$. Namely,  for all $\phi,\vp\in H^1(\R^N_+)$, 
$$
\lim_{n\to \infty} \calD_{K^{\psi_n}_{s_n}}(\phi,\vp)= \calD_{\infty}(\phi,\vp).
$$ 
By density and \eqref{eq:ETSa}, we have that for all   $\phi,\vp\in H^1(\R^N_+)$,
\be \label{eq:ETSb}
2 \lim_{n\to \infty} \calD_{K^{\psi_n}_{s_n}}(\phi,\vp)=2\calD_{\infty}(\phi,\vp)=\g_N\int_{\R^N_+} \n \phi(x)\cdot \n \vp (x)  dx.
\ee
Fix $M\geq 1$ and consider $\chi_M\in C^\infty_c(B_{2M})$ such that $\chi_M=1$ on $B_M$. Then for $\phi\in C^\infty_c(B_{M}/2)$, we get
\begin{align}\label{eq:ETS0}
& \calD_{K^{\psi_n}_{s_n}}(\chi_M w_n,\phi)= \calD_{K^{\psi_n}_{s_n}}(\chi_M w,\phi)+  \calD_{K^{\psi_n}_{s_n}}(\chi_M(w_n-w),\phi).
\end{align}
Letting $v_n=\chi_M(w_n-w)$, we have 
\begin{align*}
&2\left|   \calD_{K^{\psi_n}_{s_n}}( v_n,\phi) \right|= c_{N,s_n}\left| \int_{\R^N_+\times \R^N_+}\frac{(v_n(x)-v_n(y))(\phi(x)-\phi(y))}{ |x-y|^{N+2s_n}}\, dxdy \right| \nonumber\\
&+   \int_{\R^N\times \R^N}{|(v_n(x)-v_n(y))(\phi(x)-\phi(y))|}\left|K^{\psi_n}_{s_n}(x,y)-K_{s_n}^+(x,y) \right|\, dxdy.
\end{align*}
Using this, the fact that $w_n \to w $ in $H^{\s}_{loc}(\ov{\R^N_+})$ for every $\s\in (0,1)$ and \eqref{eq:remainder-Kernel-flat}, we obtain 
\be \label{eq:ETS2}
\left|   \calD_{K^{\psi_n}_{s_n}}( \chi_M(w_n-w),\phi) \right|=o(1) \qquad\textrm{ as $n\to \infty$}.
\ee
 Now, by a direct computation, for $\phi\in C^\infty_c(B_{M/2})$, we also have that 
\begin{align*}
\calD_{K^{\psi_n}_{s_n}}(\chi_M w_n,\phi)=\calD_{K^{\psi_n}_{s_n}}(w_n,\phi)+c_{N,s_n}\int_{\R^N}G_n(x)\phi(x)\, dx,
\end{align*}
where $G_n(x)=1_{B_{M/2}}\int_{\{|y|\geq M\}\cap \R^N_+} |x-y|^{-N-2s_n}w_n(y)\, dy.$ Since $c_{N,s_n}=O(1-s_n)\to 0$ as $n\to \infty$, by the above identity, \eqref{eq:ETS0},  \eqref{eq:ETS2} and \eqref{eq:ETSb}, we deduce that 
$$
\lim_{n\to \infty}  \calD_{K^{\psi_n}_{s_n}}(w_n,\phi)=\frac{\g_N}{2}\int_{\R^N_+} \n w(x)\cdot \n \phi (x)  dx.
$$
\end{proof}

\end{document}